\def\today{\number\day\space\ifcase\month\or
  January\or February\or March\or April\or May\or June\or
  July\or August\or September\or October\or November\or December\fi
  \space\number\year}
\title[Paths to Understanding {\Large $\rho_{B}$}]{Paths to Understanding Birational
Rowmotion\\ on Products of Two Chains}
\author[G. Musiker]{Gregg Musiker}
\address{University of Minnesota\\ Minneapolis, MN}
\email{musiker@math.umn.edu}
\author[T. Roby]{Tom Roby}
\address{University of Connecticut\\Storrs, CT 06269-1009}
\email{tom.roby@uconn.edu}
\date{\today}
\newcolumntype{L}[1]{>{\raggedright\let\newline\\\arraybackslash\hspace{0pt}}p{#1}}
\newcommand{\II}{\mathbbm 1}
\newcommand{\IIb}{\hat{\mathbbm 1}}
\newcommand{\cred}[1]{{\color{red}#1}}
\newcommand{\cblu}[1]{{\color{blue}#1}}
\newcommand{\cgrn}[1]{{\color{OliveGreen}#1}}
\newcommand{\bigsig}{{\sum}}
\DeclareMathOperator*{\sumpar}{\bigsig^{\ \mathclap{\|}\mathclap{\rule[1.3pt]{6pt}{0.4pt}}}\:}
\def\parallelsum{\:\ \mathclap{\|}\mathclap{-}\ \:}
\def\parallel{\parallelsum} 
\newcommand*\ncirc[1]{\tikz[baseline=(char.base)]{
            \node[shape=circle,draw,inner sep=1.2pt] (char) {#1};}} 
\def\ncirc{\uline}
\definecolor{bananamania}{rgb}{0.98, 0.91, 0.71}
\definecolor{apricot}{rgb}{0.98, 0.81, 0.69}
\newcommand{\arxiv}[1]{\href{http://arxiv.org/abs/#1}{\texttt{arXiv:#1}}}
\def\lpreg{\hexagon}
\def\bc#1#2{\left(\kern -2pt{#1\atop #2} \kern -2pt\right)}
\def\bfit#1{{\textit{\textbf{#1}}}}
\def\bull{\noindent $\bullet$\kern 4pt}
\def\multiset#1#2{\ensuremath{\left(\kern-.3em\left(\genfrac{}{}{0pt}{}{#1}{#2}\right)\kern-.3em\right)}}
\newcommand{\ds}{\displaystyle}
\renewcommand{\hat}{\widehat}
\newtheorem{theorem}{Theorem}[section]
\newtheorem{claim}[theorem]{Claim}
\newtheorem{corollary}[theorem]{Corollary}
\newtheorem{definition}[theorem]{Definition}
\newtheorem{lemma}[theorem]{Lemma}
\theoremstyle{definition}
\newtheorem{eg}[theorem]{Example}
\newtheorem{defn}[theorem]{Definition}
\newtheorem{rem}[theorem]{Remark}
\def\m@th{\mathsurround\z@}
\def\cases#1{\left\{\,\vcenter{\normalbaselines\m@th
    \ialign{$##\hfil$&\quad##\hfil\crcr#1\crcr}}\right.}
\def\matrix#1{\null\,\vcenter{\normalbaselines\m@th
    \ialign{\hfil$##$\hfil&&\quad\hfil$##$\hfil\crcr
      \mathstrut\crcr\noalign{\kern-\baselineskip}
      #1\crcr\mathstrut\crcr\noalign{\kern-\baselineskip}}}\,}
\def\hang{\hangindent 24pt}
\def\d@nger{\medbreak\begingroup\clubpenalty=10000
  \def\par{\endgraf\endgroup\medbreak} %
  \noindent\hang\hangafter=-2
  \hbox to0pt{\hskip-\hangindent\dbend\hfill}}
\outer\def\danger{\d@nger}
\newcommand{\KK}{\mathbb K}
\newcommand{\NN}{\mathbb N}
\newcommand{\calB}{\mathcal{B}}
\newcommand{\calL}{\mathcal{L}}
\newcommand{\calO}{\mathcal{O}}
\newcommand{\calR}{\mathcal{R}}
\newcommand{\calS}{\mathcal{S}}
\newcommand{\frakL}{\mathfrak{L}}
\newcommand{\bfr}{r}
\newcommand{\bfs}{s}
\def\centerline#1{\hbox to \hsize{\hfill #1 \hfill}}
\newcommand{\whP}{{\widehat{P}}}
\newcommand{\rowmotion}{\rho}
\newcommand{\iscovered}{\lessdot}
\keywords{
birational rowmotion, 
dynamical algebraic combinatorics,
homomesy, 
periodicity, 
toggling. 
}
\begin{document}

\maketitle

\begin{abstract}

Birational rowmotion is an action on the space of assignments of rational functions to the elements of a finite partially-ordered set (poset). It is lifted from the well-studied rowmotion map on order ideals (equivariantly on antichains) of a poset  $P$, which when iterated on special posets, has unexpectedly nice properties in terms of periodicity, cyclic sieving, and homomesy (statistics whose averages over each orbit are constant). In this context, rowmotion appears to be related to Auslander-Reiten translation on certain quivers, and birational rowmotion to $Y$-systems of type $A_m \times A_n$ described in Zamolodchikov periodicity.

We give a formula in terms of families of non-intersecting lattice paths for iterated actions of the birational rowmotion map on a product of two chains. This allows us to give a much simpler direct proof of the key fact that the period of this map on a product of chains of lengths $r$ and $s$ is $r+s+2$ (first proved by D.~Grinberg and the second author), as well as the first proof of the birational analogue of homomesy along files for such posets.

\end{abstract}

\section{Introduction}\label{sec:intro} 

The \textit{rowmotion} map $\rho$, defined on the set $J(P)$ of order ideals (equivariantly
on antichains) of a poset $P$, has been thoroughly studied by a number of combinatorialists
and representation theorists.  When iterated on special posets, particularly root posets and
(co)minuscule posets associated with representations of finite-dimensional Lie algebras,
$\rho$ has unexpected nice properties in terms of periodicity, cyclic sieving, and
homomesy~\cite{AST11,brouwer-schrijver, CF95,Pan09,PR13,RuSh12,RuWa15+,SW12,ThWi17}.
Excellent summaries of the history of this map and further references are available in the
introductions of Striker-Williams~\cite{SW12} and Thomas-Williams~\cite{ThWi17}.

Armstrong, Stump, and Thomas~\cite{AST11} proved a conjecture of
Panyushev~\cite[Conj. 2.1(iii)]{Pan09} that under the action of rowmotion on antichains of
root posets, for any orbit $\calO$, the value $\frac{1}{\#\calO}\sum_{A\in \calO} \#A$ is a
constant, independent of the choice of $\calO$.  This was one of the first explicit
statements of a type later isolated by Propp and the second author as the much more
widespread \textbf{homomesy phenomenon}~\cite{PR13} (see Definition~\ref{def:homomesy}).
(In particular, the antichain-cardinality is homomesic with respect to rowmotion on
antichains of root posets.)   Propp and Roby's result that cardinality is a homomesic
statistic for rowmotion acting on $J([a]\times [b])$ was generalized by Rush and Wang to
all minuscule posets~\cite{RuWa15+}.  In the cominuscule (equivalently minuscule) context, there is
a still mysterious connection between rowmotion on 
certain posets and Auslander-Reiten translation on related quivers~\cite{Yil17}.

Considering an order ideal $I\in J(P)$ as an order-preserving map $I:P\rightarrow \{0,1 \}$
leads naturally to a generalization of $\rho$ to a piecewise-linear action $\rho_{PL}$ on
the \textit{order polytope}~\cite{Stan86} of $P$, i.e., $\{f:P\rightarrow [0,1]: f 
\text{ is order preserving}\}$.   This is then detropicalized (a.k.a.\ ``geometricized'') to
a birational map $\rho_B$, as detailed in~\cite{EiPr13,EiPr14}, following in the footsteps
of Kirillov and Berenstein~\cite{KiBe95}.  A key aspect of this is the insight
of Cameron and Fon-der-Flaass that $\rho$ can be decomposed as a product of \emph{toggles},
i.e., involutions defined for each element of $P$; thus, to generalize $\rho$ to other maps, it suffices
to generalize the definition of toggles.  None of this background is logically necessary for the current
paper, but it serves as motivation for why the birational rowmotion map $\rho_{B}$ is of interest.  

At the birational level, $\rho_{B}$ is a map on the set of assignments of rational functions
to the elements of the poset (with some genericity assumptions or domain restrictions to avoid
dividing by zero).  Theorems proven at the birational
level generally imply their corresponding theorems at the piecewise-linear level, and then at the
combinatorial level, but not vice-versa.  For example, the only proof available as of this
writing to show that piecewise-linear rowmotion is periodic uses the corresponding result for
birational rowmotion (Corollary~\ref{cor:periodicity}).

Periodicity of birational rowmotion was proved by Grinberg and Roby for a number of special
classes of posets, including for \emph{skeletal posets} (a generalization of graded
forests)~\cite{GrRo16,GRarX} and for triangles and rectangles~\cite{GrRo15,GRarX}, with the
latter being the fundamental and most challenging case.  
In this paper we give a formula in terms of families of non-intersecting lattice paths for iterated
actions of the \textit{birational rowmotion} map $\rho_{B}$ on a product of two chains.
This allows us to give a direct and significantly simpler proof that $\rho_{B}$ is periodic,
with the same period as ordinary (combinatorial) rowmotion
(Corollary~\ref{cor:periodicity}).   In this context, the \emph{homomesy phenomenon}
manifests 
itself as ``constant products over orbits'' since arithmetic means get replaced with
geometric means in the detropicalization process by which $\rho_{B}$ is defined.  
We apply our formula to prove two fundamental instances of homomesy for birational rowmotion on a
product of two chains: \emph{reciprocity} (Corollary~\ref{cor:reciprocity}) and \emph{file
homomesy} (Theorem~\ref{thm:homomesy}).  It
is expected that for the product of two chains, all ``natural'' homomesies for birational rowmotion can
be constructed as multiplicative combinations of these two~\cite[\S 10--11]{EiPr13}, in parallel
with the situation for the action of ordinary (combinatorial) rowmotion~\cite[\S4.1]{PR13}.

Many proofs of periodicity or homomesy in dynamical algebraic combinatorics involve
finding an equivariant bijection between rowmotion and an action that is easier to
understand, or at least already better understood.  At the combinatorial level, rowmotion
can be equivariantly and bijectively mapped, via the Stanley-Thomas
word, to bitstrings under cyclic rotation~\cite[\S 7]{PR13}.  For birational rowmotion, Grinberg and Roby
parameterize poset labelings by ratios of determinants, and then show periodicity 
and reciprocity via certain Pl\"ucker relations (overcoming a number of technical hurdles)~\cite{GrRo15}.  
By contrast, the methods of this paper involve working directly from our path formula, yielding
more explicit direct proofs of periodicity (Corollary~\ref{cor:periodicity}) and the reciprocity homomesy
(Corollary~\ref{cor:reciprocity}).  Additionally, our  
methods yield the first proof of a birational homomesy result along \emph{files} of our poset, namely that  
the product over all iterates  
of birational rowmotion over all elements of a given file is equal to $1$
(Theorem~\ref{thm:homomesy}).  This was first stated in Einstein-Propp~\cite[Thm.~9 and
remarks below Cor.~7]{EiPr13}, with some ideas of how one might construct a possible (more
indirect) proof.

The paper is organized as follows.  In Section~\ref{sec:defsMain}  we give basic
definitions, state our main 
result (Theorem~\ref{thm:main}, the lattice path formula for iterating birational
rowmotion), and present an extended illustrative example.  
We then state the main 
applications of our formula (periodicity and homomesy of $\rho_{B}$), deferring complicated arguments to the end of
Section~\ref{sec:proofs}.

In Section~\ref{sec:proofs} 
we prove our formula by way of some colorful combinatorial bijections for pairs of families
of non-intersecting lattice paths. Similar bijections have appeared earlier in the
literature, notably the paper of Fulmek and Kleber~\cite{FuKl01}.  (We are grateful to
Christian Krattenthaler for pointing us to their work.)  This section ends with a
proof of file homomesy,  Theorem~\ref{thm:homomesy}, using the aforementioned lattice path
formula, a telescoping sequence of cancellations, and an equality proven via a
double-counting argument. 
In Section~4 we conclude with connections to other work and directions for further research.

\subsection{Acknowledgements}\label{ss:acknowl}

The initial impetus for this paper came when the authors were both at a workshop on
``Dynamical Algebraic Combinatorics'' hosted by the American Institute of Mathematics (AIM)
in March~2015. We are grateful for AIM's hospitality that week, as well as for helpful
conversations with Max Glick, Darij Grinberg, Christian Krattenthaler, James Propp, Pasha
Pylyavskyy, and Richard Stanley.  We especially thank Grinberg for his careful reading of our first draft.  
Computations of birational rowmotion in SageMath~\cite{sage} were of enormous assistance in helping us discover the main
formula.  The second author thanks the the Erwin Schr\"odinger Institute (Vienna) for its
hospitality during a workshop on \emph{Enumerative and Algorithmic Combinatorics}, at which
these results were presented.

\section{Definitions and main result}\label{sec:defsMain}

\subsection{Definition of birational rowmotion} \label{ss:def}

Birational rowmotion can be defined for any
labeling of the elements of a finite poset by elements of a field.  The original motivation for
considering this came from the work of Einstein and Propp~\cite{EiPr13, EiPr14} (following
work of Kirillov-Berenstein~\cite{KiBe95}), which explained
how to lift the notion of \textbf{toggles}:\ first from the combinatorial setting to the
piecewise-linear setting, and second from the piecewise-linear setting to the birational
setting via ``detropicalization''.  This allowed them to define piecewise-linear and
birational analogues of rowmotion, which they wished to study from the standpoint of
homomesy, whose traditional definition requires a periodic action.  So they were eager to
have a proof of periodicity, which was first supplied in~\cite{GrRo15}.  Another exposition
of these ideas and further background can be found 
in~\cite[\S4]{Rob16}.  For basic information and notation about posets, we direct the reader
to~\cite[Ch. 3]{Stan11}.  

\begin{definition}\label{def:bitoggle}
Let $P$ be any finite poset, and let $\whP$ be $P$ with an additional global maximum (denoted
$\hat 1 $) and an additional global minimum (denoted $\hat 0 $) adjoined.  
Let $\KK$ be any field, and $f\in \KK^{\whP}$ be any labeling of the elements of $\whP$ by
elements of $\KK$.  We define the \textbf{birational toggle} 
$T_{v}:\mathbb{K}^{\widehat{P}}\dashrightarrow\mathbb{K}^{\widehat{P}}$
at $v\in P$ by
\begin{equation}
\left(  T_{v}f\right)  \left(  y\right)  =\left\{
\begin{array}{cc}
f\left(  y\right)  ,\ \ \ \ \ \ \ \ \ \ & \text{if }y\neq v;\\[8pt]
\dfrac{1}{f\left(  v\right)  }\cdot\dfrac{\sum\limits_{\substack{w\in
\widehat{P};\\w\lessdot v}}f\left(  w\right)  }{\sum\limits_{\substack{z\in
\widehat{P};\\z\gtrdot v}}\dfrac{1}{f\left(  z\right)  }}
,\ \ \ \ \ \ \ \ \ \ & \text{if }y=v
\nonumber
\end{array}
\right.
\end{equation}
for all $y \in \whP$.
Note that this rational map $T_{v}$ is
well-defined, because the right-hand side is
well-defined on a Zariski-dense open subset of $\mathbb{K}^{\widehat{P}}$.
Finally, we define \textbf{birational rowmotion} by $\rowmotion_{B}: = T_{v_{1}}T_{v_{2}}\dots
T_{v_{n}}: \KK^{\whP}\dashrightarrow \KK^{\whP}$, where $v_{1},v_{2},\dotsc ,v_{n}$ is any
linear extension of $P$. (``Toggling at each element of $P$ from top to bottom.'')   
\end{definition}
The toggle map $T_{v}$ changes only the label of the poset at $v$, and does this by 
(a) \emph{inverting} the label at $v$, and (b) multiplying by the \emph{sum} of the labels
at vertices \emph{covered by} $v$, and (c) multiplying by the \emph{parallel sum} of
the labels at vertices \emph{covering} $v$. 
It is lifted from a piecewise-linear toggle given by 
\[
f \mapsto  \left\{ \begin{array}{ll}
f(y) & \mbox{if $y \neq v$}, \\
\min_{z\,\cdot> v} f(z) + \max_{w <\cdot\,v} f(w) - f(v) & \mbox{if $y = v$},
\end{array} \right.
\]
Using the relation $\min (z_{i})=-\max (-z_{i})$, lifting $\max$ to $+$ forces us to lift
$\min$ to 
the (associative) parallel sum operation $\parallel$, defined by
$a\parallel b : = \frac{1}{\frac{1}{a}+\frac{1}{b}}$ (when $a,b\neq 0$ and $a\neq -b$).    

The main result of our paper is a formula in terms of families of non-intersecting lattice
paths for the $k$th iteration, $\rho_{B}^{k}$, of birational rowmotion on the product of two
chains. Accordingly, we will henceforth let $P$ denote this specific poset, i.e., the product of two chains. 
For our purposes, it is more convenient to coordinatize our poset $P$ as
$[0,r]\times [0,s]$ (where $[0,n] = \{0,1,2,\dots, n\}$), with minimal element $(0,0)$,
maximal element $(r,s)$ and covering relations:  
$(i,j) \lessdot (k,\ell)$ if and only if (1) $i=k$ and $\ell  = j +1$ or (2) $j=\ell$ and $k
= i+1$. The poset $P$ is clearly a \emph{graded} poset, where the \emph{rank} of $(i,j)$ is $i+j$.  
Orthogonally, for $k$ fixed, we call $F:=\{(i,j)\in P: j-i=k \}$ the $k$th 
\emph{file} of $P$.  

We then initially assign the generic label $x_{ij}$ (a.k.a.\ $x_{i,j}$) to the element $(i,j)$, and the
label 1 to the elements $\hat{0}$ and $\hat{1}$.  No essential generality is lost by
assigning 1 to the elements of $\whP -P$ (a ``reduced labeling'')~\cite[\S 4]{GrRo15} or
\cite[\S 4]{EiPr13}, but it simplifies our formulae and figures, which will generally just
display the labelings of $P$ itself, not of $\whP$.

\begin{eg}\label{eg:Hasse}
The Hasse diagram of $P = [0,2]\times [0,3]$ is shown on the left, with file $F=\{(2,3),
(1,2), (0,1) \}$ highlighted in \cred{red}.  The generic initial
labeling $f$ of $\whP$ is shown on the right.  

\[
\xymatrixrowsep{0.8pc}\xymatrixcolsep{0.18pc}\xymatrix{
&&& & \cred{(2,3)} \ar@{-}[rd] \ar@{-}[ld] & \\
&&& (2,2) \ar@{-}[rd] \ar@{-}[ld] & & (1,3) \ar@{-}[ld] \ar@{-}[rd]\\
P=&&(2,1) \ar@{-}[rd] \ar@{-}[ld] & & \cred{(1,2)} \ar@{-}[ld] \ar@{-}[rd] && (0,3) \ar@{-}[ld] & \\
&(2,0)  \ar@{-}[rd] && (1,1) \ar@{-}[rd] \ar@{-}[ld] & & (0,2) \ar@{-}[ld] && && && f=\\ 
&& (1,0)  \ar@{-}[rd] && \cred{(0,1)} \ar@{-}[ld] & &\\
&&& (0,0)
}
\xymatrixrowsep{0.8pc}\xymatrixcolsep{0.18pc}\xymatrix{
&&& 1 \ar@{-}[d] \\ 
&& & x_{23} \ar@{-}[rd] \ar@{-}[ld] & \\
&& x_{22} \ar@{-}[rd] \ar@{-}[ld] & & x_{13} \ar@{-}[ld] \ar@{-}[rd]\\
&x_{21} \ar@{-}[rd] \ar@{-}[ld] & & x_{12} \ar@{-}[ld] \ar@{-}[rd] && x_{03} \ar@{-}[ld] & \\
x_{20}  \ar@{-}[rd] && x_{11} \ar@{-}[rd] \ar@{-}[ld] & & x_{02} \ar@{-}[ld] && \\ 
& x_{10}  \ar@{-}[rd] && x_{01} \ar@{-}[ld] & &\\
&& x_{00} \ar@{-}[d]\\ 
&& 1
}
\]
\end{eg}

\begin{eg}\label{eg:2x2toggle}
Consider the $4$-element poset $P:= \left[  0,1\right]  \times\left[
0,1\right]  $, i.e., the product of two chains of length one, with the subscript-avoiding
labeling shown below.  
Then $f$ and the output of toggling $f$ at the top element $(1,1)$ of $P$ are
as follows.
\[
\xymatrixrowsep{0.9pc}\xymatrixcolsep{0.20pc}\xymatrix{
& &1\ar@{-}[d] & \\
& & z \ar@{-}[rd] \ar@{-}[ld] & \\
f = \ & x \ar@{-}[rd] & & y \ar@{-}[ld] \\
& & w \ar@{-}[d] \\
& & 1 & 
} 
\qquad  \qquad \raisebox{-55pt}{$\rightsquigarrow$} \qquad \qquad 
\xymatrixrowsep{0.9pc}\xymatrixcolsep{0.20pc}\xymatrix{
& &1\ar@{-}[d] & \\
& & {\color{red} \frac{\left(x+y\right)}{z}} \ar@{-}[rd] \ar@{-}[ld] & \\
T_{\left(  1,1\right)  } f = \ & x \ar@{-}[rd] & & y \ar@{-}[ld] \\
& & w \ar@{-}[d] & \\
& & 1 &
}
\]
Since the labels at $\hat{0}$ and $\hat{1}$ never vary, we suppress displaying them in all future examples
of birational rowmotion. (They are still involved in the computations.)   
Computing successively $T_{\left(  0,1\right)
}T_{\left(  1,1\right)  }f$, then
$T_{\left(  1,0\right)  }T_{\left(  0,1\right)  }T_{\left(  1,1\right)
}f$, and finally $\rho_{B}f=T_{\left(  0,0\right)  }T_{\left(  1,0\right)  }T_{\left(  0,1\right)
}T_{\left(  1,1\right)  }f$ gives: 
\[
\xymatrixrowsep{0.9pc}\xymatrixcolsep{0.20pc}\xymatrix{
& & \frac{\left(x+y\right)}{z} \ar@{-}[rd] \ar@{-}[ld] & \\
\ & x \ar@{-}[rd] & & {\color{red} \frac{\left(x+y\right)w}{yz}} \ar@{-}[ld] \\
& & w  & ,\\
}
\quad 
\xymatrixrowsep{0.9pc}\xymatrixcolsep{0.20pc}\xymatrix{
& & \frac{\left(x+y\right)}{z} \ar@{-}[rd] \ar@{-}[ld] & \\
& {\color{red} \frac{\left(x+y\right)w}{xz}} \ar@{-}[rd] & & \frac{\left(x+y\right)w}{yz} \ar@{-}[ld] \quad \text{ and}\\
& & w  & ,\\
}\quad 
\xymatrixrowsep{0.9pc}\xymatrixcolsep{0.20pc}\xymatrix{
& & \frac{\left(x+y\right)}{z} \ar@{-}[rd] \ar@{-}[ld] & \\
& \frac{\left(x+y\right)w}{xz} \ar@{-}[rd] & & \frac{\left(x+y\right)w}{yz} ~~~~~. \ar@{-}[ld] \\
& & {\color{red} \frac{1}{z}}  & \\
}
\]
\end{eg}

\begin{eg}\label{eg:2x2}
By repeating this
procedure (or just substituting the labels of $\rho_{B}f$ obtained as variables), we
can compute the iterated maps $\rho_{B}^{2}f$, $\rho_{B}^{3}f, \dots $ obtaining
\begin{align*}
&
\xymatrixrowsep{0.9pc}\xymatrixcolsep{0.20pc}\xymatrix{& & \frac{\left(x+y\right)}{z} \ar@{-}[rd] \ar@{-}[ld] & \\ 
 \rho_Bf = \ & \frac{\left(x+y\right)w}{xz} \ar@{-}[rd] & & \frac{\left(x+y\right)w}{yz} \ar@{-}[ld] \\ & & \frac{1}{z}  & ,}\ \ \ \ \ \ \ \ \ \ 
\xymatrixrowsep{0.9pc}\xymatrixcolsep{0.20pc}
\xymatrix{ & & \frac{\left(x+y\right)w}{xy} \ar@{-}[rd] \ar@{-}[ld] & \\   
\rho_{B}^2 f = \ & \frac{1}{y} \ar@{-}[rd] & & \frac{1}{x} \ar@{-}[ld] \\ & & \frac{z}{x+y}  & ,}\\
& \\
&
\xymatrixrowsep{0.9pc}\xymatrixcolsep{0.20pc}\xymatrix{ & &
\frac{1}{w} \ar@{-}[rd] \ar@{-}[ld] & \\ \rho_{B}^3 f = \ & \frac{yz}{\left(x+y\right)w}
\ar@{-}[rd] & & \frac{xz}{\left(x+y\right)w} \ar@{-}[ld] \\ & &
\frac{xy}{\left(x+y\right)w}& ,}\ \ \ \ \ \ \ \ \ \
\xymatrixrowsep{0.9pc}\xymatrixcolsep{0.20pc}\xymatrix{ & & z
\ar@{-}[rd] \ar@{-}[ld] & \\ \rho_{B}^4f = \ & x \ar@{-}[rd] & & y \ar@{-}[ld] \\ & & w  & .}
\end{align*}
Even this small example illustrates several interesting properties of this action.  
Notice that $\rho_B^{4}f=f$, which generalizes to $\rho_{B}^{r+s+2}f=f$ for $P=[0,r]\times
[0,s]$ (Corollary~\ref{cor:periodicity}). 
More subtly, as one iterates $\rho_{B}$, the labels at certain poset elements are reciprocals
of others occuring earlier at the antipodal position in the poset $P$.  For example\footnote{To avoid notation with double parentheses, we write $f(a,b)$ for $f(v)$ whenever $v=(a,b)$ in the following.},   
$\left(  \rho_Bf\right)  \left(  0,0  \right)  = 1/f(1,1)$,
$\left(  \rho_B^{2}f\right)  \left(  0,1  \right) = 1/f(1,0) $, $\left(
\rho_B^{2}f\right)  \left( 1,0 \right) = 1/f(0,1)  $, $\left(
\rho_B^{3}f\right)  \left( 1,1 \right) = 1/f(0,0)  $, and these induce further 
relations such as 
$\left(  \rho_B^{2}f\right)  \left(  0,0  \right) = 1/\left(  \rho_Bf\right)  \left( 1,1  \right)$.
This \textquotedblleft reciprocity\textquotedblright\ phenomenon turns out to
generalize to arbitrary rectangular posets (Corollary~\ref{cor:reciprocity}). 
\end{eg}

\begin{eg}\label{eg:2x2files}
We also note that the poset $P=[0,1]\times [0,1]$ has three files, namely $\{(1,0)\}$, $\{(0,0),(1,1)\}$, and $\{(0,1)\}$.
We observe the following identities, one per file, as we multiply over all iterates of
birational rowmotion the values of all the elements in a given file:
\[
\prod_{k=1}^{4}\rho_B^k(f)(1,0) = \frac{(x+y)w}{xz}~~ \frac{1}{y}~~ \frac{yz}{(x+y)w}~~ (x) = 1,
\]
\[
\prod_{k=1}^{4}\rho_B^k(f)(0,0)\rho_B^k(f)(1,1) =
\frac{1}{z} ~~ \frac{x+y}{z} ~~  \frac{z}{x+y}  ~~ \frac{(x+y)w}{xy}  ~~ \frac{xy}{(x+y)w}  ~~ \frac{1}{w} ~~  (w) ~~ (z) = 1,
\]
\[
\prod_{k=1}^{4}\rho_B^k(f)(0,1) = \frac{(x+y)w}{xz}~~ \frac{1}{y}~~ \frac{yz}{(x+y)w}~~ (x) = 1. 
\]
The fact that each of these products equals $1$ is the manifestation of homomesy along files
(of the poset of a product of two chains) at the birational level (Theorem~\ref{thm:homomesy}).

\end{eg}

\subsection{Our Main Result: A lattice path formula for birational rowmotion}

Here we state our main result, Theorem~\ref{thm:main}.  It gives a formula for any iteration
of $\rho_{B}$ on a product of two chains, as a 
ratio of polynomials in $A$-variables (simple fractions of the $x_{ij}$'s), where each
monomial corresponds to a family of non-intersecting lattice paths (NILPs). 
As a corollary, we give simpler
and more direct proofs that the period of this map on a product of chains of lengths
$r$ and $s$ is $r+s+2$ and that it satisfies the homomesies on display in the previous
examples.  

A simple change of variables in the initial labeling greatly facilitates our ability to write
the formula.  Let 
\begin{equation}\label{eq:Aij}
A_{ij} := \frac{\sum_{z\lessdot (i,j)} x_{z}}{x_{ij}}=\frac{x_{i,j-1} + x_{i-1,j}}{x_{ij}}, 
\end{equation}
where in particular $A_{i0} = \frac{x_{i-1,0}}{x_{i,0}}$, $A_{0j} = \frac{x_{0,j-1}}{x_{0,j}}$ 
and $A_{00} = \frac{1}{x_{00}}$ (working in
$\whP$). 

We define a \textbf{lattice path of length $\ell$} within $P=[0,r]\times [0,s]$ to be
a sequence $v_{1}, v_{2}, \dotsc , v_{\ell}$ of elements of $P$ such that each difference of
successive elements, $v_{i}-v_{i-1}$, is either $(1,0)$ or $(0,1)$ for each $2\leq i \leq \ell$.  We call
a collection of lattice paths \textbf{non-intersecting} if no two of them share a common
vertex.  We will frequently abbreviate \textbf{non-intersecting lattice paths} as
\textbf{NILPs}. 

\begin{defn}\label{def:phi}
Given a triple $(k,m,n)\in \NN^{3}$ (where $\NN$ denotes the nonnegative integers
$\{0,1,2,\dots \}$) with $k\leq \min \{r-m, s-n \}+1$, we define 
a polynomial $\bm{\varphi_k(m,n)}$ in terms of the $A_{ij}$'s as follows: 

1) Let $\bigvee_{(m,n)} :=\{(u,v): (u,v)\geq (m,n)\}$ be the \emph{principal order
filter at $(m,n)$} in $P$, which is isomorphic to $[0,r-m]\times [0,s-n]$.  Set
$\lpreg_{(m,n)}^{k} : = \{(u,v)\in \bigvee_{(m,n)}: 
m+n+k-1\leq u+v\leq r+s-k+1 \}$, the \emph{rank-selected
subposet} consisting of all elements in $\bigvee_{(m,n)}$ whose rank (within
$\bigvee_{(m,n)}$) is at least $(k-1)$ and whose corank is at least $(k-1)$.  

2) More specifically, let $s_1, s_2, \dots, s_k$ be the $k$ minimal elements and
$t_1, t_2, \dots, t_k$ be the $k$ maximal elements of $\lpreg_{(m,n)}^{k}$,
i.e.,  $s_{\ell } = (m+k-\ell ,n+\ell-1 )$ and $t_{\ell } = \left(r-\ell +1,s-k+\ell
\right)$ for $\ell  \in [k]$.  (When $k=0$, there are no 
$s_{\ell}$'s or $t_{\ell}$'s.)  Our condition that $k\leq \min \{r-m, s-n \}+1$ insures that
these points all lie within $\lpreg_{(m,n)}^{k}$.  

3) Let $S_k(m,n)$ be the set of families of NILPs in $\lpreg_{(m,n)}^{k}$ 
from $\{s_1,s_2,\dots, s_k\}$ to $\{t_1,t_2,\dots, t_k\}$.  We let $\calL = \{L_1,L_2,\dots, 
L_k \}\in S_{k}(m,n)$ denote such a family.

4) Define 
\begin{equation}\label{eq:phi}
\varphi_k(m,n) : = \sum_{ \calL  \in S_{k}(m,n)}
\hspace{2em}\prod_{\stackrel{(i,j) \in \lpreg_{(m,n)}^{k}}{(i,j) \not \in L_1 \cup L_2 \cup \dots \cup
L_k}} A_{ij}. 
\end{equation}

5) Finally, set $[\alpha ]_{+} : = \max \{ \alpha ,0 \}$ and let $\mu^{(a,b)}$ be the transformation
that takes a 
rational function in $\{A_{u,v}\}$ and simply shifts each index in 
each factor of each term: $A_{u,v}\mapsto A_{u-a,v-b}$.
\end{defn}

We are now ready to state our main result.

\begin{theorem}\label{thm:main} Fix $k\in [0, r+s+1]$, and let
$\rho_B^{k+1}(i,j)$ denote the rational function in $\KK[x_{u,v}]$ associated to the poset element $(i,j)$
after $(k+1)$ applications of the birational rowmotion map to the generic initial labeling
of $P = [0,r] \times [0,s]$.  
Set $M = [k-i]_+ + [k-j]_+$.  We obtain the following formula for $\rho_B^{k+1}(i,j)$: 

\bfit{(a)}  When $M\leq k$:

\begin{equation}\label{eq:main1}
\rho_B^{k+1}(i,j) = \mu^{([k-j]_+,
[k-i]_+)}\left(\frac{\varphi_{k-M}(i-k+M,j-k+M)}{\varphi_{k-M+1}(i-k+M,j-k+M)}\right)
\end{equation}

where $\varphi_t(v,w)$ and $\mu^{(a,b)}$ are as defined in 4) and 5) of Definition~\ref{def:phi}.  

\bfit{(b)} When $M \geq k$: 
\[
\rho_{B}^{k+1}(i,j) = 1/\rho_{B}^{k-i-j}(r-i, s-j)
\]
where $\rho_{B}^{k-i-j}(r-i,s-j)$ is well-defined by part (a). 
\end{theorem}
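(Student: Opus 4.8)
The plan is to prove the lattice-path formula of Theorem~\ref{thm:main} by induction on $k$, the number of iterations, using the toggle-by-toggle definition of $\rho_B$ together with a combinatorial identity on the polynomials $\varphi_k(m,n)$. The base case $k=0$ records the fact that $\rho_B^1(i,j)$ can be written, after the change of variables \eqref{eq:Aij}, as the stated ratio (here $M=0$, both $\varphi$'s involve either $k=0$ or $k=1$ paths, and the formula should reduce to a direct computation of one application of birational rowmotion on $P$ with a reduced labeling). For the inductive step, I would compute $\rho_B^{k+2}(i,j) = (T_{(i,j)}T_{(i',j')}\cdots)\bigl(\rho_B^{k+1}(\cdot)\bigr)$ by peeling off the toggles in a linear extension from the top; the key point is that when one toggles at a vertex $v=(i,j)$, the new label at $v$ depends only on the (already updated) labels covering $v$ and the (not-yet-updated) labels covered by $v$. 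Using the definition of $T_v$, this says
\[
\rho_B^{k+2}(i,j) = \frac{1}{\rho_B^{k+1}(i,j)}\cdot\frac{\rho_B^{k+2}(i-1,j) + \rho_B^{k+2}(i,j-1)}{\dfrac{1}{\rho_B^{k+1}(i+1,j)} + \dfrac{1}{\rho_B^{k+1}(i,j+1)}},
\]
with the obvious conventions at the boundary (labels $1$ for $\hat 0,\hat 1$). Substituting the inductive formula for each term on the right-hand side reduces everything to a single identity among the $\varphi$'s, which I would isolate as a lemma: a Plücker-type (or Desnanot–Jacobi-type) relation expressing $\varphi_{k+1}(m,n)\,\varphi_{k-1}(m,n)$, or a shifted combination thereof, in terms of products $\varphi_k(\cdot)\varphi_k(\cdot)$ of neighboring $\varphi$'s.

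The combinatorial heart is that Plücker-type identity, and I would prove it bijectively rather than algebraically: a family of NILPs counted by $\varphi_{k}$ times another counted by $\varphi_{k}$ determines, after a path-switching (Lindström–Gessel–Viennot-style) argument at the first point two superimposed paths meet, a pair of NILP families counted by $\varphi_{k+1}\varphi_{k-1}$, and vice versa, with the weights $\prod A_{ij}$ over the complement of the paths matching up because the union of the two path systems is preserved by the switch. The shift operators $\mu^{(a,b)}$ and the truncations $[k-i]_+$ just bookkeep which sub-rectangle $\bigvee_{(m,n)}$ and which rank-window $\lpreg_{(m,n)}^k$ the paths live in, so the identity I actually need is a "local" one on a fixed rectangle, and the $\mu$'s come along for the ride since toggling respects the relevant translations on the interior.

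Part (b) is comparatively soft and I would prove it separately, as a consequence of the \emph{reciprocity / antipodal} structure already visible in Example~\ref{eg:2x2}. When $M\ge k$ — i.e.\ $(i,j)$ is close enough to the bottom that $k$ exceeds both $i$ and $j$ — the claim $\rho_B^{k+1}(i,j)=1/\rho_B^{k-i-j}(r-i,s-j)$ should follow either from the known reciprocity result (Corollary~\ref{cor:reciprocity}, if we are permitted to cite it) or, to keep the argument self-contained, by checking that both sides satisfy the same toggle recursion with the same initial conditions. Concretely, one verifies that the map $g^{(t)}(i,j):=1/\rho_B^{t-(r+s)-2+\cdots}(r-i,s-j)$ (with the correct index shift) is itself a trajectory of $\rho_B$ — this is essentially the statement that $\rho_B$ is conjugate to its own inverse via the antipodal involution of $P$ — and then matches up the ranges $M\ge k$ and $M\le k$ at their common boundary $M=k$, where part (a) gives a $\varphi_0/\varphi_1$ expression that one checks by hand equals the reciprocal of the relevant earlier label.

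The main obstacle I anticipate is establishing the Plücker-type identity for the $\varphi$'s cleanly enough that the bookkeeping with $\mu^{(a,b)}$, $[k-i]_+$, and the three separate "regimes" ($M<k$ interior, $M=k$ boundary, $M>k$ antipodal) does not swamp the combinatorics; in particular, verifying that the path-switching bijection restricts correctly to the rank-selected subposets $\lpreg_{(m,n)}^k$ — i.e.\ that a switched family still lies in the correct window and still has the correct endpoints $s_\ell,t_\ell$ — is where the argument is most likely to require care. A secondary obstacle is handling the boundary vertices of $P$ (those with $i=0$, $j=0$, $i=r$, or $j=s$) in the toggle recursion, since there one of the covering or covered sums degenerates; I would absorb this by consistently working in $\whP$ with the labels $1$ at $\hat 0,\hat 1$ and checking that the corresponding $\varphi$'s specialize correctly (e.g.\ $\varphi_0 \equiv 1$ when its rank window is empty).
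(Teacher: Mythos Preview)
Your overall architecture matches the paper's: induction on $k$, base case $k=0$ by direct computation, inductive step via the toggle recursion reducing to a Pl\"ucker-type identity among the $\varphi$'s proved bijectively, and part~(b) handled separately by an induction anchored at the overlap $M=k$. Two corrections and one substantive warning, though.

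First, your displayed toggle recursion has the superscripts swapped relative to your own (correct) verbal description: since rowmotion toggles top-to-bottom, the elements \emph{covering} $v=(i,j)$ carry the new labels and the elements \emph{covered by} $v$ still carry the old ones. The recursion actually used is
\[
\rho_B^{k+1}(i,j)=\frac{\bigl(\rho_B^{k}(i-1,j)+\rho_B^{k}(i,j-1)\bigr)\bigl(\rho_B^{k+1}(i+1,j)\parallel\rho_B^{k+1}(i,j+1)\bigr)}{\rho_B^{k}(i,j)}.
\]

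Second, the quadratic identity you need is not of the shape $\varphi_{k+1}\varphi_{k-1}=\sum\varphi_k\varphi_k$. What actually drops out of the recursion is (writing $a=i-k$, $b=j-k$)
\[
\varphi_k(a,b)\,\varphi_{k-1}(a+1,b+1)=\varphi_k(a+1,b)\,\varphi_{k-1}(a,b+1)+\varphi_k(a,b+1)\,\varphi_{k-1}(a+1,b),
\]
together with its $k\mapsto k+1$ companion; both sides involve a $k$-tuple and a $(k-1)$-tuple of NILPs with base points shifted by one rank. A plain Lindstr\"om--Gessel--Viennot ``swap at the first intersection'' will not prove this three-term relation: the two NILP families are allowed to share vertices, and a single tail-swap does not move the base points in the required skewed way. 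The paper's bijection (and the one you should aim for) is the \emph{bounce path} or \emph{changing trails} technique of Fulmek--Kleber: from the extremal bottom source one alternately follows blue edges up and red edges down, producing one vertical and one horizontal bounce path; recoloring along the horizontal one (and along the short ``twigs'') slides all the lower endpoints one step left or right, yielding exactly the two summands on the right. You correctly flag the bijection as the main obstacle, but its mechanism is genuinely different from first-intersection swapping.

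Finally, the shifts $\mu^{(a,b)}$ do not quite ``come along for the ride.'' When $k>\min\{i,j\}$ the base point $(i-k,j-k)$ falls outside $P$, and one has to check that the bounce-path bijection respects the constraint that certain outer paths are forced to hug the boundary. The paper handles this by embedding $P$ in a larger rectangle and restricting to $(c,d)$-\emph{boundary-hugging} NILP families; showing that the bijection preserves this restriction (the horizontal bounce path cannot reach the frozen outer paths) is a real, if short, argument and is exactly the care you anticipate needing for the rank-selected windows $\lpreg_{(m,n)}^k$. Your treatment of part~(b)---anchor at $M=k$ by checking $\varphi_0/\varphi_1$ equals $1/x_{r-i,s-j}$, then propagate by the toggle recursion---is the same as the paper's.
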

\begin{rem} \label{rem:st}
We note that in the above formulae we only ever use $\varphi_k(m,n)$'s such that the triple $(k,m,n)$ satisfies
the hypothesis of Definition~\ref{def:phi}.  In particular, in part (a)
we deduce $$0 \leq k -M < k-M+1 \leq \min \{r-i+k-M, s-j+k-M\} + 1$$ from the two inequalities $r-i\geq 0$, $s-j\geq 0$, which both 
follow from $(i,j) \in [0,r]\times [0,s]$.
\end{rem}
\begin{rem}
Note that our formulae in (a) and (b) agree when $M=k$, as we will see as part of
Claim~\ref{claim:Mk}.  Additionally, we see 
that the formula $\rho_{B}^{k-i-j}(r-i,s-j)$ satisfies the hypotheses for part (a) as
follows: First by letting $K = k-i-j-1$, $I = r-i$ and $J=s-j$, we see that the formula $\rho_{B}^{k-i-j}(r-i,s-j) = \rho_B^{K+1}(I,J)$ is well-defined by part (a) if $[K-I]_+ + [K-J]_+ \leq K$.  Second, we assume that $(K-I)$ and $(K-J)$ are both positive, because this inequality holds automatically if one or both of 
$(K-I)$ or $(K-J)$ are negative.  Then the only way the hypothesis would fail is if 
$(K-I) + (K-J) > K$, i.e., 
$$(k-i-j-1) - (r-i) + (k-i-j-1) - (s-j) = 2k - r - s - 2 - i - j > k - i - j -1.$$ But that implies that $k > r + s + 1$, contracting our assumption $k \in [0,r+s+1]$.
\end{rem}

Since on $P = [0,r]\times [0,s]$ we have $\rho_{B}^{r+s+2+d} = \rho_{B}^{d}$ by
periodicity (Corollary~\ref{cor:periodicity}), this gives a formula for \textbf{all}
iterations of the birational rowmotion map on $P$.

In the ``generic'' case where shifting $(i,j)\mapsto (i-k, j-k)$ (straight down by $2k$ ranks)
still gives a point in $P$, we get the following much simplified formula (which we discovered
first and then generalized to the main theorem).  
\begin{corollary}\label{cor:noShiftMain}
\begin{equation}\label{eq:noShift}
\text{For }k\leq \min \{ i,j\},\  \rho_B^{k+1}(i,j) = \frac{\varphi_{k}(i-k,j-k)}{\varphi_{k+1}(i-k,j-k)}.
\end{equation}
\end{corollary}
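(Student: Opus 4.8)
The plan is to derive this directly from Theorem~\ref{thm:main}(a) by specializing the hypotheses. First I would observe that the condition $k \leq \min\{i,j\}$ forces $[k-i]_+ = 0$ and $[k-j]_+ = 0$, so that $M = [k-i]_+ + [k-j]_+ = 0$. In particular $M \leq k$ (as $k \geq 0$), so we are in case (a) of the theorem, not case (b). Substituting $M = 0$ into formula~\eqref{eq:main1}, the shift operator becomes $\mu^{(0,0)}$, which is the identity transformation on rational functions in the $A_{uv}$'s, and the subscripts $i-k+M$ and $j-k+M$ collapse to $i-k$ and $j-k$ respectively. Hence~\eqref{eq:main1} reads
\[
\rho_B^{k+1}(i,j) = \mu^{(0,0)}\!\left(\frac{\varphi_{k-0}(i-k,\,j-k)}{\varphi_{k-0+1}(i-k,\,j-k)}\right) = \frac{\varphi_{k}(i-k,\,j-k)}{\varphi_{k+1}(i-k,\,j-k)},
\]
which is exactly~\eqref{eq:noShift}.

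I would also include a brief remark checking that the $\varphi$'s appearing here are legitimately defined, i.e., that the triples $(k, i-k, j-k)$ and $(k+1, i-k, j-k)$ satisfy the hypothesis $k \leq \min\{r-m, s-n\}+1$ of Definition~\ref{def:phi}. This is exactly the content of Remark~\ref{rem:st} in the case $M = 0$: from $r - i \geq 0$ and $s - j \geq 0$ one gets $0 \leq k < k+1 \leq \min\{r-i+k, s-j+k\}+1 = \min\{(r-(i-k)), (s-(j-k))\}+1$, so both triples are valid inputs to $\varphi$. One should also note $k \leq \min\{i,j\}$ guarantees $i - k \geq 0$ and $j - k \geq 0$, so $(i-k, j-k) \in P$ and $\bigvee_{(i-k,j-k)}$ makes sense.

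There is essentially no obstacle here: the corollary is a pure specialization of the main theorem, and the only thing to be careful about is bookkeeping with the $[\,\cdot\,]_+$ notation and confirming we land in case (a). The one-line observation that drives everything is $k \leq \min\{i,j\} \iff [k-i]_+ = [k-j]_+ = 0 \iff M = 0$. Since the honest work is deferred to the proof of Theorem~\ref{thm:main} in Section~\ref{sec:proofs}, this corollary can be stated and its short proof given immediately after the theorem, with a pointer forward for the reader who wants the underlying argument.
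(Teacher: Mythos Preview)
Your proposal is correct and takes exactly the approach the paper intends: the corollary is stated immediately after Theorem~\ref{thm:main} as a direct specialization, and the paper does not even write out a separate proof, since the observation $k\leq\min\{i,j\}\Rightarrow M=0$ makes~\eqref{eq:noShift} an immediate instance of~\eqref{eq:main1}. Your added check that the $\varphi$'s are well-defined is precisely the $M=0$ case of Remark~\ref{rem:st}, so everything lines up.
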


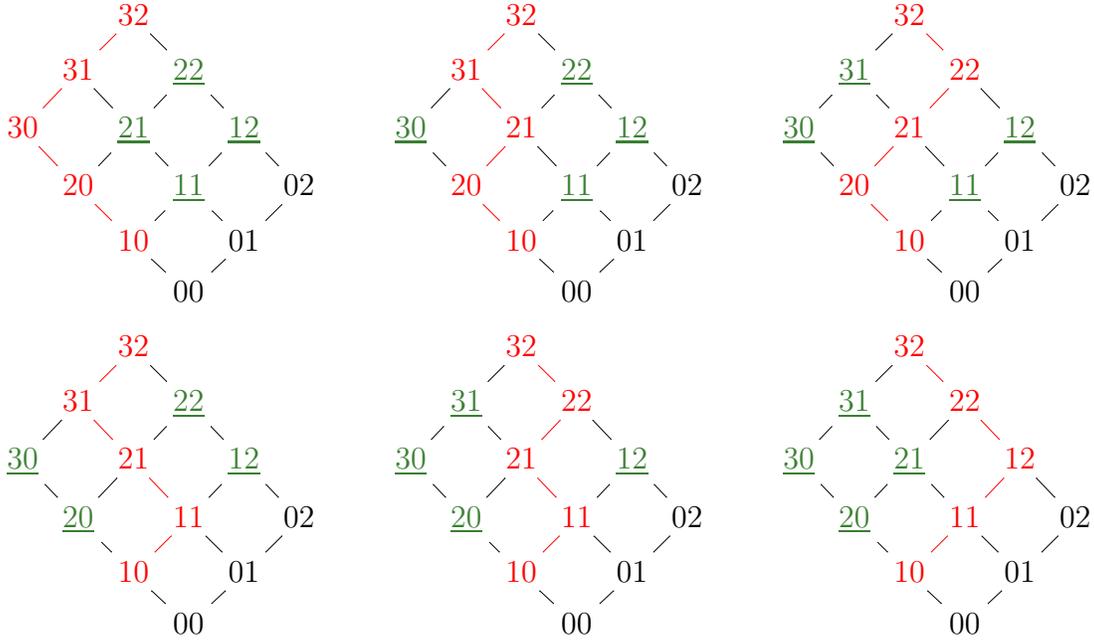
\begin{figure}[h]
\caption{The six lattice paths (shown in \cred{red}) used to compute $\varphi_{1}(1,0)$ in $[0,3]\times
[0,2]$. Corresponding $A$-variable subscripts are underlined in \cgrn{\ncirc {green}}.}
\label{fig:6LP}
\[
\xymatrixrowsep{0.4pc}\xymatrixcolsep{0.18pc}\xymatrix{
&&\cred{32} \ar@{-}[rd] \ar@{-}@[red][ld]\\
&\cred{31}\ar@{-}[rd] \ar@{-}@[red][ld]&& {\cgrn{\ncirc{22}}} \ar@{-}[rd] \ar@{-}[ld]\\
\cred{30}\ar@{-}@[red][rd] &&\cgrn{\ncirc{21}} \ar@{-}[rd] \ar@{-}[ld] & & \cgrn{\ncirc{12}} \ar@{-}[ld] \ar@{-}[rd]\\
&\cred{20}  \ar@{-}@[red][rd] && \cgrn{\ncirc{11}} \ar@{-}[rd] \ar@{-}[ld] & & 02 \ar@{-}[ld]\\ 
&& \cred{10}  \ar@{-}[rd] && 01 \ar@{-}[ld]\\
&&& 00
}
\qquad 
\xymatrixrowsep{0.4pc}\xymatrixcolsep{0.18pc}\xymatrix{
&&\cred{32} \ar@{-}[rd] \ar@{-}@[red][ld]\\
&\cred{31}\ar@{-}@[red][rd] \ar@{-}[ld]&& \cgrn{\ncirc{22}} \ar@{-}[rd] \ar@{-}[ld]\\
\cgrn{\ncirc{30}}\ar@{-}[rd] &&\cred{21} \ar@{-}[rd] \ar@{-}@[red][ld] & & \cgrn{\ncirc{12}} \ar@{-}[ld] \ar@{-}[rd]\\
&\cred{20}  \ar@{-}@[red][rd] && \cgrn{\ncirc{11}} \ar@{-}[rd] \ar@{-}[ld] & & 02 \ar@{-}[ld]\\ 
&& \cred{10}  \ar@{-}[rd] && 01 \ar@{-}[ld]\\
&&& 00
}
\qquad 
\xymatrixrowsep{0.4pc}\xymatrixcolsep{0.18pc}\xymatrix{
&&\cred{32} \ar@{-}@[red][rd] \ar@{-}[ld]\\
&\cgrn{\ncirc{31}}\ar@{-}[rd] \ar@{-}[ld]&& \cred{22} \ar@{-}[rd] \ar@{-}@[red][ld]\\
\cgrn{\ncirc{30}}\ar@{-}[rd] &&\cred{21} \ar@{-}[rd] \ar@{-}@[red][ld] & & \cgrn{\ncirc{12}} \ar@{-}[ld] \ar@{-}[rd]\\
&\cred{20}  \ar@{-}@[red][rd] && \cgrn{\ncirc{11}} \ar@{-}[rd] \ar@{-}[ld] & & 02 \ar@{-}[ld]\\ 
&& \cred{10}  \ar@{-}[rd] && 01 \ar@{-}[ld]\\
&&& 00
}
\]
\[
\xymatrixrowsep{0.4pc}\xymatrixcolsep{0.18pc}\xymatrix{
&&\cred{32} \ar@{-}[rd] \ar@{-}@[red][ld]\\
&\cred{31}\ar@{-}@[red][rd] \ar@{-}[ld]&& \cgrn{\ncirc{22}} \ar@{-}[rd] \ar@{-}[ld]\\
\cgrn{\ncirc{30}}\ar@{-}[rd] &&\cred{21} \ar@{-}@[red][rd] \ar@{-}[ld] & & \cgrn{\ncirc{12}} \ar@{-}[ld] \ar@{-}[rd]\\
&\cgrn{\ncirc{20}}  \ar@{-}[rd] && \cred{11} \ar@{-}[rd] \ar@{-}@[red][ld] & & 02 \ar@{-}[ld]\\ 
&& \cred{10}  \ar@{-}[rd] && 01 \ar@{-}[ld]\\
&&& 00
}
\qquad 
\xymatrixrowsep{0.4pc}\xymatrixcolsep{0.18pc}\xymatrix{
&&\cred{32} \ar@{-}@[red][rd] \ar@{-}[ld]\\
&\cgrn{\ncirc{31}}\ar@{-}[rd] \ar@{-}[ld]&& \cred{22} \ar@{-}[rd] \ar@{-}@[red][ld]\\
\cgrn{\ncirc{30}}\ar@{-}[rd] &&\cred{21} \ar@{-}@[red][rd] \ar@{-}[ld] & & \cgrn{\ncirc{12}} \ar@{-}[ld] \ar@{-}[rd]\\
&\cgrn{\ncirc{20}}  \ar@{-}[rd] && \cred{11} \ar@{-}[rd] \ar@{-}@[red][ld] & & 02 \ar@{-}[ld]\\ 
&& \cred{10}  \ar@{-}[rd] && 01 \ar@{-}[ld]\\
&&& 00
}
\qquad 
\xymatrixrowsep{0.4pc}\xymatrixcolsep{0.18pc}\xymatrix{
&&\cred{32} \ar@{-}@[red][rd] \ar@{-}[ld]\\
&\cgrn{\ncirc{31}}\ar@{-}[rd] \ar@{-}[ld]&& \cred{22} \ar@{-}@[red][rd] \ar@{-}[ld]\\
\cgrn{\ncirc{30}}\ar@{-}[rd] &&\cgrn{\ncirc{21}} \ar@{-}[rd] \ar@{-}[ld] & & \cred{12} \ar@{-}@[red][ld] \ar@{-}[rd]\\
&\cgrn{\ncirc{20}}  \ar@{-}[rd] && \cred{11} \ar@{-}[rd] \ar@{-}@[red][ld] & & 02 \ar@{-}[ld]\\ 
&& \cred{10}  \ar@{-}[rd] && 01 \ar@{-}[ld]\\
&&& 00
}
\]
\end{figure}

\begin{figure}[h]
\caption{The three pairs of lattice paths (shown in \cred{red} and \cblu{blue}) used to compute $\varphi_{2}(1,0)$ in $[0,3]\times
[0,2]$.  $A$-variable subscripts are underlined in \cgrn{\ncirc {green}}.}
\label{fig:3LP}
\[
\xymatrixrowsep{0.4pc}\xymatrixcolsep{0.18pc}\xymatrix{
&&32 \ar@{-}[rd] \ar@{-}[ld]\\
&\cred{31}\ar@{-}[rd] \ar@{-}@[red][ld]&& \cblu{22} \ar@{-}[rd] \ar@{-}@[blue][ld]\\
\cred{30}\ar@{-}@[red][rd] &&\cblu{21} \ar@{-}@[blue][rd] \ar@{-}[ld] & & \cgrn{\ncirc{12}} \ar@{-}[ld] \ar@{-}[rd]\\
&\cred{20}  \ar@{-}[rd] && \cblu{11} \ar@{-}[rd] \ar@{-}[ld] & & 02 \ar@{-}[ld]\\ 
&& 10  \ar@{-}[rd] && 01 \ar@{-}[ld]\\
&&& 00
}
\qquad 
\xymatrixrowsep{0.4pc}\xymatrixcolsep{0.18pc}\xymatrix{
&&32 \ar@{-}[rd] \ar@{-}[ld]\\
&\cred{31}\ar@{-}[rd] \ar@{-}@[red][ld]&& \cblu{22} \ar@{-}@[blue][rd] \ar@{-}[ld]\\
\cred{30}\ar@{-}@[red][rd] &&\cgrn{\ncirc{21}} \ar@{-}[rd] \ar@{-}[ld] & & \cblu{12} \ar@{-}@[blue][ld] \ar@{-}[rd]\\
&\cred{20}  \ar@{-}[rd] && \cblu{11} \ar@{-}[rd] \ar@{-}[ld] & & 02 \ar@{-}[ld]\\ 
&& 10  \ar@{-}[rd] && 01 \ar@{-}[ld]\\
&&& 00
}
\qquad 
\xymatrixrowsep{0.4pc}\xymatrixcolsep{0.18pc}\xymatrix{
&&32 \ar@{-}[rd] \ar@{-}[ld]\\
&\cred{31}\ar@{-}@[red][rd] \ar@{-}[ld]&& \cblu{22} \ar@{-}@[blue][rd] \ar@{-}[ld]\\
\cgrn{\ncirc{30}}\ar@{-}[rd] &&\cred{21} \ar@{-}[rd] \ar@{-}@[red][ld] & & \cblu{12} \ar@{-}@[blue][ld] \ar@{-}[rd]\\
&\cred{20}  \ar@{-}[rd] && \cblu{11} \ar@{-}[rd] \ar@{-}[ld] & & 02 \ar@{-}[ld]\\ 
&& 10  \ar@{-}[rd] && 01 \ar@{-}[ld]\\
&&& 00
}
\qquad 
\]
\end{figure}
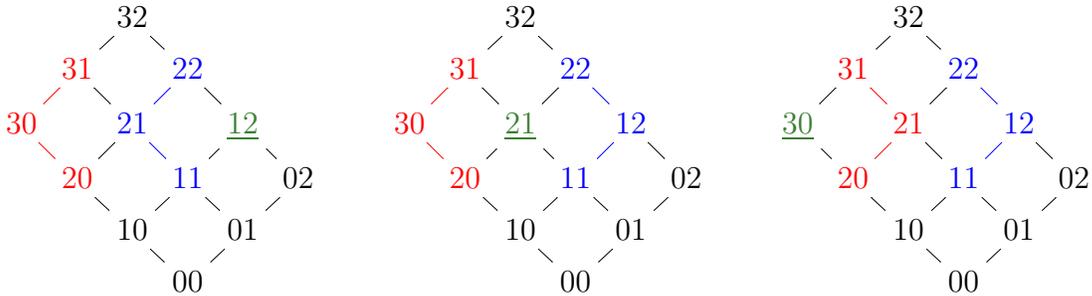

%\newpage

\begin{eg}\label{eg:main}
We use our main theorem to compute $\rho_{B}^{k+1}(2,1)$ for
$P = [0,3]\times [0,2]$ (the mirror image of the poset in Example~\ref{eg:Hasse}) for every $k\in \NN$.  
Here $r=3, s=2, i=2$, and $j=1$ throughout.   

\textbf{\bull When} $\mathbf{k=0}$, $M=0$ and we get 
$\ds \rho_{B}^{1}(2,1) = \frac{\varphi_{0}(2,1)}{\varphi_{1}(2,1)} =
\frac{A_{21}A_{22}A_{31}A_{32}}{A_{22}+A_{31}}.$  In general, we have
\begin{equation}\label{eq:phi0}
\varphi_{0}(i,j)=\prod_{(m,n)\geq (i,j)}A_{m,n}, 
\mathrm{~~where~the~product~runs~over~the~order~filter~of~}(i,j)\mathrm{~in~}P.
\end{equation}

\noindent (In this situation, there are 
no lattice paths to remove factors from the product.)  

\textbf{\bull When} $\mathbf{k=1}$, we still have $M = 0$, and $\ds \rho_{B}^{2}(2,1) = \frac{\varphi_{1}(1,0)}{\varphi_{2}(1,0)} = $
{\footnotesize \[
\frac{A_{11}A_{12}A_{21}A_{22} + A_{11}A_{12}A_{22}A_{30} + A_{11}A_{12}A_{30}A_{31} + 
A_{12}A_{20}A_{22}A_{30} + A_{12}A_{20}A_{30}A_{31} + A_{20}A_{21}A_{30}A_{31}}
	{A_{12}+A_{21} + A_{30}}.
\]}
For the numerator, $s_{1}= (1,0)$, $t_{1}=(3,2)$, and there are six lattice paths from $s_{1}$ to $t_{1}$,
each of which covers 5 elements and leaves 4 uncovered (Figure~\ref{fig:6LP}).  For the
denominator, $s_{1} = (2,0)$, $s_{2}= (1,1)$, $t_{1}=(3,1)$, and $t_{2}=(2,2)$, and each pair
of lattice paths leaves exactly one element uncovered (Figure~\ref{fig:3LP}). 

\textbf{\bull When} $\mathbf{k=2}$, we get $M=[2-2]_{+} + [2-1]_{+} = 1 \leq 2=k$.  So by part (a)
of the main theorem we have 
\[
\ds \rho_{B}^{3}(2,1) = \mu^{(1,0)}\left[ \frac{\varphi_{1}(1,0)}{\varphi_{2}(1,0)} \right] = 
    \text{(just shifting indices in the $k=1$ formula)}
\]
{\footnotesize \[
\frac{A_{01}A_{02}A_{11}A_{12} + A_{01}A_{02}A_{12}A_{20} + A_{01}A_{02}A_{20}A_{21} + 
A_{02}A_{10}A_{12}A_{20} + A_{02}A_{10}A_{20}A_{21} + A_{10}A_{11}A_{20}A_{21}}
	{A_{02}+A_{11} + A_{20}}.
\]}

\textbf{\bull When} $\mathbf{k=3}$, we get $M=[3-2]_{+} + [3-1]_{+} = 3 =k$. Therefore, 
\[
\rho_{B}^{4}(2,1) = \mu^{(2,1)}\left[ \frac{\varphi_{0}(2,1)}{\varphi_{1}(2,1)} \right] =
\mu^{(2,1)}\left[ \frac{A_{21}A_{22}A_{31}A_{32}}{A_{22}+A_{31}} \right] 
 = \frac{A_{00}A_{01}A_{10}A_{11}}{A_{01}+A_{10}}.
\] 
In this situation, we can also use part (b) of the main theorem to get 
\[
\rho_{B}^{4}(2,1) = 1/\rho_{B}^{3-2-1}(3-2, 2-1) = 1/\rho_{B}^{0}(1,1) = \frac{1}{x_{11}}. 
\]
The equality between these two expressions is easily checked as $$
\frac{A_{00}A_{01}A_{10}A_{11}}{A_{01}+A_{10}} = \frac{\frac{1}{x_{00}} 
\frac{x_{00}}{x_{01}} \frac{x_{00}}{x_{10}} \frac{x_{01} + x_{10}}{x_{11}}
}{\frac{x_{00}}{x_{01}} + \frac{x_{00}}{x_{10}}} =  \frac{1}{x_{11}}.$$  

\textbf{\bull When} $\mathbf{k=4}$, we get $M=[4-2]_{+} + [4-1]_{+} = 5 > k$. Therefore, 
by part (b) of the main theorem, then part (a),
\[
\rho_{B}^{5}(2,1) = 1/\rho_{B}^{4-2-1}(3-2, 2-1) = 1/\rho_{B}^{1}(1,1)
= \frac{\varphi_{1}(1,1)} {\varphi_{0}(1,1)} = \frac{A_{12}A_{22}+A_{12}A_{31}+A_{21}A_{31}}{A_{11}A_{12}A_{21}A_{22}A_{31}A_{32}}. 
\]
Each term in the numerator is associated with one of the three lattice paths from $(1,1)$ to
$(3,2)$ in $P$, while the denominator is just the product of all $A$-variables in the
principal order filter $\bigvee{(1,1)}$. 

\textbf{\bull When} $\mathbf{k=5}$, we get $M=[5-2]_{+} + [5-1]_{+} = 7 > k$. Therefore, 
by part (b) of the main theorem, then part (a),
\[
\rho_{B}^{6}(2,1) = 1/\rho_{B}^{5-2-1}(3-2, 2-1) = 1/\rho_{B}^{2}(1,1)
 = \frac{\varphi_{2}(0,0)} {\varphi_{1}(0,0)}\]
\[ = (A_{02}A_{12}+A_{02}A_{21}+A_{11}A_{21}+A_{30}A_{02}+A_{30}A_{11}+A_{30}A_{20}) \bigg / \]
 {\tiny \[(A_{01}A_{11}A_{02}A_{21}A_{12}A_{22} + 
 A_{01}A_{11}A_{02}A_{30}A_{12}A_{22} + 
 A_{01}A_{11}A_{02}A_{30}A_{12}A_{31} + 
 A_{01}A_{20}A_{02}A_{30}A_{12}A_{22} + 
 A_{01}A_{20}A_{02}A_{30}A_{12}A_{31} +\]
\[ A_{01}A_{20}A_{02}A_{30}A_{21}A_{31} + 
  A_{10}A_{20}A_{02}A_{30}A_{12}A_{31} + 
  A_{10}A_{20}A_{02}A_{30}A_{21}A_{31} + 
  A_{10}A_{20}A_{02}A_{30}A_{12}A_{22} + 
  A_{10}A_{20}A_{11}A_{30}A_{21}A_{31}).
\]}

\vspace{-1.5em}The numerator here represents the six pairs of NILPs from $s_{1}=(1,0)$ and $s_2 = (0,1)$ to $t_1 = (3,1)$ and $t_2 = (2,2)$. 
Each of the ten terms in the denominator corresponds to the complement of a lattice path from 
$s_1=(0,0)$ to $t_1 = (3,2)$.

\textbf{\bull When} $\mathbf{k=6}$, we get $M=[6-2]_{+} + [6-1]_{+} = 9 > k$. Therefore, 
by part (b) of the main theorem, then part (a),
\[
\rho_{B}^{7}(2,1) = 1/\rho_{B}^{6-2-1}(3-2, 2-1) = 1/\rho_{B}^{3}(1,1)
=  \mu^{(1,1)} \left[ \frac{ \varphi_1(1,1)}{\varphi_0(1,1)} \right] 
\]
\[
= \mu^{(1,1)} \left[ \frac{A_{12}A_{22} + A_{12}A_{31} + A_{21}A_{31}}
{A_{11}A_{11}A_{21}A_{22}A_{31}A_{32}}  \right] 
= \frac{A_{01}A_{11} + A_{01}A_{20} + A_{10}A_{20}}
{A_{00}A_{01}A_{10}A_{11}A_{20}A_{21}} = x_{21}.
\]
We will later see that this last equality is an application of Claim \ref{claim:Mk}, but one can also
deduce this by plugging in $A_{00} = 1/x_{00}, A_{10} = x_{00}/x_{10}, A_{01} = x_{00}/x_{01}, A_{11} = (x_{10}+x_{01})/x_{11}, A_{20} = x_{10}/x_{20},$ and 
$A_{21} = (x_{20}+x_{11})/x_{21}$.  Notice that periodicity also kicks in for this case and $\rho_B^7(2,1) = \rho_B^0(2,1) = x_{21}$ using 
$(r+s+2) = 7$.

\textbf{\bull When} $\mathbf{k\geq 6}$, we get by periodicity that $\rho_{B}^{k+1}(i,j)=\rho_{B}^{g}(i,j)$, where 
$g=k+1 \bmod 7$ has already been computed above.  

\end{eg}

\subsection{Applications of the path formula}\label{ss:files}
Our path formula has several applications, allowing us to give direct proofs of
interesting properties of birational rowmowtion on products of two chains, namely those displayed in
Examples~\ref{eg:2x2}--\ref{eg:2x2files}.  Our first two results were the original two main theorems
of~\cite{GrRo15}.

\begin{corollary}[{\cite[Thm.~30]{GrRo15}}]\label{cor:periodicity}
The birational rowmotion map $\rho_{B}$ on the product of two chains $P=[0,r]\times [0,s]$
is periodic, with period $r+s+2$.  
\end{corollary}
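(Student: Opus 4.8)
The plan is to derive periodicity directly from the lattice-path formula of Theorem~\ref{thm:main}. The key observation is that $r+s+2$ is exactly the number $k+1$ at which the main formula ``wraps around'': setting $k = r+s+1$ in Theorem~\ref{thm:main} should recover the identity labeling $\rho_B^{r+s+2}(i,j) = x_{ij}$ for every $(i,j)\in P$, and then periodicity $\rho_B^{r+s+2+d} = \rho_B^d$ follows by applying $\rho_B^d$ to both sides (or by the iterative/substitutional nature of the construction in Example~\ref{eg:2x2}).

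First I would fix $(i,j)\in[0,r]\times[0,s]$ and compute $M = [k-i]_+ + [k-j]_+$ at $k = r+s+1$. Since $k-i = r+s+1-i \geq s+1 > s \geq j$ (using $i\leq r$), and symmetrically $k-j \geq r+1 > i$, we are in case \textbf{(b)} of Theorem~\ref{thm:main}: both $k-i$ and $k-j$ are positive and $M = (k-i)+(k-j) = 2(r+s+1) - i - j - 2(i+j)$... more carefully $M = (r+s+1-i) + (r+s+1-j) = 2(r+s+1) - i - j$, which visibly exceeds $k = r+s+1$ as soon as $i+j < r+s+1$; and when $i+j = r+s$ (the coatoms) or $i+j=r+s+1$ one checks $M\geq k$ still holds, so case (b) always applies. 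Thus
\[
\rho_B^{r+s+2}(i,j) = \rho_B^{k+1}(i,j) = 1/\rho_B^{k-i-j}(r-i,\,s-j) = 1/\rho_B^{r+s+1-i-j}(r-i,\,s-j).
\]
Now I apply part (b) \emph{again} — or rather part (a) — to the right-hand side: set $K = r+s-i-j$, $I = r-i$, $J = s-j$, so the inner term is $\rho_B^{K+1}(I,J)$ with $K - I = s - j \geq 0$ and $K - J = r - i \geq 0$, giving $M' = [K-I]_+ + [K-J]_+ = (s-j)+(r-i) = K - (i+j) + \cdots$; one computes $M' \leq K$ precisely because $M' = (r-i)+(s-j)$ and $K = (r-i)+(s-j)$, i.e. $M' = K$. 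So we are in the boundary case $M' = K$ of part (a), and by the remark following Theorem~\ref{thm:main} (to be established as Claim~\ref{claim:Mk}) the two branches agree there. The upshot is a formula for $\rho_B^{r+s+1-i-j}(r-i,s-j)$ as $\mu^{(\cdot,\cdot)}$ applied to a ratio $\varphi_0/\varphi_1$, and by \eqref{eq:phi0} together with the explicit forms $A_{i0} = x_{i-1,0}/x_{i,0}$, $A_{0j} = x_{0,j-1}/x_{0,j}$, $A_{00} = 1/x_{00}$, this ratio telescopes: the product over the principal order filter of all $A_{mn}$, divided by the single-path sum $\varphi_1$, collapses to exactly $1/x_{ij}$. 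Hence $\rho_B^{r+s+2}(i,j) = 1/(1/x_{ij}) = x_{ij}$.

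Having shown $\rho_B^{r+s+2} = \id$ on the generic labeling, periodicity with period dividing $r+s+2$ is immediate, and since $\rho_B$ acts by composition of birational maps, $\rho_B^{r+s+2+d}$ and $\rho_B^d$ agree as rational maps for all $d\geq 0$; the example computations (e.g. Example~\ref{eg:2x2}, where $r=s=1$ gives period $4$) show $r+s+2$ is the exact period, not merely a multiple. I would remark that the ``exact period'' claim, if needed, follows from inspecting a single entry — e.g. the value at $(0,0)$ is $1/f(r,s)$ after one step and does not return to $x_{00}$ before step $r+s+2$ — but for the statement as given (``periodic, with period $r+s+2$'') it suffices to exhibit $r+s+2$ as a period and note minimality from a small case or from the orbit structure already visible in the $A$-variable formula.

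The main obstacle I anticipate is the bookkeeping in the last telescoping step: carefully verifying that $\varphi_0(v,w)/\varphi_1(v,w)$, after the index shift $\mu^{([k-j]_+,[k-i]_+)}$, really does simplify to $1/x_{ij}$ rather than to some other monomial. This requires matching the rank-selection in the definition of $\lpreg^1_{(v,w)}$ (so that $\varphi_1$ is genuinely a sum over single lattice paths) against the product $\varphi_0 = \prod_{(m,n)\geq(v,w)} A_{mn}$, and then chasing the $A_{ij} = (x_{i,j-1}+x_{i-1,j})/x_{ij}$ substitution through the boundary cases $A_{i0}, A_{0j}, A_{00}$ — essentially the computation already carried out by hand in the $k=3$ and $k=6$ bullets of Example~\ref{eg:main}. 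The one genuine subtlety is confirming that the case analysis ($M\geq k$ at $k=r+s+1$, then $M'=K$ for the reflected point) is exhaustive and lands exactly on the boundary of part (a) where Claim~\ref{claim:Mk} guarantees consistency; everything else is a direct unwinding of Theorem~\ref{thm:main}.
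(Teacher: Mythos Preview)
Your approach is correct and essentially the same as the paper's, but you are making the second step harder than necessary. The paper's proof is three lines: apply part (b) of Theorem~\ref{thm:main} \emph{twice}. The first application (at $k=r+s+1$, where indeed $M\geq k$) gives $\rho_B^{r+s+2}(i,j)=1/\rho_B^{r+s+1-i-j}(r-i,s-j)$, exactly as you found. For the second application, set $K=r+s-i-j$, $I=r-i$, $J=s-j$; you correctly compute $M'=K$, so part (b) applies again and yields $\rho_B^{K+1}(I,J)=1/\rho_B^{K-I-J}(r-I,s-J)=1/\rho_B^{0}(i,j)=1/x_{ij}$. Done. There is no need to invoke part (a), Claim~\ref{claim:Mk}, or any telescoping of the $A$-variables: applying (b) the second time lands you directly on $\rho_B^0$, the identity labeling. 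Your detour through $\varphi_0/\varphi_1$ and the boundary $A_{ij}$'s would work, but it re-proves Claim~\ref{claim:Mk} inside what should be a one-line corollary. (Also, your aside ``$i+j=r+s+1$'' is vacuous since $i+j\leq r+s$ in $P$.)
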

\begin{proof}
Apply part (b) of the main theorem twice, first with $k=r+s+1$, then with $k=(r-i)+(s-j)$
(checking in each case that $M\geq k$) to obtain 
\[
\rho_{B}^{r+s+2}(i,j) = 1/\rho_{B}^{r+s+1-i-j}(r-i, s-j) = 1\bigg/\frac{1}{\rho_{B}^{0}(i,j)} =
\rho_{B}^{0}(i,j). 
\]
\end{proof}

\begin{corollary}[{\cite[Thm.~32]{GrRo15}}]\label{cor:reciprocity}
The birational rowmotion map $\rho_{B}$ on the product of two chains $P=[0,r]\times [0,s]$
satisfies the following reciprocity: 
\[
\rho_{B}^{i+j+1}(i,j) = 1/\rho_{B}^{0}(r-i,s-j) = \frac{1}{x_{r-i,s-j}}.
\]
\end{corollary}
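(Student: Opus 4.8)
The plan is to specialize part (b) of Theorem~\ref{thm:main} to the single value $k = i+j$, after which the statement falls out immediately. So there is really no new difficulty to overcome; the substance has already been absorbed into establishing the path formula and, in particular, part (b). What remains is just bookkeeping.

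First I would check that $k=i+j$ is a legitimate choice, i.e., that $k\in[0,r+s+1]$. Since $(i,j)\in[0,r]\times[0,s]$ we have $0\le i+j\le r+s\le r+s+1$, so indeed $k=i+j$ lies in the allowed range, and (incidentally) $k<r+s+2$, so no appeal to periodicity is needed. Next I would verify the hypothesis $M\ge k$ required by part (b). With $k=i+j$ and $i,j\ge 0$,
\[
M = [k-i]_{+} + [k-j]_{+} = [j]_{+} + [i]_{+} = i+j = k,
\]
so in fact $M=k$; this is precisely the boundary case in which, by the remark following Theorem~\ref{thm:main} (and Claim~\ref{claim:Mk}), formulas (a) and (b) coincide.

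Now part (b) gives directly
\[
\rho_B^{\,i+j+1}(i,j) = \rho_B^{\,k+1}(i,j) = 1/\rho_B^{\,k-i-j}(r-i,s-j) = 1/\rho_B^{0}(r-i,s-j),
\]
using $k-i-j=0$ in the last step. Since $\rho_B^{0}$ is the generic initial labeling, $\rho_B^{0}(r-i,s-j)=x_{r-i,s-j}$, and hence $\rho_B^{\,i+j+1}(i,j)=1/x_{r-i,s-j}$, as claimed. The only points deserving a sentence of care are the two bookkeeping checks above, together with the (trivial) observation that $\rho_B^{0}$ is the identity, so that the right-hand side of part (b) genuinely equals $x_{r-i,s-j}$ without any further appeal to part (a). A reader wanting a concrete sanity check can compare with the $k=3$ computation of $\rho_B^{4}(2,1)=1/x_{11}$ in Example~\ref{eg:main}, which is exactly this corollary for $r=3$, $s=2$, $(i,j)=(2,1)$.
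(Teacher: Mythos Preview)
Your proof is correct and takes exactly the same approach as the paper: specialize Theorem~\ref{thm:main}(b) to $k=i+j$. The paper's own proof is a single sentence to this effect; your additional bookkeeping (verifying $k\in[0,r+s+1]$ and $M=k$) is accurate and fills in details the paper leaves implicit.
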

\begin{proof}
This is the special case $k=i+j$ in Theorem~\ref{thm:main} (b). 
\end{proof}

Our formula also allows us to give the first proof of a ``file homomesy'' for
birational rowmotion on the product of two chains stated by
Einstein and Propp~\cite[Thm.~9 and remarks below Cor.~7]{EiPr13}. For completeness, we
summarize the necessary background here.

\begin{definition}[{\cite[Def.~1]{PR13}}]\label{def:homomesy}
\global \def\fstat{g}
Given a set $\calS$, 
an invertible map $\tau$ from $\calS$ to itself such that each
$\tau$-orbit is finite, 
and a function (or ``statistic'') $\fstat: \calS \rightarrow K$
taking values in some field $K$ of characteristic zero,
we say 
the triple $(\calS,\tau,\fstat)$ 
exhibits {\bf homomesy}\footnote{Greek for ``same middle''} 
if there exists a constant $c \in K$
such that for every $\tau$-orbit $\calO \subset \calS$
\begin{equation}
\label{general-ce}
\frac{1} {\#\calO}
\sum_{x \in \calO} \fstat(x) = c .
\end{equation}
In this situation
we say that the function $\fstat : \calS \rightarrow K$ is 
{\bf homomesic}
under the action of $\tau$ on $\calS$,
or more specifically \textbf{\textit{c}-mesic}.
\end{definition}
When $\calS$ is a finite set,
homomesy can be restated equivalently as 
all orbit-averages being equal to the global average:
\begin{equation}
\label{eq:global}
\frac{1}{\#\calO} \sum_{x \in \calO} \fstat(x) =
\frac{1}{\#\calS} \sum_{x \in \calS} \fstat(x).
\end{equation}
One important example is that for the action of combinatorial rowmotion $\rho$ acting on
the set of order ideals $J(P)$, where $P = [0,r]\times [0,s]$, the cardinality statistic
$g=\#I$ is
$\frac{(r+1)(s+1)}{2}$-mesic.  But there are other homomesies for this action on $P$ as well,
e.g., for any fixed \emph{file} (see Example~\ref{eg:Hasse} and the preceding paragraph) $F$ of
$P$, the statistic $g=\#(I\cap F)$, which only counts the 
number of elements of $I$ within $F$ is homomesic.  
It is fruitful to consider these statistics as being
the sums of the indicator function statistics $\{\II_{x}:x\in P \}$, where for $I\in J(P)$, 
$\II_x(I) = 1$ if $x\in I$ and 0 otherwise. 
This is because linear combinations of such homomesic
statistics are also homomesic.  

As explained in \cite[\S 4.1]{PR13}, the collection of homomesic statistics that can be written as linear
combinations of the indicator statistics $\{\II_{x}:x\in P \}$  can all be
generated by just two kinds of fundamental homomesies: (a) $\II_{x}+\II_{y}$ where $x$ and $y$
are antipodal elements of the poset and (b) $\ds \sum_{x\in F}\II_{x}$, where $F$ is a file
of $P$.

For the detropicalized (or birational) version of homomesy on the rectangular poset
$P=[0,r]\times [0,s]$, the sums that define homomesy are transformed into products and the indicator statistics
$\II_{(i,j)}$ (for $(i,j) \in P$) are replaced 
by the statistic $\IIb_{(i,j)}(f): = f(i,j)$, i.e., simply the value of the birational
labeling $f$ at $(i,j)\in P$. 
Consequently the first
kind of fundamental homomesy becomes a ``geometric homomesy'' that 
(a) 
$\IIb_{(i,j)} \cdot
\IIb_{(r-i,s-j)} $ 
gives 1 when multiplied across a period of $\rho_{B}$
while the second kind is the same statement for
(b)
$\prod_{(i,j)\in F} \IIb_{(i,j)}$. 
The previous corollary (Corollary \ref{cor:reciprocity}) implies the first fundamental birational homomesy.  
The second fundamental birational homomesy is equivalent to the following
Theorem~\ref{thm:homomesy}, yielding the complete set of such birational homomesic statistics expected
for $\rho_{B}$. 
\begin{definition}\label{def:bi}
Given an action $\tau$ of period $n$ on a set of objects $S$ and a statistic $\xi :S\rightarrow \KK$, where
$\KK$ is any field, we call $\xi $ \textbf{birationally homomesic} 
if the value of $\prod_{k=0}^{n-1}\xi (\tau^{k}(s))$ is a constant $c\in \KK $,
\emph{independent} of $s$.  
\end{definition}

\begin{theorem} \label{thm:homomesy}
Given a choice of file
$F$ in $P = [0,r] \times [0,s]$, we have the identity  
$$\ds \prod_{k=0}^{r+s+1}\prod_{(i,j)\in F} \rho_{B}^{k}(i,j) = 1,$$ 
i.e., the statistic
$\prod_{(i,j)\in F} \IIb_{(i,j)}$ is birationally homomesic under
the action of birational rowmotion $\rho_B$.

More specifically, a choice of file $F$ is determined by the choice of an element on
the upper boundary, which may have the form $(r,d)$ for $0 \leq d \leq s$
or the form $(d,s)$ for $0 \leq d \leq r$.  Assuming without loss of
generality that $s \leq r$, this second case breaks further into two
subcases depending on whether $s \leq d$ or $d < s$.  Hence, the identity
above becomes one of the following double-product identities: 
\[
 \prod_{k=0}^{r+s+1} \prod_{c=0}^{d} \rho_B^{k+1}(r-c,d-c) = 1
 \mathrm{~~~if~~~} d < s \leq r, \qquad (a)
\]
\[
 \prod_{k=0}^{r+s+1} \prod_{c=0}^{d} \rho_B^{k+1}(d-c,s-c) = 1
 \mathrm{~~~if~~~} d < s \leq r, \qquad (b)
\]
\[
\prod_{k=0}^{r+s+1} \prod_{c=0}^{s} \rho_B^{k+1}(d-c,s-c) = 1 \mathrm{~~~if~~~} s \leq d \leq r. \qquad (c)
\]

\end{theorem}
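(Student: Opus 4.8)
The plan is to use the lattice-path formula (Theorem~\ref{thm:main}) to reduce the triple product over a file to a manageable telescoping product, and then to handle the residual factors by a double-counting identity among the $\varphi_k$'s. First I would fix a file $F$ and, as in the theorem statement, parametrize its elements by a single running index $c$, so that the claim becomes one of the three double-product identities $(a)$, $(b)$, $(c)$. For each fixed element $(i,j) \in F$, I would split the $(r+s+2)$-fold product $\prod_{k=0}^{r+s+1}\rho_B^{k+1}(i,j)$ according to whether part (a) or part (b) of Theorem~\ref{thm:main} applies (i.e.\ whether $M = [k-i]_+ + [k-j]_+ \le k$ or $> k$). In the range where part (b) applies, each factor $\rho_B^{k+1}(i,j)$ is literally the reciprocal of some $\rho_B^{k'}(r-i,s-j)$ with $k' = k-i-j < $ the corresponding bound, so those factors pair up (across the whole file, after reindexing $c \mapsto d-c$ or $s-c$, which is exactly the symmetry built into the three cases) with part-(a) factors of the antipodal file element and cancel. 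This is the conceptual heart: the $\rho_B^{k+1} = 1/\rho_B^{k-i-j}$ reciprocity already does most of the bookkeeping, leaving only the ``boundary'' values of $k$ where $M = k$ exactly, together with a small window of genuinely part-(a) values near $k = i+j$.

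Next, for the surviving part-(a) factors I would substitute the explicit ratio $\rho_B^{k+1}(i,j) = \mu^{([k-j]_+,[k-i]_+)}\bigl(\varphi_{k-M}/\varphi_{k-M+1}\bigr)$ and collect, over all elements $(i,j)$ of the file and all relevant $k$, the numerator and denominator $\varphi$'s. Because consecutive file elements differ by $(1,1)$ and the shift operators $\mu^{(a,b)}$ and the arguments $(i-k+M, j-k+M)$ are correlated along the file, I expect massive telescoping: the denominator $\varphi_{k-M+1}(\,\cdot\,)$ coming from one $(k,(i,j))$ should match a numerator $\varphi_{k-M}(\,\cdot\,)$ from an adjacent $(k,(i,j))$ along the file (this is the ``telescoping sequence of cancellations'' the introduction promises). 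I would organize this telescoping carefully — it is cleanest to first verify it in the generic case of Corollary~\ref{cor:noShiftMain} (where $M=0$ and no shifts appear) and then track how the $\mu$-shifts and the $M>0$ corrections at the ends of the file modify the picture. After the telescoping, what should remain is a product of a bounded number of $\varphi_k(m,n)$'s with specific small indices (essentially the top and bottom ``corner'' contributions), and the claim reduces to showing this leftover product equals $1$.

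That leftover identity is where the double-counting argument enters. Unwinding the definition \eqref{eq:phi}, $\varphi_k(m,n)$ is a sum over NILP families $\mathcal L$ of $\prod_{(i,j)\notin \bigcup L_\ell} A_{ij}$ over the rank-selected region $\lpreg_{(m,n)}^k$, and $\varphi_0(m,n) = \prod_{(m',n')\ge(m,n)} A_{m'n'}$ is just the full product over the order filter. I would prove the residual identity by exhibiting a bijection (in the spirit of the colorful NILP bijections of Section~\ref{sec:proofs}, and of Fulmek--Kleber~\cite{FuKl01}) between the NILP families contributing to the numerator $\varphi$'s and those contributing to the denominator $\varphi$'s, matching each family to one whose uncovered-vertex multiset of $A$-variables is identical; summing then gives equality of the two polynomials and hence the product $1$. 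The main obstacle I anticipate is precisely this last step: getting the telescoping to terminate in a clean enough leftover that the bijection is transparent, rather than an unwieldy alternating product of $\varphi$'s. I would mitigate this by doing the $d < s$ subcases $(a)$ and $(b)$ first (where the file does not touch the ``far'' corner and the index ranges are more symmetric), using periodicity (Corollary~\ref{cor:periodicity}) freely to rotate the product into the most convenient window of $k$, and only then treating the longer file of case $(c)$, where one extra boundary term from the $s \le d$ overlap must be absorbed into the bijection.
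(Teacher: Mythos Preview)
Your overall architecture (substitute Theorem~\ref{thm:main}, telescope the $\varphi$-ratios, then clean up a residual product) is indeed the paper's, but your first step contains a genuine error that would derail the argument.

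You propose to use part~(b), $\rho_B^{k+1}(i,j)=1/\rho_B^{k-i-j}(r-i,s-j)$, to cancel the ``large $k$'' factors against part-(a) factors of the antipodal element, ``across the whole file after reindexing $c\mapsto d-c$.''  But the antipode $(r-i,s-j)$ of a point $(i,j)\in F$ is \emph{not} in $F$: if $F$ is the file through $(r,d)$ (so $j-i=d-r$ on $F$), then the antipode of $(r-c,d-c)$ is $(c,s-d+c)$, which lies in the file through $(d,s)$ (where $j-i=s-d$).  So your proposed cancellation relates two \emph{different} files, and at best would show that the product over $F$ times the product over its antipodal file is $1$ --- which does not give either product separately.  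The paper does not try to cancel via reciprocity at all; instead it uses part~(b) only to rewrite each large-$k$ factor as a ratio of $\varphi$'s (evaluated at the antipodal coordinates), and then runs a single telescoping over \emph{all} $k$: the numerator of $\rho_B^{k+1}(r-c,d-c)$ matches the denominator of $\rho_B^{k}(r-c-1,d-c-1)$, i.e.\ the cancellation is diagonal in $(k,c)$, not within a fixed $k$.

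Your plan for the residual step also diverges from the paper and would likely get stuck as stated.  After the diagonal telescoping, the leftover is not a short alternating product amenable to a single NILP bijection; it splits into two qualitatively different pieces.  The factors of the form $\mu^{(\ast,\ast)}\varphi_\ell(\ast,\ast)$ with $\ell\ge 1$ (the paper's (\ref{eq:F3})--(\ref{eq:F5})) are each identically~$1$, because in those cases the sources and sinks force a \emph{unique} NILP family that covers the whole region $\lpreg^{\,\ell}$, so the sum~\eqref{eq:phi} is the empty product.  What remains are only $\varphi_0$-factors ((\ref{eq:F1}) and (\ref{eq:F2})), and since $\varphi_0(m,n)=\prod_{(u,v)\ge(m,n)}A_{uv}$ these are explicit monomials in the $A$'s; the paper finishes by computing the exponent of each $A_{ij}$ on both sides and checking they agree (a direct double-counting over order filters, not a path bijection).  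If you rework your first step to keep the part-(b) factors in $\varphi$-form and telescope diagonally, you will land exactly on this residual, and the finish is elementary rather than bijective.
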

Figure~\ref{fig:decomp} shows the decomposition of an example poset into the above cases.
We defer the proof of Theorem~\ref{thm:homomesy} to the next section, after the proof of
Theorem~\ref{thm:main}. 

\begin{figure}
\caption{The decomposition of $P=[0,4]\times [0,3]$ according to the three cases in
Theorem~\ref{thm:homomesy}: \cred{(a) with top element $(r,d)$ for $d<s$ (in red)},\\  
\cblu{(b) with top $(d,s)$ for $d < s$ (blue)},  or 
(c) with top $(d,s)$ for $d \geq  s$ (black).} 
\label{fig:decomp}
\hspace{2em}
\xymatrixrowsep{0.4pc}\xymatrixcolsep{0.18pc}\xymatrix{
&&& 43  \ar@{-}[ld]  \ar@{-}[rd]\\ 
&& \cred{42}  \ar@{-}@[red][ld] \ar@{-}[rd] && 33 \ar@{-}[ld] \ar@{-}[rd]\\
&\cred{41} \ar@{-}@[red][ld] \ar@{-}@[red][rd]&&32 \ar@{-}[rd] \ar@{-}[ld] && \cblu{23}  \ar@{-}[ld] \ar@{-}@[blue][rd]\\
\cred{40} \ar@{-}@[red][rd]&&\cred{31}\ar@{-}[rd] \ar@{-}@[red][ld]&& {22} \ar@{-}[rd] \ar@{-}[ld]&&\cblu{13}  \ar@{-}@[blue][ld] \ar@{-}@[blue][rd]\\
&\cred{30}\ar@{-}@[red][rd] &&21 \ar@{-}[rd] \ar@{-}[ld] & & \cblu{12} \ar@{-}[ld] \ar@{-}@[blue][rd]&&\cblu{03} \ar@{-}@[blue][ld]\\
&&\cred{20}  \ar@{-}[rd] && 11 \ar@{-}[rd] \ar@{-}[ld] & & \cblu{02} \ar@{-}@[blue][ld]\\ 
&&& 10  \ar@{-}[rd] && \cblu{01} \ar@{-}[ld]\\
&&&& 00
}
\end{figure}
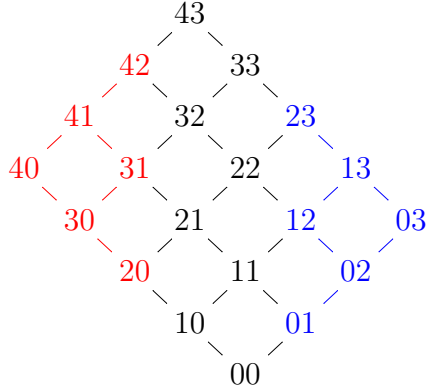

\section{Proof of Main Theorems}\label{sec:proofs}

\subsection{Special case $\mathbf{k=0}$}

We first prove Theorem~\ref{thm:main} for the special case when $k=0$, 
then for larger $k$ by induction working from the top of the
poset. We will often need to distinguish those elements on the \textbf{upper boundary} of
$P$, namely $\{(i,j): i=r  \text{ or } j=s \}$, each of which is covered by exactly
one element in $\whP$. All other elements of $P$ are covered by exactly two elements.  

As an initial case, at the top element $(\bfr ,\bfs )$ of $P$ we obtain 
\[
\rho_B^1(\bfr ,\bfs ) = \frac{x_{\bfr ,\bfs -1} + x_{\bfr -1,
\bfs }}{x_{\bfr , \bfs }} = A_{\bfr \bfs }  
\]
where the first equality is
by the definition of birational rowmotion (a single toggle), and the second is by the definition
of $A_{ij}$ (Equation~\eqref{eq:Aij}). 

Second, for any element $(\bfr, j)$ 
with $1\leq j\leq s$,
we assume by induction that $\rho_B^1(\bfr ,j) = \prod_{c=j}^{\bfs }
A_{\bfr ,c}$.  Then  
\[
\rho_B^1(\bfr ,j-1) = \frac{(x_{\bfr ,j-2}+x_{\bfr -1,j-1})(\rho_B^1(\bfr ,j))}{x_{\bfr ,j-1}} = \left(\frac{x_{\bfr ,j-2}+x_{\bfr -1,j-1}}{x_{\bfr ,j-1}}\right)\left(\prod_{c=j}^{\bfs }A_{\bfr ,c}\right) = \prod_{c=j-1}^{\bfs } A_{\bfr ,c},
\]
by the definition of $A_{\bfr ,j-1}$ and using the definition of birational rowmotion in the case where only a single
element covers it.   By symmetry we get a
formula for all (upper boundary) elements covered by a single element: 
\begin{equation}\label{eq:k=1;1cover}
\rho_B^1(\bfr ,j) = \prod_{c=j}^{\bfs } A_{\bfr ,c}, \text{ for } j\in [0,s] 
\qquad \text{ and } \qquad 
\rho_B^1(i,\bfs ) = \prod_{c=i}^{\bfr } A_{c, \bfs }, \text{ for } i\in [0,r]. 
\end{equation}
Note that this agrees with Theorem~\ref{thm:main} as follows.  Here $k=0$ forces $M=0$
and the shift transformation $\mu^{(0,0)}$ acts trivially.  Thus, Equation~\eqref{eq:main1} reduces to 
\[
\rho_{B}^{1}(i,j) = \frac{\varphi_{0}(i,j)}{\varphi_{1}(i,j)}. 
\]
The numerator is simply $\prod_{(u,v)\geq (i,j)} A_{uv}$, since the defining summation of
$\varphi_{0}(i,j)$ has only a single term, namely the empty tuple of lattice paths, and each
element of $\lpreg_{(i,j)}^{k} = \bigvee_{(i,j)}$ contributes one factor to that term's product.  
Hence, the value of the numerator agrees with the right-hand sides in Equation~\eqref{eq:k=1;1cover}.  
The denominator, $\varphi_{1}(i,j)$ for $i=\bfr $ or $j=\bfs $, equals $1$ because
there is a unique lattice path from $(i,j)$ to $(\bfr , \bfs) $ covering the entire
order filter $\bigvee_{(i,j)}$; thus, the summation consists of a single term, which is the
empty product, i.e., 1. 

All other elements $(i,j)$ of the poset are covered by exactly two elements.  In the case
that $(i,j)$ also covers (exactly) two elements, we obtain
\[
\rho_B^1(i,j) = \left(\frac{x_{i,j-1} + x_{i-1,j}}{x_{ij}}\right) \cdot \left( \rho_B^1(i,j+1) \parallel \rho_B^1(i+1,j)\right) 
= A_{ij} \cdot \left( \rho_B^1(i,j+1) \parallel \rho_B^1(i+1,j)\right).
\]

Recall that $a \parallel b$ denotes the parallel sum $\frac{1}{\frac{1}{a} + \frac{1}{b}}$.

\begin{rem} In the case that $(i,j)$ covers a single element of $P$, i.e., $i=0$ or $j=0$, recall that we defined $A_{i0}$, $A_{0j}$, and $A_{00}$ accordingly (see right after Equation~\eqref{eq:Aij}).  Thus,
\begin{equation}\label{eqn:2covers}
\rho_B^1(i,j) = A_{ij} \cdot \left( \rho_B^1(i,j+1) \parallel \rho_B^1(i+1,j)\right)
\end{equation}
holds for all $(i,j)$ covered by two elements (regardless of how many elements $(i,j)$ covers). 
\end{rem}

We claim that by induction (as long as $I+J > i+j$) that 
\[
\rho_B^1(I,J) =
\frac{1}{\sum_{\mathrm{~paths}~L} \frac{1}{\prod_{(p,q)\in P} A_{pq}}}
 = \sumpar_{\text{paths}~L} \prod_{(p,q)\in L} A_{pq},
\] where the sum is
over paths $L$ from the point $(I,J)$ up to the point $(\bfr ,\bfs )$.  (Here the large symbol $\sumpar$
denotes \emph{parallel summation}, the analogue of $\sum$ for parallel sums, which is
well-defined since $\parallel$ is associative and commutative.)  

In particular, in the special case of $\rho_B^1(\bfr ,j)$, there is a unique such path
and its weight is $\prod_{c=j}^{\bfs } A_{\bfr ,c}$, agreeing with the computation
above since the parallel sum of a single value is simply that value ($1/\frac{1}{a} = a$).  (By symmetry, we obtain 
$\rho_B^1(i,\bfs ) = \prod_{c=i}^{\bfr } A_{c,\bfs }$ as well.)  Then inductively, 

\begin{align*}
\rho_B^1(i,j) &= A_{ij}  \cdot \left( \sumpar_{\mathrm{~paths}~L~\text{from}~(i,j+1)}\prod_{(p,q)\in L} A_{pq} \ \ \  \parallel \sumpar_{\mathrm{~paths}~L~\text{from}~(i+1,j)}\prod_{(p,q)\in L} A_{pq} \right)\\ 
&= \sumpar_{\mathrm{~paths}~L~\text{from}~(i,j)}\prod_{(p,q)\in L} A_{pq},
\end{align*}
simply because every path from $(i,j)$  to $(\bfr , \bfs )$ must go either through
$(i,j+1)$ or $(i+1,j)$, and the $A_{ij}$ term distributes through.  
We finish the $k=0$ case by remarking that 
\begin{equation} \label{eq:k0} \rho_B^1(i,j) = \sumpar_{\mathrm{~paths}~L~\text{from}~(i,j)}\prod_{(p,q)\in L} A_{pq} = \frac{1}{\sum_{\mathrm{~paths}~L~\text{from}~(i,j)} \frac{1}{\prod_{(p,q)\in L} A_{pq}}}\end{equation}
$$ \hspace{-3em} = \frac{\varphi_0(i,j)}{\sum_{\mathrm{~paths}~L~\text{from}~(i,j)} \prod_{(p,q)\not\in L} A_{pq}} = \frac{\varphi_0(i,j)}{\varphi_1(i,j)},$$
where the second line comes from multiplying top and bottom by $\varphi_0(i,j)
=\prod_{(u,v)\geq (i,j)} A_{uv}$.  This
agrees with part (a) of our main theorem, where $k=0$ implies $[k-i]_{+} = [k-j]_{+} = M = 0$.  

\subsection{General case $\mathbf{k \geq 1}$}

We continue our proof by induction, starting by proving the case of $k=1$ on the upper boundary.  Then for each such $k$, we move downward through the entire rectangular poset by induction and then start again with a proof for the case of $(k+1)$ for the upper boundary.  To accomplish this proof we first verify two recurrence relations (Lemmas \ref{lem:bounce} and \ref{lem:genBounce}) that will be used for the induction step.  Both of these results are proven via combinatorial bijections.  Even though Lemma \ref{lem:bounce} looks like a special case, this result will imply Lemma \ref{lem:genBounce} and then Theorem \ref{thm:main} by verifying the recurrence used in our induction.

\begin{lemma} \label{lem:bounce} For $1 \leq k\leq \min \{ i,j \}$ we have the Pl\"ucker-like relation
\begin{align} \label{bounce-identity}
\begin{split}
\varphi_k(i-k,j-k) \varphi_{k-1}(i-k+1,j-k+1) &= \varphi_k(i-k+1,j-k) \varphi_{k-1}(i-k,j-k+1)\\ 
&+ \varphi_k(i-k,j-k+1) \varphi_{k-1}(i-k+1,j-k).
\end{split}
\end{align}
\end{lemma}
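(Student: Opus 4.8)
The plan is to prove the Pl\"ucker-like relation \eqref{bounce-identity} by an explicit weight-preserving bijection between the monomials contributing to the two sides: the ``colourful'' red/blue path-switching argument, in the spirit of Fulmek--Kleber~\cite{FuKl01}. It helps to split off the ``ambient'' product of $A$-variables first. Writing $\varphi_t(m,n)=\bigl(\prod_{(u,v)\in\lpreg_{(m,n)}^{t}}A_{uv}\bigr)\,\widehat{\varphi}_t(m,n)$, where $\widehat{\varphi}_t(m,n):=\sum_{\calL\in S_t(m,n)}\prod_{(p,q)\in\calL}A_{pq}^{-1}$ sums over families of $t$ NILPs with \emph{inverse} vertex weights, the first step is the (routine but fussy) comparison of the six rank-selected regions $\lpreg$ appearing in \eqref{bounce-identity}. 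Because $1\le k\le\min\{i,j\}$, each of these is a full rectangle with its two extreme ``corner'' staircases removed, so cardinalities and ratios of the ambient products $\prod_{\lpreg}A$ are explicit monomials. One finds that, after clearing a common monomial, \eqref{bounce-identity} is equivalent to a three-term relation among the $\widehat{\varphi}$'s carrying single-variable prefactors $A_{i-1,j-k}$ and $A_{i-k,j-1}$ --- the labels at the two extreme minimal elements $s_1$ and $s_k$ of the $\varphi_k$-staircase; these prefactors encode the ``one covering-edge absorbed'' discrepancy between the left side and each of the two right-side terms.

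The reduced identity is recognizably of Schur-function / Pl\"ucker type: by the Lindstr\"om--Gessel--Viennot lemma each $\widehat{\varphi}_t(m,n)$ is a $t\times t$ determinant $\det[h(s_a,t_b)]$, where $h(s,t)=\sum_{\text{monotone }s\to t}\prod_v A_v^{-1}$ and the $s_a,t_b$ run over the source and sink staircases of $\lpreg_{(m,n)}^{t}$. Every monotone path between a relevant source and a relevant sink stays inside one ambient rectangle, so all six determinants are minors of a single fixed matrix, and the key structural features become transparent: in every term the $\varphi_k$-factor and the $\varphi_{k-1}$-factor use \emph{identical} sink staircases (on two consecutive anti-diagonals near the top), while their source staircases sit on two consecutive anti-diagonals near the bottom and interleave --- the $k$ sources of the $\varphi_k$-factor forming one whole anti-diagonal, the $k-1$ sources of the $\varphi_{k-1}$-factor filling in all but two of the next anti-diagonal up. In this language \eqref{bounce-identity} is one of the three-term minor identities handled bijectively by Fulmek--Kleber, with the $A_\bullet$-prefactors recording that a minor using $s_1$ differs from one using the element covering $s_1$.

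It remains to give the bijection. Overlay a pair $(\calL,\calL')$ contributing to the left side, drawing the $k$-family $\calL$ in red and the $(k-1)$-family $\calL'$ in blue (as in Figure~\ref{fig:3LP}), on the common rectangle. Starting at an extreme red source, say $s_1$, follow its red path: its first step goes either to one of the two vertices of the upper source anti-diagonal that are unused on the left (peel off that step, which contributes a prefactor, and we are routing toward one of the two right-side terms) or onto the lowest blue source (routing toward the other). In the second case run the \emph{bounce}: follow the current path until it first meets a path of the other colour, switch onto that path, and repeat; this reroutes $\calL\cup\calL'$ into a pair of families whose source and sink sets are precisely those of one right-side term, all the while preserving the multiset of off-path vertices (hence the $A$-weight, up to the recorded prefactor). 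Reversibility of each elementary switch, together with the interlocking geometry of the staircases established above, shows the map is a bijection.

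The step I expect to be the main obstacle is this last one carried out in full: verifying that the bounce is well-defined and terminates for \emph{every} overlaid pair --- in particular for degenerate configurations in which red and blue paths coincide along long stretches --- and that its output always has the source/sink data of exactly one right-side term with the $A$-weight matched on the nose. In determinant terms this is the delicate part of making the LGV/Pl\"ucker reduction rigorous: confirming that only the identity connection pattern survives in each minor, with a controlled global sign, so that \eqref{bounce-identity} really is one of the Fulmek--Kleber identities. A lesser, purely computational obstacle is pinning down in the first step that the ambient-product ratios are exactly the single $A$-variables claimed, which comes down to a short count with the shapes and sizes of the six $\lpreg$ regions.
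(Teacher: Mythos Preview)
Your plan is on the right track --- this \emph{is} proved by a Fulmek--Kleber style ``bounce-path'' bijection on overlaid pairs of NILPs --- but your execution diverges from the paper's in two substantive ways, and one of your proposed shortcuts is something the authors explicitly tried and could not make work.

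First, the detour through ambient products and prefactors is unnecessary.  The paper works directly with the \emph{complement} weights that define $\varphi_t$: a pair $(\cblu{\calB},\cred{\calR})$ on the left contributes $\prod_{v\notin\cblu{\calB}}A_v\cdot\prod_{v\notin\cred{\calR}}A_v$, and the bijection is set up so that the edge-multiset of $\cblu{\calB}\cup\cred{\calR}$ is preserved exactly, with only colours reassigned.  Hence the monomial is preserved on the nose and no $A_{i-1,j-k}$, $A_{i-k,j-1}$ bookkeeping ever enters.  Your prefactor computation (which, to be fair, does come out correctly for small $k$) is simply absorbed into the structure of the bijection.

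Second, and more importantly, your single-bounce description is too thin.  The paper's bijection uses \emph{two} bounce paths, started from the leftmost and rightmost bottom $\cblu{\circ}$'s, together with the $(k-2)$ ``twigs'' joining the interior bottom $\cblu{\circ}$'s to the $\cred{\times}$'s just above.  Exactly one of the two bounce paths terminates at the top (``vertical'') and the other at the bottom (``horizontal''); one then swaps colours along the horizontal bounce path \emph{and all twigs}, swaps the $\cblu{\circ}/\cred{\times}$ markers on the bottom rank, and truncates the bottommost edge of the vertical bounce path.  Which of the two right-hand terms you land in is determined by which side the vertical bounce path sits on.  This two-paths-plus-twigs structure is precisely what makes well-definedness, termination, and reversibility transparent, including in the degenerate cases you flag as the main obstacle; a single bounce from $s_1$ alone will not reroute all $2k-1$ sources correctly.

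Finally, your suggestion to pass through LGV determinants and recognise \eqref{bounce-identity} as a standard minor identity is exactly the route the authors report they could \emph{not} close (see Section~\ref{sec:connect}): the sources and sinks sit on two different anti-diagonals at each end, so the six $\widehat{\varphi}$'s are not complementary minors of one matrix in the Desnanot--Jacobi sense, and the identity does not drop out of Dodgson condensation.  The bijective argument is not merely one option among several here; it is the argument.
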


Since this statement involves pairs of families of \textbf{non-intersecting lattice paths},
abbreviated below to \textbf{NILPs}, we prove it via a colorful combinatorial bijection.

\begin{proof}
The definition of $\varphi_{k}$ (Equation~\eqref{eq:phi}) involves summing 
monomials in the $A_{ij}$'s, with each term corresponding to the elements left uncovered by a
$k$-tuple of NILPs.  So a term on the left-hand side of the Lemma
is represented by a pair of NILPs $(\cblu{\calB}, \cred{\calR})$ 
offset from one another by one rank.  Example~\ref{eg:bouncePaths} gives an example to
illustrate both this and the bijection below.  
Specifically, the lower NILPs $\cblu{\calB}$, whose endpoints are marked with $\cblu{\circ}$,
represents a monomial from $\varphi_{k}(i-k,j-k)$, 
and the upper NILPs $\cred{\calR}$, whose endpoints are marked by $\cred{\times}$, represents one
from $\varphi_{k-1}(i-k+1,j-k+1)$.  Our goal is to transform this pair into a pair of NILPs
counted by one of the terms on the right-hand side of Lemma~\ref{lem:bounce}.  

Starting from the bottom $\cblu{\circ}$'s (lowest points in $\cblu{\calB}$), we create two \textbf{bounce
paths} and $(k-2)$ \textbf{twigs} as follows.  From the leftmost $\cblu{\circ}$ on the bottom, move up blue edges, i.e., edges in $\cblu{\calB}$, until encountering a vertex with a downward red
edge,  i.e., an edge in $\cred{\calR}$.  Then move down red edges until encountering a vertex with an upward blue edge.
Continue in this way, reversing directions whenever possible and only traversing unused
edges, until a terminal vertex is reached.  (No such path can terminate at an internal
vertex, since any edge by which one enters must be paired with a possible exit.)  Do the same procedure starting from the 
rightmost $\cblu{\circ}$ on the bottom.  We refer to both of these paths as \textbf{bounce paths}.

Since we reverse directions along bounce paths in a systematic way, we always follow blue edges upward and red edges downward.  
In addition to these two bounce paths, the $(k-2)$ $\cblu{\circ}$'s in the interior of the
bottom immediately connect to a $\cred{\times}$ in the rank second from the bottom.  We
refer to these blue edges as {\bf twigs}.  Since the twigs cover all but $1$ of the $(k-2)$
$\cred{\times}$'s, only one of the two bounce paths may return to the bottom of the poset
ending with a segment of downward red edges.   
Furthermore the $(k-1)$ red paths, starting from the $\cred{\times}$'s at the top, intersect $(k-1)$ of the $k$ topmost $\cblu{\circ}$'s, leaving only  $1$  $\cblu{\circ}$ untouched.  Note that a bounce path can only end at the top of the poset if it does not meet a red path that it can follow downward.
It follows that one of these two bounce paths ends at the top of the interval, at the
$\cblu{\circ}$ on top untouched by the red paths, and the other bounce path ends at the
bottom of the interval, at the $\cred{\times}$ on the bottom not covered by a twig.  We call the former
a \textbf{vertical bounce path} and the latter a \textbf{horizontal bounce path}.  (Note that in the case that all blue paths point northeast (resp.\ northwest) starting from the bottom   $\cblu{\circ}$'s, the horizontal bounce path turns out to be a twig as well.  The remainder of our procedure is consistent whether or not we treat this as a twig or as a horizontal bounce path.)  

We proceed by interchanging the colors of the edges along the horizontal bounce path, along all the twigs, and swap the $\cred{\times}$ and $\cblu{\circ}$ endpoints at the bottom, while leaving the remaining edges of $\cblu{\calB} \cup \cred{\calR}$ unchanged (also leaving the colors of the vertical bounce path unchanged).  
We then truncate the vertical bounce path by deleting the bottommost edge.  These transformations result in a new pair of lattice path families which we denote as $(\cblu{\calB}', \cred{\calR}')$.
The bottom endpoints for $\cblu{\calB}'$ will be one step either to the northeast or northwest of the original ones, indicating
respectively whether it is contributing to the first or second summand on the right-hand side of
Lemma~\ref{lem:bounce}.  The bottom endpoints of $\cred{\calR}'$ are skewed in the other
direction, i.e., the southwest or the southeast, respectively.   
 
Furthermore, if the lattice paths $L_B \in \cblu{\calB}$ and $L_R \in \cred{\calR}$ did not
originally intersect, then their edges would not lie along any bounce path.  Consequently,
$L_B$ would be a lattice path again in $\cblu{\calB}'$ unchanged\footnote{With the small
exception of possibly truncating the bottommost leftmost or rightmost edge.  However, even
this change would not affect intersections.}, and the same is true for $L_R$ in
$\cred{\calR}'$.  They would again not intersect.  On the other hand, if $L_B$ and $L_R$ did
originally intersect, then they could meet along a bounce path.  Being part of larger NILPs,
$L_B$ would not intersect any path in $\cblu{\calB}$ and $L_R$ would not intersect any path
in $\cred{\calR}$. Swapping colors of individual edges along $L_B$ and $L_R$ might break
this intersection-free property, but since all colors of edges along a horizontal bounce
path are swapped simultaneously, we ensure that each collection of paths, $\cblu{\calB}'$
and $\cred{\calR}'$, is still intersection-free.

Hence, the result is a new pair of NILPs $(\cblu{\calB}', \cred{\calR}')$ with the lower
endpoints of $\cblu{\calB}'$ on the second rank from the bottom of the interval skewed left
(resp.~right) while the lower endpoints of $\cred{\calR}'$ are on the bottom rank of the
interval and skewed right (resp.~left).  By construction, this map is well-defined, and
$\cblu{\calB}'$ is a collection of $k$ lattice paths from $\cblu{\circ}$'s to
$\cblu{\circ}$'s, and  $\cred{\calR}'$ is a collection of $(k-1)$ lattice paths from
$\cred{\times}$'s to $\cred{\times}$'s. 
Thus the new pair represents a pair of monomials counted by
$\varphi_k(i-k,j-k+1) \varphi_{k-1}(i-k+1,j-k)$ in the former case, and counted by
$\varphi_k(i-k+1,j-k) \varphi_{k-1}(i-k,j-k+1)$ in the latter case.

Finally this procedure is reversible, yielding the desired bijection.  In particular, given
a pair of NILPs $(\cblu{\mathcal{B}}'€™, \cred{\mathcal{R}}')$, which has the lower endpoints
of $\cblu{\mathcal{B}}'$ skewed left (resp. right) while the lower endpoints of
$\cred{\mathcal{R}}'$ are skewed right (resp. left), we build a vertical bounce path
starting from the leftmost (resp. rightmost) lower endpoint of $\cblu{\mathcal{B}}'$ and a
horizontal bounce path starting from the rightmost (resp. leftmost) lower endpoint of
$\cred{\mathcal{R}}'$.  Swapping colors along the horizontal bounce path and the twigs
(defined similarly to as above) yields a centrally symmetric pair of NILPs
$(\cblu{\mathcal{B}},\cred{\mathcal{R}})$.  The validity of this construction follows by the
same argument which we used above.

\end{proof}

\begin{eg}\label{eg:bouncePaths}
Let $k=5$, and consider the following pair of families of NILPs, $(\cblu{\calB}, \cred{\calR})$ 
shown in (\cblu{blue}, \cred{red}) in $\lpreg_{(i-5,j-5)}^{5}\cup
\lpreg_{(i-4,j-4)}^{4}$, with $r-i=s-j=2$. (Double edges shown in {\color{Plum}Plum} are used 
to represent one edge of each color because of limitations of our drawing package.)  
\[
\begin{array}{c}
\xymatrixrowsep{0.9pc}\xymatrixcolsep{0.20pc}\xymatrix{
& & & & \cred{\times} \ar@{--}@[red][ld] & & \cred{\times} \ar@{--}@[red][rd] & & \cred{\times} \ar@{--}@[red][rd] & & \cred{\times} \ar@{--}@[red][rd] & & & & \\
& & & \cblu{\circ} \ar@{-}@[blue][ld] \ar@{--}@[red][rd] & & \cblu{\circ} \ar@{-}@[blue][ld] & & \cblu{\circ} \ar@{-}@[blue][ld] \ar@{--}@[red][rd] & & \cblu{\circ} \ar@{=}@[Plum][rd] & & \cblu{\circ} \ar@{=}@[Plum][rd] & & & \\
& & \bullet \ar@{-}@[blue][ld]  & & \bullet \ar@{--}@[red][ld]\ar@{-}@[blue][rd] & & \bullet \ar@{-}@[blue][rd] & & \bullet \ar@{--}@[red][ld] & & \bullet \ar@{--}@[red][ld] \ar@{-}@[blue][rd] & & \bullet  \ar@{--}@[red][ld] \ar@{-}@[blue][rd] & & \\
& \bullet \ar@{-}@[blue][ld] & & \bullet \ar@{--}@[red][rd] & & \bullet \ar@{-}@[blue][ld] & & \bullet \ar@{--}@[red][ld] \ar@{-}@[blue][rd]& & \bullet \ar@{--}@[red][rd]& & \bullet \ar@{-}@[blue][ld] \ar@{--}@[red][rd] & & \bullet \ar@{-}@[blue][rd] & \\
\bullet \ar@{-}@[blue][rd] & & \bullet  & & \bullet \ar@{--}@[red][ld] \ar@{-}@[blue][rd]& & \bullet \ar@{--}@[red][ld] & & \bullet \ar@{-}@[blue][rd] & & \bullet \ar@{--}@[red][ld]\ar@{-}@[blue][rd]& & \bullet \ar@{--}@[red][ld]& & \bullet \ar@{-}@[blue][ld] \\
& \bullet \ar@{-}@[blue][rd] & & \bullet \ar@{--}@[red][rd] & & \bullet \ar@{=}@[Plum][rd] & & \bullet & & \bullet \ar@{=}@[Plum][ld] & & \bullet \ar@{=}@[Plum][ld] & & \bullet \ar@{-}@[blue][ld] & \\
& & \bullet \ar@{-}@[blue][rd] & & \cred{\times} & & \cred{\times} \ar@{-}@[blue][ld] & & \cred{\times} \ar@{-}@[blue][ld] & & \cred{\times} \ar@{-}@[blue][ld] & & \bullet \ar@{-}@[blue][ld] & & \\ 
& & & \cblu{\circ} & & \cblu{\circ} & & \cblu{\circ} & & \cblu{\circ} & & \cblu{\circ} & & &
}
\end{array}
\]

We create bounce paths and twigs as follows.  
\[
\begin{array}{c}
\xymatrixrowsep{0.9pc}\xymatrixcolsep{0.20pc}\xymatrix{
& & & & \cred{\times} & & \cred{\times} & & \cred{\times} & & \cred{\times} & & & & \\
& & & \cblu{\circ} \ar@{<-}@[blue][ld] \ar@{-->}@[red][rd]& & \cblu{\circ} \ar@{<-}@[blue][ld] & & \cblu{\circ} & & \cblu{\circ} & & \cblu{\circ} & & & \\
& & \bullet \ar@{<-}@[blue][ld]  & & \bullet & & \bullet & & \bullet & & \bullet \ar@{-->}@[red][ld] \ar@{<-}@[blue][rd] & & \bullet  \ar@{-->}@[red][ld] \ar@{<-}@[blue][rd] & & \\
& \bullet \ar@{<-}@[blue][ld]  & & \bullet & & \bullet & & \bullet \ar@{-->}@[red][ld] \ar@{<-}@[blue][rd]& & \bullet \ar@{-->}@[red][rd]& & \bullet \ar@{<-}@[blue][ld] \ar@{-->}@[red][rd] & & \bullet \ar@{<-}@[blue][rd] & \\
\bullet \ar@{<-}@[blue][rd] & & \bullet  & & \bullet \ar@{-->}@[red][ld] \ar@{<-}@[blue][rd]& & \bullet \ar@{-->}@[red][ld] & & \bullet \ar@{<-}@[blue][rd] & & \bullet \ar@{-->}@[red][ld]\ar@{<-}@[blue][rd]& & \bullet \ar@{-->}@[red][ld] & & \bullet \ar@{<-}@[blue][ld] \\
& \bullet \ar@{<-}@[blue][rd] & & \bullet \ar@{-->}@[red][rd] & & \bullet & & \bullet & & \bullet & & \bullet & & \bullet \ar@{<-}@[blue][ld] & \\
& & \bullet \ar@{<-}@[blue][rd] & & \cred{\times} & & \cred{\times} \ar@{<-}@[blue][ld]  & & \cred{\times} \ar@{<-}@[blue][ld] & & \cred{\times} \ar@{<-}@[blue][ld] & & \bullet \ar@{<-}@[blue][ld] & & \\ 
& & & \cblu{\circ} & & \cblu{\circ} & & \cblu{\circ} & & \cblu{\circ} & & \cblu{\circ} & & &
}
\end{array}
\]

Note that the leftmost bounce path is vertical, i.e., it ends at the top, so its colors remain the same.  The rightmost (horizontal) bounce path traverses the poset as follows: \cblu{NE}, \cblu{NE}, \cblu{NE}, \cblu{NW}, \cblu{NW}, \cred{SW}, \cblu{NW}, \cred{SW}, \cred{SE}, \cblu{NE}, \cred{SE}, \cred{SW}, \cblu{NW}, \cred{SW}, \cblu{NW}, \cblu{NW}, \cred{SW}, \cred{SW}, \cblu{NW}, \cred{SW}, \cred{SE}.  We interchange the colors along the twigs and the rightmost bounce path, which is horizontal.
\[
\begin{array}{c}
\xymatrixrowsep{0.9pc}\xymatrixcolsep{0.20pc}\xymatrix{
& & & & \cred{\times} & & \cred{\times} & & \cred{\times} & & \cred{\times} & & & & \\
& & & \cblu{\circ} & & \cblu{\circ} & & \cblu{\circ} & & \cblu{\circ} & & \cblu{\circ} & & & \\
& & \bullet  & & \bullet & & \bullet & & \bullet & & \bullet \ar@{--}@[blue][ld] \ar@{-}@[red][rd] & & \bullet  \ar@{--}@[blue][ld] \ar@{-}@[red][rd] & & \\
& \bullet & & \bullet & & \bullet & & \bullet \ar@{--}@[blue][ld] \ar@{-}@[red][rd]& & \bullet \ar@{--}@[blue][rd]& & \bullet \ar@{-}@[red][ld] \ar@{--}@[blue][rd] & & \bullet \ar@{-}@[red][rd] & \\
\bullet & & \bullet  & & \bullet \ar@{--}@[blue][ld] \ar@{-}@[red][rd]& & \bullet \ar@{--}@[blue][ld] & & \bullet \ar@{-}@[red][rd] & & \bullet \ar@{--}@[blue][ld]\ar@{-}@[red][rd]& & \bullet \ar@{--}@[blue][ld] & & \bullet \ar@{-}@[red][ld] \\
& \bullet & & \bullet \ar@{--}@[blue][rd] & & \bullet & & \bullet & & \bullet & & \bullet & & \bullet \ar@{-}@[red][ld] & \\
& & \bullet & & \cred{\times}  & & \cred{\times} \ar@{-}@[red][ld]  & & \cred{\times} \ar@{-}@[red][ld] & & \cred{\times} \ar@{-}@[red][ld] & & \bullet \ar@{-}@[red][ld] & & \\ 
& & & \cblu{\circ} & & \cblu{\circ} & & \cblu{\circ} & & \cblu{\circ} & & \cblu{\circ} & & &
}
\end{array}
\]
We then fill in the original edges (with their original colors) and swap $\cred{\times}$ and
$\cblu{\circ}$ at the bottom.
\[
\begin{array}{c}
\xymatrixrowsep{0.9pc}\xymatrixcolsep{0.20pc}\xymatrix{
& & & & \cred{\times} \ar@{--}@[red][ld]& & \cred{\times} \ar@{--}@[red][rd] & & \cred{\times} \ar@{--}@[red][rd] & & \cred{\times} \ar@{--}@[red][rd] & & & & \\
& & & \cblu{\circ} \ar@{-}@[blue][ld] \ar@{--}@[red][rd] & & \cblu{\circ} \ar@{-}@[blue][ld] & & \cblu{\circ} \ar@{-}@[blue][ld] \ar@{--}@[red][rd] & & \cblu{\circ} \ar@{=}@[Plum][rd] & & \cblu{\circ} \ar@{=}@[Plum][rd] & & & \\
& & \bullet \ar@{-}@[blue][ld]  & & \bullet \ar@{--}@[red][ld]\ar@{-}@[blue][rd] & & \bullet \ar@{-}@[blue][rd] & & \bullet \ar@{--}@[red][ld] & & \bullet \ar@{-}@[blue][ld] \ar@{--}@[red][rd] & & \bullet  \ar@{-}@[blue][ld] \ar@{--}@[red][rd] & & \\
& \bullet \ar@{-}@[blue][ld] & & \bullet \ar@{--}@[red][rd] & & \bullet \ar@{-}@[blue][ld] & & \bullet \ar@{-}@[blue][ld] \ar@{--}@[red][rd]& & \bullet \ar@{-}@[blue][rd]& & \bullet \ar@{--}@[red][ld] \ar@{-}@[blue][rd] & & \bullet \ar@{--}@[red][rd] & \\
\bullet \ar@{-}@[blue][rd] & & \bullet  & & \bullet \ar@{-}@[blue][ld] \ar@{--}@[red][rd]& & \bullet \ar@{-}@[blue][ld] & & \bullet \ar@{--}@[red][rd] & & \bullet \ar@{-}@[blue][ld]\ar@{--}@[red][rd]& & \bullet \ar@{-}@[blue][ld]& & \bullet \ar@{--}@[red][ld] \\
& \bullet \ar@{-}@[blue][rd] & & \bullet \ar@{-}@[blue][rd] & & \bullet \ar@{=}@[Plum][rd] & & \bullet & & \bullet \ar@{=}@[Plum][ld] & & \bullet \ar@{=}@[Plum][ld] & & \bullet \ar@{--}@[red][ld] & \\
& & \bullet \ar@{-}@[blue][rd]& & \cblu{\circ} & & \cblu{\circ} \ar@{--}@[red][ld] & & \cblu{\circ} \ar@{--}@[red][ld] & & \cblu{\circ} \ar@{--}@[red][ld] & & \bullet \ar@{--}@[red][ld] & & \\
& & & \cred{\times}  & & \cred{\times} & & \cred{\times} & & \cred{\times} & & \cred{\times} & & & \\
}
\end{array}
\]
Finally, we shorten the vertical bounce path by one edge, replacing $\cred{\times} \mapsto
\bullet$ with $\bullet \mapsto \cblu{\circ}$ so that the new starting point of the blue path
is at the same level as the other paths in 
$\cblu{\calB'}$.  

\[
\begin{array}{c}
\xymatrixrowsep{0.9pc}\xymatrixcolsep{0.20pc}\xymatrix{
& & & & \cred{\times} \ar@{--}@[red][ld]& & \cred{\times} \ar@{--}@[red][rd] & & \cred{\times} \ar@{--}@[red][rd] & & \cred{\times} \ar@{--}@[red][rd] & & & & \\
& & & \cblu{\circ} \ar@{-}@[blue][ld] \ar@{--}@[red][rd] & & \cblu{\circ} \ar@{-}@[blue][ld] & & \cblu{\circ} \ar@{-}@[blue][ld] \ar@{--}@[red][rd] & & \cblu{\circ} \ar@{=}@[Plum][rd] & & \cblu{\circ} \ar@{=}@[Plum][rd] & & & \\
& & \bullet \ar@{-}@[blue][ld]  & & \bullet \ar@{--}@[red][ld]\ar@{-}@[blue][rd] & & \bullet \ar@{-}@[blue][rd] & & \bullet \ar@{--}@[red][ld] & & \bullet \ar@{-}@[blue][ld] \ar@{--}@[red][rd] & & \bullet  \ar@{-}@[blue][ld] \ar@{--}@[red][rd] & & \\
& \bullet \ar@{-}@[blue][ld] & & \bullet \ar@{--}@[red][rd] & & \bullet \ar@{-}@[blue][ld] & & \bullet \ar@{-}@[blue][ld] \ar@{--}@[red][rd]& & \bullet \ar@{-}@[blue][rd]& & \bullet \ar@{--}@[red][ld] \ar@{-}@[blue][rd] & & \bullet \ar@{--}@[red][rd] & \\
\bullet \ar@{-}@[blue][rd] & & \bullet  & & \bullet \ar@{-}@[blue][ld] \ar@{--}@[red][rd]& & \bullet \ar@{-}@[blue][ld] & & \bullet \ar@{--}@[red][rd] & & \bullet \ar@{-}@[blue][ld]\ar@{--}@[red][rd]& & \bullet \ar@{-}@[blue][ld]& & \bullet \ar@{--}@[red][ld] \\
& \bullet \ar@{-}@[blue][rd] & & \bullet \ar@{-}@[blue][rd] & & \bullet \ar@{=}@[Plum][rd] & & \bullet & & \bullet \ar@{=}@[Plum][ld] & & \bullet \ar@{=}@[Plum][ld] & & \bullet \ar@{--}@[red][ld] & \\
& & \cblu{\circ} & & \cblu{\circ} & & \cblu{\circ} \ar@{--}@[red][ld] & & \cblu{\circ} \ar@{--}@[red][ld] & & \cblu{\circ} \ar@{--}@[red][ld] & & \bullet \ar@{--}@[red][ld] & & \\
& & & \bullet & & \cred{\times} & & \cred{\times} & & \cred{\times} & & \cred{\times} & & & \\
}
\end{array}
\]

The result is a new pair of NILPs $(\cblu{\calB}', \cred{\calR}')$.  In this example, the
lower endpoints of $\cblu{\calB}'$ are now skewed left, representing a monomial in
$\varphi_{5}(i-4, j-5)$, while those of $\cred{\calR}'$ are skewed right, 
representing a monomial in $\varphi_{4}(i-5, j-4)$.  In other examples, the skewing will
be opposite, giving a pair $(\cblu{\calB}', \cred{\calR}')$ corresponding to a pair of
monomials counted by $\varphi_{5}(i-5, j-4)\varphi_{4}(i-4,j-5)$.  

\end{eg}

The next lemma allows us to handle cases where shifting the point $(i,j)$ by $(-k,-k)$ lands
outside of the poset $P$.  In such cases we shift the point back inside $P$ so
that the lattice paths are well defined, shifting the indices of
the $A$-variables accordingly.      
\begin{lemma}\label{lem:genBounce}
For $i,j,k$ such that $(i,j) \in [0,r]\times [0,s]$ and 
$\left([k-i]_+ + [k-j]_+\right) \leq k \leq (r+s+1)$, we have the Pl\"ucker-like relation:
\begin{small}
$$\mu^{([k-j]_+,[k-i]_+)} \varphi_{k-M_{00}}(i-k+M_{00},j-k+M_{00})
~\mu^{([k-j-1]_+,[k-i-1]_+)} \varphi_{k-1-M_{11}}(i-k+1+M_{11},j-k+1+M_{11})$$
$$ = \mu^{([k-j]_+,[k-i-1]_+)} \varphi_{k-M_{10}}(i-k+1+M_{10},j-k+M_{10})  
~\mu^{([k-j-1]_+,[k-i]_+)} \varphi_{k-1-M_{01}}(i-k+M_{01},j-k+1+M_{01})$$
$$+ \mu^{([k-j-1]_+,[k-i]_+)} \varphi_{k-M_{01}}(i-k+M_{01},j-k+1+M_{01}) 
~\mu^{([k-j]_+, [k-i-1]_+)} \varphi_{k-1-M_{10}}(i-k+1+M_{10},j-k+M_{10}),$$
where $M_{00} = [k-i]_+ + [k-j]_+$, $M_{11} = [k-i-1]_+ + [k-j-1]_+$, $M_{01} = [k-i]_+ +
[k-j-1]_+$, and $M_{10} = [k-i-1]_+ + [k-j]_+$.  
\end{small}
\end{lemma}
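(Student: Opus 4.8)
The plan is to deduce Lemma~\ref{lem:genBounce} from Lemma~\ref{lem:bounce} by a case analysis on the signs of $k-i$ and $k-j$ --- equivalently, on whether the ``canonical base point'' $(i-k,j-k)$ underlying the configuration of Lemma~\ref{lem:bounce} already lies in the first quadrant. Observe that $[k-i]_+=[k-i-1]_+$ unless $k>i$, in which case $[k-i]_+=[k-i-1]_+ + 1$, so effectively there are only two regimes in each coordinate, giving four cases overall. In the case $k\le\min\{i,j\}$, all of $[k-i]_+,[k-j]_+,[k-i-1]_+,[k-j-1]_+$ vanish, hence $M_{00}=M_{11}=M_{01}=M_{10}=0$ and every $\mu$ acts as the identity; the asserted identity is then literally Lemma~\ref{lem:bounce}.

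In each of the other three cases, the idea is that the $M_{\ast\ast}$ and the exponents of the $\mu$'s are arranged precisely so that, after performing the prescribed re-indexing of the $A$-variables, each of the six factors equals the $\varphi$-polynomial of an honest rank-selected region inside $P$. Concretely, starting from one of the regions $\lpreg^{\kappa}_{(i-k+\delta_i,\,j-k+\delta_j)}$ (with $\kappa\in\{k,k-1\}$ and $(\delta_i,\delta_j)\in\{0,1\}^2$) appearing in Lemma~\ref{lem:bounce}, one intersects it with $\bigvee_{(0,0)}=P$ --- this discards whatever portion spilled into negative coordinates, the amount being exactly what is bookkept by the corresponding $M$ --- and then renames the surviving $A_{uv}$ via the attached $\mu$. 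I would then check, case by case, that the six regions so obtained assemble into three pairs of offset families of NILPs --- one pair for the left-hand side and one for each summand on the right --- exactly as in the combinatorial set-up of the proof of Lemma~\ref{lem:bounce}. Once that is verified, the colorful bounce-path bijection from that proof applies with only cosmetic changes, since it is an entirely local construction (it merely reroutes blue and red edges at each vertex, following blue upward and red downward) and hence is indifferent to whether the underlying regions are full parallelograms or have been truncated against the axes of $P$.

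I expect the index bookkeeping to be the main obstacle: verifying, across the four cases, that the subscripts $k-M_{00}$, $k-1-M_{11}$, $k-M_{01}$, $k-1-M_{10}$, the $\varphi$-arguments, and the four distinct $\mu$-exponents $([k-j]_+,[k-i]_+)$, $([k-j-1]_+,[k-i-1]_+)$, $([k-j]_+,[k-i-1]_+)$, $([k-j-1]_+,[k-i]_+)$ really do cohere into a single ``offset pair'' picture. A related subtlety is that when a coordinate gets truncated, the two families being paired need no longer differ in size by one (as they do in Lemma~\ref{lem:bounce}): they may instead coincide in size or differ the other way, so the ``one vertical and one horizontal bounce path'' dichotomy of that proof must be revisited in each case; one must also confirm that the degenerate ``twig-only'' sub-cases already flagged in the proof of Lemma~\ref{lem:bounce} remain valid when a parallelogram has been shaved down to a thin strip --- which is what happens, for instance, whenever a factor $\varphi_{0}$ occurs and the region collapses to a single row or column (recall Equation~\eqref{eq:phi0}). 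Granting these checks, each case produces the stated Pl\"ucker-like identity, and Lemma~\ref{lem:genBounce} follows.
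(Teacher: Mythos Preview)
The paper takes a different, more uniform route that sidesteps the very difficulty you flag at the end. Rather than a four-way case analysis on the signs of $k-i$ and $k-j$, it embeds $P=[0,r]\times[0,s]$ into the larger rectangle $\{-r-s,\ldots,r\}\times\{-r-s,\ldots,s\}$ so that the base point $(i-k,j-k)$ always lies inside, and applies Lemma~\ref{lem:bounce} there \emph{without modification}: the blue and red families always have sizes $k$ and $k-1$. The new idea is ``$(c,d)$-boundary-hugging'' NILPs --- families in which the leftmost $c$ and rightmost $d$ paths are forced to the extreme routes, hence covering every vertex with a negative coordinate. The paper then argues once that the bounce-path bijection restricts to such families (the horizontal bounce path cannot reach the frozen leftmost blue paths, since red steps go downward and those blue paths slope the wrong way), and since the weight of a configuration depends only on uncovered vertices --- all of which now lie in $P$ --- the shifted-$\varphi$ identity follows uniformly after translating the order filter.

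Your truncation-then-rerun plan is morally dual to this (intersecting with $P$ corresponds to deleting the boundary-hugging paths), but it has a genuine gap at the step where you say the bijection ``applies with only cosmetic changes.'' You yourself identify why: after truncation the blue/red family sizes are $k-M_{00}$ and $k-1-M_{11}$, and these can be equal (when exactly one of $k-i$, $k-j$ is positive) or even reversed (when both are). In those regimes the counting argument of Lemma~\ref{lem:bounce} --- $(k-2)$ twigs leaving exactly one uncovered $\cred{\times}$ at the bottom and one uncovered $\cblu{\circ}$ at the top, forcing one vertical and one horizontal bounce path --- no longer goes through as stated, and you would need a genuinely different endpoint analysis in each case rather than a cosmetic one. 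The paper's device of keeping the full $k$ and $k-1$ paths in the enlarged poset, and proving a single preservation-of-boundary-hugging lemma, is precisely what avoids this case explosion.
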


This result includes Lemma \ref{lem:bounce} as a special case since  $1 \leq k\leq \min \{ i,j \}$ implies that $[k-i]_+ = [k-j]_+ = 0$ and therefore $\left([k-i]_+ + [k-j]_+\right) \leq k \leq (r+s+1)$ immediately holds.  We use Lemma \ref{lem:genBounce} to complete the proof of case (a) of Theorem \ref{thm:main}.  Part (b) is handled by a separate argument.

\begin{proof} 
We prove this more general case by extending the domain where Lemma \ref{lem:bounce} holds and
then specializing to the case we need.  In particular, we extend the rectangular poset
$[0,r]\times [0,s]$ by embedding it inside $\{-r-s,-r-s+1,\dots, r-1, r\} \times
\{-r-s,-r-s+1,\dots, s-1,s\}$.
Inside of this larger rectangular poset, build the order filter with base $(i-k, j-k)$
noting that each or both of these coordinates could now be negative (and therefore would
have been outside the original $[0,r]\times [0,s]$ poset). 

Let $\Phi_k(i-k,j-k)$ denote the set of non-intersecting lattice paths (NILPs) in this order filter.  Following Lemma \ref{lem:bounce}, we have a combinatorial bijection 
$$\Phi_k(i-k,j-k) \times \Phi_{k-1}(i-k+1,j-k+1) \to \Phi_k(i-k+1,j-k) \times \Phi_{k-1}(i-k,j-k+1)$$ 
$$\bigcup \Phi_k(i-k,j-k+1) \times \Phi_{k-1}(i-k+1,j-k)$$
where the right-hand side is a disjoint union.

We let $\Phi_k(a,b)^{(c,d)}$ be shorthand for the subset of NILPs in the order filter based at point $(a,b)$ such that the lattice paths from $\{s_1,s_2,\dots, s_c\}$ to $\{t_1,t_2,\dots, t_c\}$ (ordered left-to-right in their respective ranks of $\{-r-s,-r-s+1,\dots, r\} \times \{-r-s,-r-s+1,\dots,s\}$) each traverse the leftmost possible route in the order filter and the lattice paths from $\{s_{k-d+1}, s_{k-d+2},\dots, s_{k}\}$ to $\{t_{k-d+1}, t_{k-d+2},\dots, t_{k}\}$ traverse the rightmost routes.  We refer to such NILPs as {\bf (c,d)-boundary hugging}.  This notation is well-defined whenever $k \geq c+d$.

Using this notation, we claim that the above restricts to a bijection 
\begin{eqnarray} \label{eq:restrictbounce} \Phi_k(i-k,j-k)^{([k-j]_+,[k-i]_+)} \times \Phi_{k-1}(i-k+1,j-k+1)^{([k-j-1]_+,[k-i-1]_+)} \to 
\\ \nonumber \Phi_k(i-k+1,j-k)^{([k-j]_+,[k-i-1]_+)}\times \Phi_{k-1}(i-k,j-k+1)^{([k-j-1]_+,[k-i]_+)} \\ 
\nonumber \bigcup
\Phi_k(i-k,j-k+1)^{([k-j-1]_+,[k-i]_+)} \times \Phi_{k-1}(i-k+1,j-k)^{([k-j]_+,[k-i-1]_+)}\end{eqnarray}
Note that we have assumed that $k \geq [k-i]_+ + [k-j]_+$, so all six of these sets are well-defined.

To see this, consider Figure \ref{fig:boundary-hugging} (a).  
Consider a pair of NILPs, satisfying the boundary hugging restriction, associated to $ \Phi_k(i-k,j-k)^{([k-j]_+,[k-i]_+)} \times \Phi_{k-1}(i-k+1,j-k+1)^{([k-j-1]_+,[k-i-1]_+)}$.
Without loss of generality, assume in our configuration that the {\bf horizontal bounce path} starts from the {\bf rightmost} $\cblu{\circ}$.  We hence swap the colors on the right but leave them unchanged on the left except for the twigs.  (If the horizontal bounce path starts from the leftmost $\cblu{\circ}$ instead, we use the mirror image.) 

After this swap, we have a configuration that has the form of Figure \ref{fig:boundary-hugging} (b).  
In particular the pattern of upward steps and downward steps starting from the bottom of the horizontal bounce path changes the colors of the rightmost boundary hugging blue and red paths in a predictable way.  Furthermore, the horizontal bounce path cannot reach the leftmost $[k-j]_+$ boundary hugging blue paths since the red steps are downward steps and cannot intersect the blue steps that are pointed down and to the left.  
The resulting configuration after the swap corresponds to a pair of NILPs associated to the product
$\Phi_k(i-k+1,j-k)^{([k-j]_+,[k-i-1]_+)}\times \Phi_{k-1}(i-k,j-k+1)^{([k-j-1]_+,[k-i]_+)}$.  

Furthermore, because we have used boundary hugging paths as constructed above, the only elements of these order filters left uncovered by any of these six sets of NILPs are elements that are in the original $[0,r]\times [0,s]$ poset, i.e., with nonnegative entries for both coordinates.  Consequently, the map defined by (\ref{eq:restrictbounce}) yields a weight-preserving-bijection after weighting NILPs $\mathcal{L}$ by the products of the $A_{cd}$'s for points $(c,d) \in [0,r]\times [0,s]$ left uncovered by $\mathcal{L}$.  
We end up associating the lattice paths in $\Phi_k(i-k,j-k)^{([k-j]_+,[k-i]_+)}$ to an order filter that has the element $(\bfr - [k-j]_+,\bfs-[k-i]_+)$ as its top (rather than $(\bfr,\bfs)$).  

We then obtain Lemma \ref{lem:genBounce} as written by translating the bottom and top of the order filter.  Hence, for each $\epsilon_i,\epsilon_j \in \{0,1\}$, the subset
$\Phi_k(i-k+\epsilon_i,j-k+\epsilon_j)^{([k-j-\epsilon_j]_+,[k-i-\epsilon_i]_+)}$ has 
$$\mu^{([k-j-\epsilon_j]_+,[k-i-\epsilon_i]_+)} \varphi_{k-M_{\epsilon_i \epsilon_j}}(i-k+M_{\epsilon_i \epsilon_j},j-k+M_{\epsilon_i \epsilon_j})$$
as its generating function.
\end{proof}

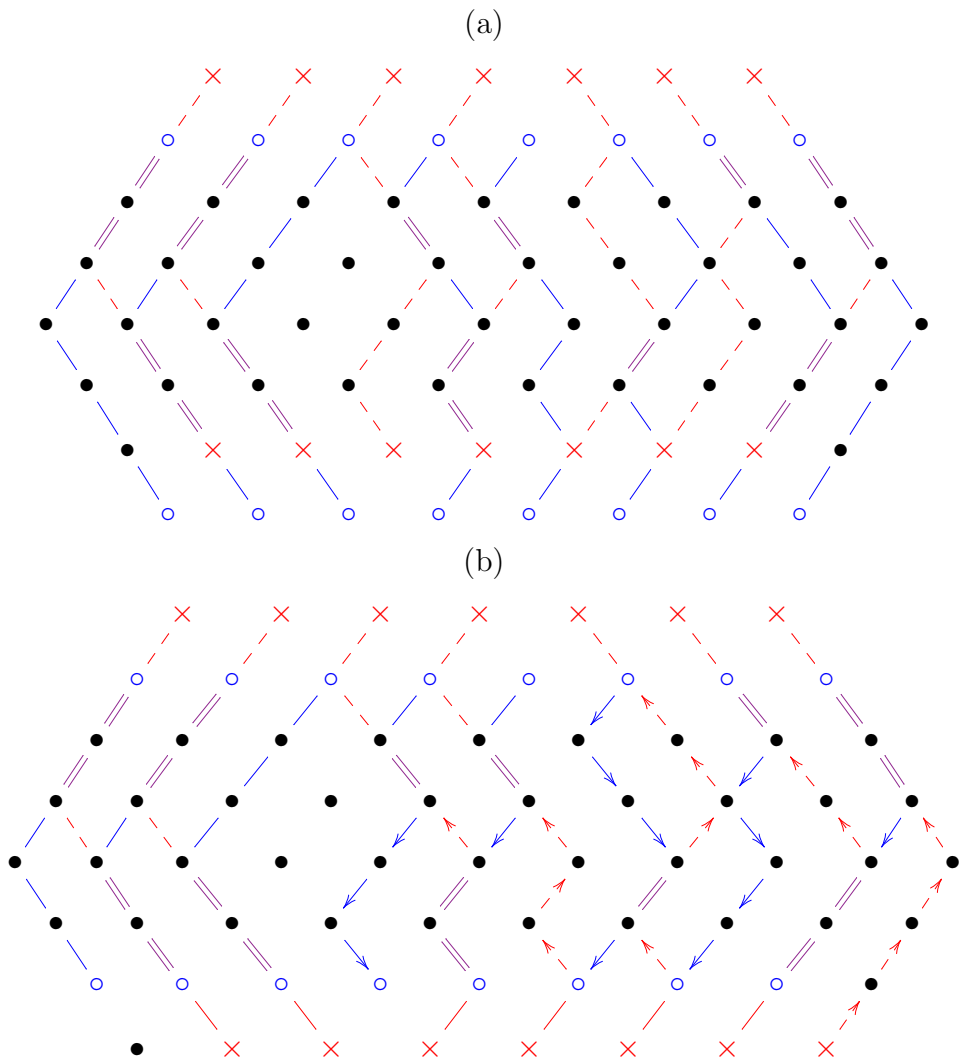
\begin{figure}[h]
(a) \[
\begin{array}{c}
\xymatrixrowsep{0.9pc}\xymatrixcolsep{0.20pc}\xymatrix{
& & & & \cred{\times} \ar@{--}@[red][ld] & & \cred{\times} \ar@{--}@[red][ld] & & \cred{\times} \ar@{--}@[red][ld] & & \cred{\times} \ar@{--}@[red][ld] & & \cred{\times} \ar@{--}@[red][rd] & & \cred{\times} \ar@{--}@[red][rd] & & \cred{\times} \ar@{--}@[red][rd] & & & & \\
& & & \cblu{\circ} \ar@{=}@[Plum][ld] & & \cblu{\circ} \ar@{=}@[Plum][ld] & & \cblu{\circ} \ar@{-}@[blue][ld] \ar@{--}@[red][rd] & & \cblu{\circ} \ar@{-}@[blue][ld] \ar@{--}@[red][rd] & & \cblu{\circ} \ar@{-}@[blue][ld] & & \cblu{\circ} \ar@{--}@[red][ld] \ar@{-}@[blue][rd] & & \cblu{\circ} \ar@{=}@[Plum][rd] & & \cblu{\circ} \ar@{=}@[Plum][rd] & & & \\
& & \bullet \ar@{=}@[Plum][ld]  & & \bullet \ar@{=}@[Plum][ld]  & & \bullet \ar@{-}@[blue][ld]  & & \bullet \ar@{=}@[Plum][rd]  & & \bullet \ar@{=}@[Plum][rd] & & \bullet \ar@{--}@[red][rd] & & \bullet \ar@{-}@[blue][rd] & & \bullet \ar@{--}@[red][ld] \ar@{-}@[blue][rd] & & \bullet \ar@{=}@[Plum][rd] & & \\
& \bullet \ar@{-}@[blue][ld]\ar@{--}@[red][rd]  & & \bullet \ar@{-}@[blue][ld]\ar@{--}@[red][rd]  & & \bullet \ar@{-}@[blue][ld] & & \bullet \ar@{-}@[white][ld] & & \bullet \ar@{--}@[red][ld] \ar@{-}@[blue][rd] & & \bullet \ar@{--}@[red][ld] \ar@{-}@[blue][rd] & & \bullet \ar@{--}@[red][rd] & & \bullet \ar@{-}@[blue][ld] \ar@{--}@[red][rd] & & \bullet \ar@{-}@[white][ld] \ar@{-}@[blue][rd] & & \bullet \ar@{--}@[red][ld] \ar@{-}@[blue][rd] & \\
\bullet \ar@{-}@[blue][rd] & & \bullet \ar@{=}@[Plum][rd] & &  \bullet \ar@{=}@[Plum][rd] & &  \bullet \ar@{-}@[white][rd] & & \bullet \ar@{--}@[red][ld]  & & \bullet \ar@{=}@[Plum][ld] & & \bullet \ar@{-}@[blue][ld] & & \bullet \ar@{=}@[Plum][ld] & & \bullet \ar@{--}@[red][ld]\ar@{-}@[white][rd]& & \bullet \ar@{=}@[Plum][ld]& & \bullet \ar@{-}@[blue][ld] \\
& \bullet \ar@{-}@[blue][rd] & & \bullet \ar@{=}@[Plum][rd] & & \bullet \ar@{=}@[Plum][rd] & & \bullet \ar@{--}@[red][rd] & & \bullet \ar@{=}@[Plum][rd] & & \bullet \ar@{-}@[blue][rd] & & \bullet \ar@{--}@[red][ld]\ar@{-}@[blue][rd] & & \bullet \ar@{--}@[red][ld] & & \bullet \ar@{=}@[Plum][ld] & & \bullet \ar@{-}@[blue][ld] & \\
& & \bullet \ar@{-}@[blue][rd] & & \cred{\times}\ar@{-}@[blue][rd] & & \cred{\times}\ar@{-}@[blue][rd] & & \cred{\times} \ar@{-}@[white][ld] & & \cred{\times} \ar@{-}@[blue][ld] & & \cred{\times} \ar@{-}@[blue][ld] & & \cred{\times} \ar@{-}@[blue][ld] & & \cred{\times} \ar@{-}@[blue][ld] & & \bullet \ar@{-}@[blue][ld] & & \\ 
& & & \cblu{\circ} & & \cblu{\circ} & & \cblu{\circ} & & \cblu{\circ} & & \cblu{\circ} & & \cblu{\circ} & & \cblu{\circ} & & \cblu{\circ} & & &
}
\end{array}
\]

(b) \[
\begin{array}{c}
\xymatrixrowsep{0.9pc}\xymatrixcolsep{0.20pc}\xymatrix{
& & & & \cred{\times} \ar@{--}@[red][ld] & & \cred{\times} \ar@{--}@[red][ld] & & \cred{\times} \ar@{--}@[red][ld] & & \cred{\times} \ar@{--}@[red][ld] & & \cred{\times} \ar@{--}@[red][rd] & & \cred{\times} \ar@{--}@[red][rd] & & \cred{\times} \ar@{--}@[red][rd] & & & & \\
& & & \cblu{\circ} \ar@{=}@[Plum][ld] & & \cblu{\circ} \ar@{=}@[Plum][ld] & & \cblu{\circ} \ar@{-}@[blue][ld] \ar@{--}@[red][rd] & & \cblu{\circ} \ar@{-}@[blue][ld] \ar@{--}@[red][rd] & & \cblu{\circ} \ar@{-}@[blue][ld] & & \cblu{\circ} \ar@{->}@[blue][ld] \ar@{<--}@[red][rd] & & \cblu{\circ} \ar@{=}@[Plum][rd] & & \cblu{\circ} \ar@{=}@[Plum][rd] & & & \\
& & \bullet \ar@{=}@[Plum][ld]  & & \bullet \ar@{=}@[Plum][ld]  & & \bullet \ar@{-}@[blue][ld]  & & \bullet \ar@{=}@[Plum][rd]  & & \bullet \ar@{=}@[Plum][rd] & & \bullet \ar@{->}@[blue][rd] & & \bullet \ar@{<--}@[red][rd] & & \bullet \ar@{->}@[blue][ld] \ar@{<--}@[red][rd] & & \bullet \ar@{=}@[Plum][rd] & & \\
& \bullet \ar@{-}@[blue][ld]\ar@{--}@[red][rd]  & & \bullet \ar@{-}@[blue][ld]\ar@{--}@[red][rd]  & & \bullet \ar@{-}@[blue][ld] & & \bullet \ar@{-}@[white][ld] & & \bullet \ar@{->}@[blue][ld] \ar@{<--}@[red][rd] & & \bullet \ar@{->}@[blue][ld] \ar@{<--}@[red][rd] & & \bullet \ar@{->}@[blue][rd] & & \bullet \ar@{<--}@[red][ld] \ar@{->}@[blue][rd] & & \bullet \ar@{-}@[white][ld] \ar@{<--}@[red][rd] & & \bullet \ar@{->}@[blue][ld] \ar@{<--}@[red][rd] & \\
\bullet \ar@{-}@[blue][rd] & & \bullet \ar@{=}@[Plum][rd] & &  \bullet \ar@{=}@[Plum][rd] & &  \bullet \ar@{-}@[white][rd] & & \bullet \ar@{->}@[blue][ld]  & & \bullet \ar@{=}@[Plum][ld] & & \bullet \ar@{<--}@[red][ld] & & \bullet \ar@{=}@[Plum][ld] & & \bullet \ar@{->}@[blue][ld]\ar@{-}@[white][rd]& & \bullet \ar@{=}@[Plum][ld]& & \bullet \ar@{<--}@[red][ld] \\
& \bullet \ar@{-}@[blue][rd] & & \bullet \ar@{=}@[Plum][rd] & & \bullet \ar@{=}@[Plum][rd] & & \bullet \ar@{->}@[blue][rd] & & \bullet \ar@{=}@[Plum][rd] & & \bullet \ar@{<--}@[red][rd] & & \bullet \ar@{->}@[blue][ld]\ar@{<--}@[red][rd] & & \bullet \ar@{->}@[blue][ld] & & \bullet \ar@{=}@[Plum][ld] & & \bullet \ar@{<--}@[red][ld] & \\
& & \cblu{\circ}  & &\cblu{\circ}\ar@{-}@[red][rd] & &\cblu{\circ}\ar@{-}@[red][rd] & &\cblu{\circ} \ar@{-}@[white][ld] & &\cblu{\circ} \ar@{-}@[red][ld] & &\cblu{\circ} \ar@{-}@[red][ld] & &\cblu{\circ} \ar@{-}@[red][ld] & &\cblu{\circ} \ar@{-}@[red][ld] & & \bullet \ar@{<--}@[red][ld] & & \\ 
& & & \bullet  & &  \cred{\times} & &  \cred{\times} & &  \cred{\times} & &  \cred{\times} & &  \cred{\times} & &  \cred{\times} & &  \cred{\times} & & &
}
\end{array}
\]

\caption{(a) Illustrating a pair of NILPs which are {\color{blue}$(3,2)$-boundary hugging} and {\color{red}$(2,1)$-boundary hugging}, respectively. (b) After applying our bijection, we have {\color{blue}$(3,1)$-boundary hugging NILPs} and {\color{red}$(2,2)$-boundary hugging NILPs}.}
\label{fig:boundary-hugging}
\end{figure}

\begin{eg}
We consider the NILPs illustrated in Figure \ref{fig:boundary-hugging} (a).  In this  case, $[k-j]_+ = 3$ and $[k-i]_+ = 2$ so that we have $(3,2)$-boundary hugging blue paths and $(2,1)$-boundary hugging red paths.  
In this example, the {\bf horizontal bounce path} starts from the {\bf rightmost} $\cblu{\circ}$ (as opposed to the leftmost $\cblu{\circ}$).
After swapping colors along the horizontal bounce path, we get the NILPs of Figure
\ref{fig:boundary-hugging} (b) with $(3,1)$-boundary hugging blue paths, which are
left-justified, and $(2,2)$-boundary hugging red paths, which are right-justified.  
\end{eg}

\vspace{-1em}

\subsection{Proof of Theorem \ref{thm:main} (a)}

We consider the off-boundary case where $(i,j)$ both covers and is covered by two elements
of $P$. 
  Under this hypothesis, we have the following identity by the definition of birational rowmotion:
$$\rho_B^{k+1}(i,j) = \frac{\bigg(\rho_B^k(i,j-1) + \rho_B^k(i-1,j)\bigg) \bigg( \rho_B^{k+1}(i+1,j) \parallel \rho_B^{k+1}(i,j+1) \bigg)}{\rho_B^k(i,j)}.$$

By induction on $k$, and the fact that we apply birational rowmotion from top to bottom, we can rewrite this formula as  
$$\frac{ \bigg(\frac{A}{B} + \frac{C}{D}\bigg) \bigg( 
\frac{B}{G} \parallel \frac{D}{H}\bigg)}{\frac{E}{F}} = 
\frac{ \bigg(\frac{A}{B} + \frac{C}{D}\bigg) \bigg( 
\frac{BD}{DG ~+~ BH}\bigg)}{\frac{E}{F}}$$
$$= \bigg(\frac{AD + BC}{BD}\bigg) \bigg( 
\frac{BD}{DG ~+~ BH}\bigg)\bigg(\frac{F}{E}\bigg) 
= \frac{D\frac{A}{E} + B\frac{C}{E}}{D\frac{G}{F} + B\frac{H}{F}} $$

where

$$A = \mu^{([k-j]_+,
[k-1-i]_+)}\varphi_{k-1-M_{10}}(i-k+1+M_{10},j-k+M_{10})$$

$$B = \mu^{([k-j]_+,
[k-1-i]_+)}\varphi_{k-M_{10}}(i-k+1+M_{10},j-k+M_{10})$$

$$C = \mu^{([k-1-j]_+,
[k-i]_+)}\varphi_{k-1-M_{01}}(i-k+M_{01},j-k+1+M_{01})$$

$$D = \mu^{([k-i]_+,
[k-1-j]_+)}\varphi_{k-M_{01}}(i-k+M_{01},j-k+1+M_{01})$$

$$E = \mu^{([k-1-j]_+,
[k-1-i]_+)}\varphi_{k-1-M_{11}}(i-k+1+M_{11},j-k+1+M_{11})$$

$$F = \mu^{([k-1-j]_+,
[k-1-i]_+)}\varphi_{k-M_{11}}(i-k+1+M_{11},j-k+1+M_{11})$$

$$G = \mu^{([k-j]_+,
[k-1-i]_+)}\varphi_{k+1-M_{10}}(i-k+1+M_{10},j-k+M_{10})$$

$$H = \mu^{([k-1-j]_+,
[k-i]_+)}\varphi_{k+1-M_{01}}(i-k+M_{01},j-k+1+M_{01})$$
using $M_{\epsilon_i, \epsilon_j} = [k-i-\epsilon_i]_+ + [k-j-\epsilon_j]_+$ for $\epsilon_i, \epsilon_j \in \{0,1\}$.

To prove Theorem \ref{thm:main} (a), it therefore suffices to prove, using this above shorthand, that
$$D\frac{A}{E} + B\frac{C}{E} = \mu^{([k-j]_+,
[k-i]_+)}\varphi_{k-M_{00}}(i-k+M_{00},j-k+M_{00})$$
and 
$$D\frac{G}{F} + B\frac{H}{F} = \mu^{([k-j]_+,
[k-i]_+)}\varphi_{k+1-M_{00}}(i-k+M_{00},j-k+M_{00}).$$

Letting $\alpha = \mu^{([k-j]_+,
[k-i]_+)}\varphi_{k-M_{00}}(i-k+M_{00},j-k+M_{00})$ and \\ $\beta = \mu^{([k-j]_+,
[k-i]_+)}\varphi_{k+1-M_{00}}(i-k+M_{00},j-k+M_{00})$, 
we note that these two equations, i.e., 
$$\alpha E = BC + DA \mathrm{~~and~~} \beta F = GD + HB,$$
both follow from two applications\footnote{
The second equation follows from substituting $k$ by $k+1$, $i$ by $i+1$, and $j$ by $j+1$.} of Lemma \ref{lem:genBounce}.

\begin{rem}
The proof is analogous in cases where the element $(i,j)$ covers (or is covered by) only a
single element, with some of the terms in the above expression being replaced with a $1$ or a $0$.  
\end{rem}

\subsection{Proof of Theorem \ref{thm:main} (b)}

Before continuing with the proof in the case when $M> k$, we note the following simplified
formula in the special case when $M=k$. 

\begin{claim} \label{claim:Mk}
Under the hypotheses of Theorem \ref{thm:main}, if $M=k$ (i.e., $i+j=k$)
then $$\rho_B^{k+1}(i,j) = \mu^{(i,j)} \left(\frac{\varphi_{0}(i,j)}{\varphi_{1}(i,j)}\right)
= \mu^{(i,j)} \rho_B^1(i,j) = \frac{1}{x_{r-i,s-j}}.$$ 
\end{claim}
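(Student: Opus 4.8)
The statement is a chain of three equalities, and the plan is to dispatch them in order; only the last requires genuine work. For the first equality, $\rho_B^{k+1}(i,j)=\mu^{(i,j)}\!\left(\varphi_0(i,j)/\varphi_1(i,j)\right)$, I would simply specialize part (a) of Theorem~\ref{thm:main} (already proven) to the case $i+j=k$: since $(i,j)\in[0,r]\times[0,s]$, the hypothesis $i+j=k$ forces $k-i=j\ge 0$ and $k-j=i\ge 0$, hence $[k-i]_+=j$, $[k-j]_+=i$, and $M=i+j=k$, so substituting $M=k$ into Equation~\eqref{eq:main1} collapses its indices to $k-M=0$, $i-k+M=i$, $j-k+M=j$, and its shift exponent to $(i,j)$. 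The second equality, $\mu^{(i,j)}\!\left(\varphi_0(i,j)/\varphi_1(i,j)\right)=\mu^{(i,j)}\rho_B^1(i,j)$, is then immediate from the $k=0$ case of the theorem, Equation~\eqref{eq:k0}, which identifies $\rho_B^1(i,j)$ with $\varphi_0(i,j)/\varphi_1(i,j)$: I just apply $\mu^{(i,j)}$ to both sides. Since $\mu^{(i,j)}$ is merely a reindexing of the (algebraically independent) $A$-variables, it is a field isomorphism onto the subfield generated by the shifted $A$'s, and in particular commutes with sums, products, and reciprocals — a fact I use below.

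The substance of the claim is therefore the third equality, $\mu^{(i,j)}\rho_B^1(i,j)=1/x_{r-i,s-j}$, and I would prove it directly from the parallel-sum path formula of Equation~\eqref{eq:k0}, namely $\rho_B^1(i,j)=\bigl(\sum_{L}\prod_{(p,q)\in L}A_{pq}^{-1}\bigr)^{-1}$ with the sum over lattice paths $L$ from $(i,j)$ up to $(r,s)$. Pushing $\mu^{(i,j)}$ through (using the previous paragraph) and reindexing each path by the translation $(p,q)\mapsto(p-i,q-j)$ — a bijection between lattice paths $(i,j)\to(r,s)$ and lattice paths $(0,0)\to(r-i,s-j)$ — turns this into $\mu^{(i,j)}\rho_B^1(i,j)=\bigl(\sum_{L'}\prod_{(p,q)\in L'}A_{pq}^{-1}\bigr)^{-1}$ with the sum over lattice paths $L'$ from $(0,0)$ to $(r-i,s-j)$. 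So it suffices to show that
\[
S(p,q):=\sum_{L':\,(0,0)\to(p,q)}\ \prod_{(u,v)\in L'}A_{uv}^{-1}=x_{pq}
\]
for all $(p,q)$ with $0\le p\le r-i$ and $0\le q\le s-j$, since then the displayed denominator equals $S(r-i,s-j)=x_{r-i,s-j}$.

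I would prove $S(p,q)=x_{pq}$ by induction on $p+q$. The base case is $S(0,0)=A_{00}^{-1}=x_{00}$, using the convention $A_{00}=1/x_{00}$ recorded just after Equation~\eqref{eq:Aij}. For the inductive step, splitting a path to $(p,q)$ according to its penultimate vertex yields the recurrence $S(p,q)=\bigl(S(p-1,q)+S(p,q-1)\bigr)A_{pq}^{-1}$, where $S(p-1,q)$ is dropped if $p=0$ and $S(p,q-1)$ is dropped if $q=0$; substituting $A_{pq}=(x_{p,q-1}+x_{p-1,q})/x_{pq}$ for $p,q\ge 1$ collapses this to $S(p,q)=x_{pq}$, and the boundary conventions $A_{p0}=x_{p-1,0}/x_{p0}$, $A_{0q}=x_{0,q-1}/x_{0q}$ do the same along the two edges. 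Assembling the three equalities then gives $\rho_B^{k+1}(i,j)=1/x_{r-i,s-j}$.

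The only real obstacle is the bookkeeping: one must track the boundary conventions for $A_{p0}$, $A_{0q}$, $A_{00}$ carefully in the telescoping, and make sure that the reindexing of lattice paths exactly matches the index shift performed by $\mu^{(i,j)}$. There is no circularity here — part (a) of Theorem~\ref{thm:main} and the $k=0$ case of Equation~\eqref{eq:k0} are both already established, and this claim will only be invoked afterward, in the proof of part (b).
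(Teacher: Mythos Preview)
Your proposal is correct and follows essentially the same route as the paper. The paper handles the first two equalities identically (by specializing part~(a) and the $k=0$ formula), then reduces the third equality to the case $i=j=0$ in the smaller rectangle $[0,r-i]\times[0,s-j]$ via the isomorphism of the order filter $\bigvee_{(i,j)}$ with that rectangle, and finishes with the same telescoping induction on lattice paths that you carry out as $S(p,q)=x_{pq}$; your explicit translation of paths by $(p,q)\mapsto(p-i,q-j)$ is just a more hands-on rendering of that isomorphism.
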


\begin{proof} The first two equalities follow from Theorem \ref{thm:main} (a),
while we prove the last equality as follows.  Since the principal order filter
$\bigvee_{(i,j)}$ is isomorphic to the product of chains $[0,r-i]\times [0,r-j]$, we easily
reduce the claim to the case
$i=j=k=0$, i.e., it suffices to show the last equality of 
\[
\rho_B^1(0,0) = \frac{\varphi_{0}(0,0)}{\varphi_{1}(0,0)} =
\frac{\prod_{p=0}^r \prod_{q=0}^s A_{pq}}
{\sum_{ \calL  \in S_{1}(0,0)}
\hspace{2em}\prod_{\stackrel{(i,j) \in \lpreg_{(0,0)}^{1}}{(i,j) \not \in L_1}} A_{ij}. 
} = \frac{1}{x_{r,s}}. 
\] 
In this situation, our family of lattice paths reduces to a single lattice path $L_{1}$,
the numerator can be thought of as $\ds \prod_{(p,q)\in P}A_{pq}$, and $\lpreg_{(0,0)}^{1} =
P$ as well.  By clearing denominators and dividing through by the double-product we
equivalently need to show the following: 
\begin{claim} \label{claim:00}
\[
\sum_{ \calL  \in S_{1}(0,0)}
\kern 1em \prod_{{(i,j) \in L_1}} A_{ij}^{-1} = x_{r,s}. 
\]
\end{claim}
\textit{Proof.}
For the base case $s=0$, we get that $P$ is a chain of length $r$ and the only lattice path
consists of
every element of $P$.  In this case $A_{i0} = \frac{x_{i,0}}{x_{i-1,0}}$ for $i\in [r]$, with
$A_{00} = x_{00}$, so the single summand is the telescoping product 
\[
\frac{x_{0,0}}{1}\cdot \frac{x_{1,0}}{x_{0,0}}\cdot \frac{x_{2,0}}{x_{1,0}}\cdot 
\dotsb  \frac{x_{r,0}}{x_{r-1,0}} = x_{r,0}
\]
as required.  Symmetrically, the claim also holds for $r=0$ and any $s$.  Now suppose that
$rs>0$ and that
the claim holds for every rectangular poset whose dimensions are strictly smaller than
$[0,r]\times [0,s]$.  Set $\frakL(p,q): = \{\text{lattice paths from } (0,0)\text{ to }
(p,q)\}$.   Any lattice path
from $(0,0)$ to $(r,s)$ must go through either $(r-1,s)$
or $(r,s-1)$.  Thus,
\begin{align*}
\sum_{ \calL  \in S_{1}(0,0)} \kern 1em \prod_{{(i,j) \in L_1}} A_{ij}^{-1} 
& = A_{r,s}^{-1} \sum_{L\in \frakL (r-1,s)} \kern 1em \prod_{{(i,j) \in L}} A_{ij}^{-1} 
    +A_{r,s}^{-1} \sum_{L\in \frakL (r,s-1)} \kern 1em \prod_{{(i,j) \in L}} A_{ij}^{-1}\\ 
& = A_{r,s}^{-1} (x_{r-1,s} + x_{r,s-1})\\ 
&= x_{r,s}, 
\end{align*}
using the induction hypothesis and the definition of $A_{i,j}$.  This finishes the proofs of
both Claim~\ref{claim:00} and Claim~\ref{claim:Mk}.  
\end{proof}

We next consider the case when $M = k+1$ (i.e., $i+j = k-1$).  We start with the degenerate case
$\rho_B^2(0,0) =   \frac{\rho_B^{2}(1,0) \parallel \rho_B^{2}(0,1)}{\rho_B^1(0,0)}
= \frac{\frac{1}{x_{r-1,s}} \parallel \frac{1}{x_{r,s-1}} }{\frac{1}{x_{rs}}} = \frac{ \frac{1}{x_{r-1,s}+x_{r,s-1}}}{\frac{1}{x_{rs}}}=
 \frac{1}{A_{rs}}.$  Note here that we used Claim \ref{claim:Mk} to simplify the calculations.
Continuing by induction\footnote{As above, if $(i,j)$ only covers one element, we have a single summand rather than two inside the left parenthesis.},
$$\rho_B^{k+1}(i,j) = \frac{\bigg(\rho_B^k(i,j-1) + \rho_B^k(i-1,j)\bigg)  \bigg( \rho_B^{k+1}(i+1,j) \parallel \rho_B^{k+1}(i,j+1) \bigg)}{\rho_B^k(i,j)}.$$
Using Claim \ref{claim:Mk} and the inductive hypothesis, the right-hand side simplifies to 
$$\frac{\bigg(\frac{1}{\rho_B^{k-1-(i+j-1)}(r-i,s-j+1)} + \frac{1}{\rho_B^{k-1-(i+j-1)}(r-i+1,s-j)} \bigg) 
 \bigg( 1/x_{r-i-1,s-j} \parallel 1/x_{r-i,s-j-1} \bigg)}{1/x_{r-i,s-j}}
=$$ $$\bigg(\frac{1}{\rho_B^{1}(r-i,s-j+1)} + \frac{1}{\rho_B^{1}(r-i+1,s-j)} \bigg)
 \frac{1}{A_{r-i,s-j}}.$$
Using Equation (\ref{eq:k0}), we can expand this out further as (assuming $i+j=k-1$)
$$\rho_B^{k+1}(i,j) = \bigg(
\sum_{\mathrm{~paths}~L~\text{from}~(r-i,s-j+1)} \frac{1}{\prod_{(p,q)\in L} A_{pq}}
~~~ + \sum_{\mathrm{~paths}~L~\text{from}~(r-i+1,s-j)} \frac{1}{\prod_{(p,q)\in L} A_{pq}}\bigg) \frac{1}{A_{r-i,s-j}}.$$
Any lattice path connecting $(r-i,s-j)$ to $(r,s)$ either goes through $(r-i,s-j+1)$ or
through $(r-i+1,s-j)$.  Combining these into a single sum over lattice paths, we get 
$$\rho_B^{k+1}(i,j)  = \sum_{\mathrm{~paths}~L~\text{from}~(r-i,s-j)} \frac{1}{\prod_{(p,q)\in L} A_{pq}} = \frac{\varphi_1(r-i,s-j)}{\varphi_0(r-i,s-j)} =  \frac{1}{\rho_B^{1}(r-i,s-j)},$$ 
agreeing with Theorem \ref{thm:main} (b) when $k=i+j+1$.

Lastly, when $M > k+1$, we use Theorem \ref{thm:main} (b) inductively to obtain
$$\rho_B^{k+1}(i,j) = \frac{\bigg(\rho_B^k(i,j-1) + \rho_B^k(i-1,j)\bigg)  \bigg( \rho_B^{k+1}(i+1,j) \parallel \rho_B^{k+1}(i,j+1) \bigg)}{\rho_B^k(i,j)}$$
$$ = \frac{\bigg(\frac{1}{\rho_B^{k-i-j}(r-i,s-j+1)} + \frac{1}{\rho_B^{k-i-j}(r-i+1,s-j)}\bigg)  \bigg( \frac{1}{\rho_B^{k-1-i-j}(r-i-1,s-j)} \parallel \frac{1}{\rho_B^{k-1-i-j}(r-i,s-j-1)} \bigg)}
{\frac{1}{\rho_B^{k-1-i-j}(r-i,s-j)}}$$
$$ = \Bigg(\frac{\bigg(\rho_B^{K+1}(I,J+1) \parallel \rho_B^{K+1}(I+1,J)\bigg) \bigg(\rho_B^{K}(I-1,J) + \rho_B^{K}(I,J-1)\bigg)}{\rho_B^{K}(I,J)} \Bigg)^{-1} = \frac{1}{\rho_B^{K+1}(I,J)}$$
where $K = k-1-i-j$, $I = r-i$, $J=s-j$.  This finishes the proof.

\subsection{Proof of file homomesy}\label{ss:fileProof}

In this section we use our main theorem to prove the file-homomesy result,
Theorem~\ref{thm:homomesy}.  The proof is a mixture of straighforward cancellations directly
from our formula and some subtle recombinations of terms, leading to a double-counting
argument to show two products are equal.  We start with an illustrative example that shows
the initial cancellations.   

\begin{eg}\label{eg:fileTable}
Let $(r,s) = (4,3)$, and $d=2$, with corresponding file $F = \{(4,2), (3,1), (2,0) \}$.  The
following table displays the values (in terms of the $\varphi$-polynomials) taken on by each
element of the file across 
a $\rho_{B}$-period.

\[
\setlength\arraycolsep{1em}
\def\arraystretch{2.5}
\begin{array}{c|c c c}
&		  (4,2)&		  (3,1)&	(2,0)\\ 
\hline
k=0&\dfrac{\varphi_{0}(4,2)}{\varphi_{1}(4,2)} & \dfrac{\varphi_{0}(3,1)}{\cred{\varphi_{1}(3,1)}} & \dfrac{\varphi_{0}(2,0)}{\cblu{\varphi_{1}(2,0)}}\\ 
k=1&\dfrac{\cred{\varphi_{1}(3,1)}}{\varphi_{2}(3,1)} & \dfrac{\cblu{\varphi_{1}(2,0)}}{\cred{\varphi_{2}(2,0)}} & \mu^{(1,0)}\left[\dfrac{\varphi_{0}(2,0)}{\cblu{\varphi_{1}(2,0)}}\right]\\
k=2&\dfrac{\cred{\varphi_{2}(2,0)}}{\varphi_{3}(2,0)} & \mu^{(1,0)}\left[\dfrac{\cblu{\varphi_{1}(2,0)}}{\cred{\varphi_{2}(2,0)}}\right] & \mu^{(2,0)}\left[\dfrac{\varphi_{0}(2,0)}{\cblu{\varphi_{1}(2,0)}}\right] = \frac{1}{x_{23}}\\ 
k=3&\mu^{(1,0)}\left[\dfrac{\cred{\varphi_{2}(2,0)}}{\varphi_{3}(2,0)}\right] & \mu^{(2,0)}\left[\dfrac{\cblu{\varphi_{1}(2,0)}}{\cred{\varphi_{2}(2,0)}}\right]&\dfrac{\varphi_{1}(2,3)}{\varphi_{0}(2,3)}\\ 
k=4& \mu^{(2,0)}\left[\dfrac{\cred{\varphi_{2}(2,0)}}{\varphi_{3}(2,0)}\right]&\mu^{(3,1)}\left[\dfrac{\varphi_{0}(3,1)}{\cred{\varphi_{1}(3,1)}}\right] = \frac{1}{x_{12}}&\dfrac{\varphi_{2}(1,2)}{\cgrn{\varphi_{1}(1,2)}}\\
k=5& \mu^{(3,1)}\left[\dfrac{\cred{\varphi_{1}(3,1)}}{\varphi_{2}(3,1)}\right]&\dfrac{\cgrn{\varphi_{1}(1,2)}}{\varphi_{0}(1,2)}&\dfrac{\varphi_{3}(0,1)}{\cgrn{\varphi_{2}(0,1)}}\\ 
k=6& \mu^{(4,2)}\left[\dfrac{\varphi_{0}(4,2)}{\varphi_{1}(4,2)}\right] = \frac{1}{x_{01}}&\dfrac{\cgrn{\varphi_{2}(0,1)}}{\cblu{\varphi_{1}(0,1)}}&\mu^{(0,1)}\left[\dfrac{\varphi_{3}(0,1)}{\cgrn{\varphi_{2}(0,1)}}\right]\\ 
k=7& \dfrac{\cblu{\varphi_{1}(0,1)}}{\varphi_{0}(0,1)}&\mu^{(0,1)}\left[\dfrac{\cgrn{\varphi_{2}(0,1)}}{\cblu{\varphi_{1}(0,1)}}\right]&\mu^{(1,2)}\left[\dfrac{\varphi_{2}(1,2)}{\cgrn{\varphi_{1}(1,2)}}\right]\\ 
k=8& \mu^{(0,1)}\left[\dfrac{\cblu{\varphi_{1}(0,1)}}{\varphi_{0}(0,1)}\right] = x_{42}&\mu^{(1,2)}\left[\dfrac{\cgrn{\varphi_{1}(1,2)}}{\varphi_{0}(1,2)}\right] = x_{31}&\mu^{(2,3)}\left[\dfrac{\varphi_{1}(2,3)}{\varphi_{0}(2,3)}\right] = x_{20}\\ 
\end{array}
\]

We color code entries in \cred{red}, \cblu{blue}, and \cgrn{green} to pair numerators of one entry which agree with
denominators of another entry, hence cancelling in the product of all values.  
The remaining entries either are equal to $1$ or cancel each other out, as handled below.  

For convenience we record them here, listing them down columns from
left-to-right: $$\big[\varphi_0(4,2) \varphi_1(4,2)^{-1}  \varphi_2(3,1)^{-1} \varphi_3(2,0)^{-1}
\mu^{(1,0)} \varphi_3(2,0)^{-1}  \mu^{(2,0)} \varphi_3(2,0)^{-1} \mu^{(3,1)}  
\varphi_2(3,1)^{-1}  \mu^{(4,2)}\varphi_0(4,2)$$  $$\mu^{(4,2)} \varphi_1(4,2)^{-1}
\varphi_0(0,1)^{-1} \mu^{(0,1)} \varphi_0(0,1)^{-1} \big] \cdot 
\big[\varphi_0(3,1)  \mu^{(3,1)} \varphi_0(3,1) \varphi_0(1,2)^{-1} \mu^{(1,2)} \varphi_0(1,2)^{-1}\big]\cdot $$ 
$$\big[\varphi_0(2,0) \mu^{(1,0)} \varphi_0(2,0) 
\mu^{(2,0)} \varphi_0(2,0) \varphi_1(2,3) \varphi_0(2,3)^{-1} \varphi_2(1,2) \varphi_3(0,1) \mu^{(0,1)} \varphi_3(0,1)  \mu^{(1,2)} \varphi_2(1,2)$$ 
$\mu^{(2,3)} \varphi_1(2,3) \mu^{(2,3)} \varphi_0(2,3)^{-1}\big]$.

\end{eg}

\begin{proof} [Proof of Theorem \ref{thm:homomesy}]

Continuing with the assumption that $\bfr \geq \bfs$, we start with the case $d < \bfs \leq \bfr$ and consider iterations of birational rowmotion applied to the file $\{(\bfr-c,d-c)\}_{c=0}^d$, i.e., to 
$\{(\bfr,d), (\bfr-1,d-1), \dots, (\bfr-d,0)\}$.  From Theorem \ref{thm:main} (a), we obtain the following values for 
$\rho_B^{k+1}(\bfr-c,d-c)$:
$$\frac{ \varphi_{k}(\bfr-c-k,d-c-k)}{\varphi_{k+1}(\bfr-c-k,d-c-k)} \mathrm{~~~for~~~} 0 \leq k \leq d-c,$$
$$\mu^{(k+c-d,0)}\bigg[\frac{ \varphi_{d-c}(\bfr-d,0)}{\varphi_{d-c+1}(\bfr-d,0)}\bigg] \mathrm{~~~for~~~} d-c \leq k \leq \bfr-c,$$
$$\mu^{(k+c-d,k+c-\bfr)}\bigg[\frac{ \varphi_{d+\bfr-k-2c}(k+c-d,k+c-\bfr)}{\varphi_{d+\bfr-k-2c+1}(k+c-d,k+c-\bfr)}\bigg] \mathrm{~~~for~~~} \bfr-c \leq k \leq \bfr+d-2c.$$
And we continue using Theorem \ref{thm:main} (b) to obtain further values for $\rho_B^{k+1}(\bfr-c,d-c)$:
$$\frac{ \varphi_{k+2c-\bfr-d}(\bfr+d+1-k-c,\bfr+\bfs+1-k-c)}
{\varphi_{k+2c-\bfr-d-1}(\bfr+d+1-k-c,\bfr+\bfs+1-k-c)} \mathrm{~~~for~~~} \bfr+d+1-2c \leq k \leq \bfr+d+1-c,$$
$$\mu^{(0,k+c-\bfr-d-1)}\bigg[\frac{ \varphi_{c+1}(0,\bfs-d)}{\varphi_{c}(0,\bfs-d)}\bigg] \mathrm{~~~for~~~} \bfr+d+1-c \leq k \leq \bfr+\bfs+1-c,$$
{\scriptsize $$\mu^{(k+c-\bfr-\bfs-1,k+c-\bfr-d-1)}\bigg[\frac{ \varphi_{\bfr+\bfs+2-k}(k+c-\bfr-\bfs-1, k+c-\bfr-d-1)}
{\varphi_{\bfr+\bfs+1-k}(k+c-\bfr-\bfs-1, k+c-\bfr-d-1)}\bigg] \mathrm{~~~for~~~} \bfr+\bfs+1-c \leq k \leq \bfr+\bfs+1.$$}
Multiplying together these values over all elements in this file and for $0 \leq k \leq \bfr+\bfs+1$, many of these numerators and denominators cancel as we saw in Example \ref{eg:fileTable}.  In particular, generically, the numerator of $\rho_B^{k+1}(\bfr-c,d-c)$ cancels 
with the denominator of $\rho_B^{k}(\bfr-c-1,d-c-1)$.  After these cancellations, we are left with the product of the following contributions:

\begin{equation}\label{eq:F1}
\bigg(\prod_{c=0}^{d-1} \varphi_{0}(\bfr-c,d-c) \bigg) 
\bigg(\prod_{c=0}^{d-1} \mu^{(\bfr-c,d-c)}\bigg[\varphi_{0}(\bfr-c,d-c)\bigg] \bigg)
\bigg(\prod_{k=0}^{\bfr-d} \mu^{(k,0)}\bigg[\varphi_0(\bfr-d,0)\bigg] \bigg),
\end{equation} 
\begin{equation}\label{eq:F2}
\bigg(\prod_{c=1}^{d} \varphi_{0}(c,\bfs-d+c) \bigg)^{-1}
\bigg(\prod_{c=1}^{d} \mu^{(c,\bfs-d+c)}\bigg[\varphi_{0}(c,\bfs-d+c))\bigg] \bigg)^{-1}
\bigg(\prod_{j=0}^{\bfs-d} \mu^{(0,j)}\bigg[\varphi_0(0,\bfs-d)\bigg] \bigg)^{-1},
\end{equation}
%%%%%%%%%%%%%%%%%%%%%%%%%%%%%%%%%%%%%%%%%%%%%%%%%%%%%%%%%%%%%%%%%%%%%%%%%%
{\scriptsize \begin{equation}\label{eq:F3}
\bigg(\prod_{k=0}^{d} \varphi_{k+1}(\bfr-k,d-k) \bigg)^{-1} 
\bigg(\prod_{k=d+1}^{\bfr} \mu^{(k-d,0)}\bigg[\varphi_{d+1}(\bfr-d,0)\bigg] \bigg)^{-1}
\bigg(\prod_{k=\bfr+1}^{\bfr+d} \mu^{(k-d,k-\bfr)}\bigg[\varphi_{\bfr+d+1-k}(k-d,k-\bfr)\bigg] \bigg)^{-1}, 
\end{equation} }
\begin{equation}\label{eq:F4}
\bigg(\prod_{k=\bfr+1-d}^{\bfr+1}\varphi_{k+d-\bfr}(\bfr+1-k,\bfr+\bfs+1-k-d) \bigg)
\bigg(\prod_{k=\bfr+2}^{\bfr+\bfs+1-d} \mu^{(0,k-\bfr-1)}\bigg[\varphi_{d+1}(0,\bfs-d)\bigg] \bigg)\times
\end{equation}
\begin{equation}\label{eq:F5}
\hspace{7em}\bigg(\prod_{k=\bfr+\bfs+2-d}^{\bfr+\bfs+1} \mu^{(k+d-\bfr-\bfs-1,k-\bfr-1)}\bigg[\varphi_{\bfr+\bfs+2-k}(k+d-\bfr-\bfs-1,k-\bfr-1)\bigg] \bigg).
\end{equation}
\begin{eg}\label{eg:fileFive}

Rearranging the leftover terms at the end of Example~\ref{eg:fileTable} to match
Equations~\eqref{eq:F1}--\eqref{eq:F5} results in:    
\[\varphi_0(4,2) \varphi_0(3,1)  \cdot  \mu^{(4,2)}\varphi_0(4,2) \mu^{(3,1)} \varphi_0(3,1) \cdot
 \varphi_0(2,0) \mu^{(1,0)} \varphi_0(2,0) \mu^{(2,0)} \varphi_0(2,0) \qquad \; (\ref{eq:F1})
\]
\[
  \varphi_0(1,2)^{-1} \varphi_0(2,3)^{-1} \cdot   
  \mu^{(1,2)} \varphi_0(1,2)^{-1} \mu^{(2,3)} \varphi_0(2,3)^{-1}  \cdot
 \varphi_0(0,1)^{-1} \mu^{(0,1)} \varphi_0(0,1)^{-1} \qquad (\ref{eq:F2})
\]
{\scriptsize \[
 \varphi_1(4,2)^{-1}  \varphi_2(3,1)^{-1} \varphi_3(2,0)^{-1} \cdot \mu^{(1,0)}
 \varphi_3(2,0)^{-1}  \mu^{(2,0)} \varphi_3(2,0)^{-1} \cdot \mu^{(3,1)} \varphi_2(3,1)^{-1}
 \mu^{(4,2)} \varphi_1(4,2)^{-1}  \qquad \quad\ (\ref{eq:F3})
\] }
\vspace{-0.75em}
\[
\varphi_1(2,3) \varphi_2(1,2) \varphi_3(0,1) \cdot \mu^{(0,1)} \varphi_3(0,1) \cdot  \mu^{(1,2)} \varphi_2(1,2) 
\mu^{(2,3)} \varphi_1(2,3). \qquad \qquad \qquad \qquad (\text{\ref{eq:F4}--\ref{eq:F5}})
\]

\end{eg}

Here the first line (\ref{eq:F1}) comes from the numerators for the $k=0$
case, as $c=0,1,\dots, d-1$, followed by the case where $k= d + \bfr - 2c$ using the
same range for $c$.  The third continued product of \eqref{eq:F1} corresponds to the $c=d$ case
while $k$ ranges over $d-c=0,1,\dots, \bfr-d = \bfr-c$. 
This captures all
\emph{numerators} of the form $\mu^{(*,*)}\varphi_{0}(*,*)$.

The second line (\ref{eq:F2}) starts with two products corresponding to the denominators in
the $k=d + \bfr+1-2c$ and the $k=\bfr + \bfs + 1$ cases, as $c=1,2,\dots,
d$.  The third continued product of \eqref{eq:F2} corresponds to the denominator in the $c=0$ case
as $k$ ranges from 
$\bfr+d+1-c = \bfr+d+1, \bfr+d+2,\dots, \bfr+\bfs+1 = \bfr+\bfs+1 - c$ (letting $j=k-\bfr-d-1$).
This captures all
\emph{denominators} of the form $\mu^{(*,*)}\varphi_{0}(*,*)$.

The third line (\ref{eq:F3}) comes from the $c=0$ case as $k=0,1,\dots, \bfr+d$, 
capturing all
\emph{denominators} of the form $\mu^{(*,*)}\varphi_{\ell }(*,*)$, for $\ell > 0$, leftover after the cancellation. 
The fourth and fifth lines (\ref{eq:F4}-\ref{eq:F5}) come from the $c=d$ case as
$k=\bfr+1-d, \bfr+2-d, \dots, \bfr+\bfs+1$, 
capturing all \emph{numerators} of the form $\mu^{(*,*)}\varphi_{\ell }(*,*)$, for $\ell > 0$, leftover after the cancellation.

We will show that the product over these five lines of contributions collapse to the value
of $1$.  We begin with Equation~\eqref{eq:F3}: its value is
identically equal to $1$ because of the families of NILPs that are involved in these
products.  In particular, the lattice path formula for $\varphi_{k+1}(\bfr-k,d-k)$
involves the points 
\[
\{s_1,s_2,\dots, s_{k+1}\} = \{(\bfr,d-k), (\bfr-1,d-k+1),\dots , (\bfr-k, d)\}
\]
and  
\[
\{t_1,t_2,\dots, t_{k+1}\} = \{(\bfr,s-k), (\bfr-1,s-k+1),\dots ,
(\bfr-k, s)\}. 
\]
Hence $\varphi_{k+1}(\bfr-k,d-k)$, as $k=0,1,\dots, d$,
corresponds to a single $(k+1)$-family of NILPs covering all elements of the rank-selected
poset $\lpreg_{(r-k,d-k)}^{k+1}$ with no elements in the complement.  Replacing $k$ with the
value $d$ or $(\bfr+d-k)$, respectively, yields analogous NILPs and we also obtain  
$\mu^{(k-d,0)}\bigg[\varphi_{d+1}(\bfr-d,0)\bigg] =
\mu^{(k-d,k-\bfr)}\bigg[\varphi_{\bfr+d+1-k}(k-d,k-\bfr)\bigg]  = 1$ for
$k=d+1,d+2,\dots, \bfr$ and $k=\bfr+1,\bfr+2,\dots, d+\bfr$,
respectively. 

Analogously, Equations~\eqref{eq:F4}--\eqref{eq:F5} are identically equal
to $1$ by the same argument, but after applying the antipodal map sending $(\bfr-k,
d-k)$ to $(k, \bfs-d+k)$ and then replacing $k$ with $(r+1-k)$.  

To finish the proof, it suffices to verify that Equations~\eqref{eq:F1} and \eqref{eq:F2} 
cancel each other out.  
Key to this is the simple form of $\varphi_{0}$ as shown in
\eqref{eq:phi0}.

We begin by noting that $\ds \prod_{c=0}^{d-1} \varphi_0(\bfr-c,d-c)$ simplifies to the product
$\ds \prod_{(i,j)\in \bigvee_{(\bfr-d+1,1)}} A_{i,j}^{\min(i-\bfr+d,j)}.$
(See the entries highlighted in \cblu{blueish} tones on the top of the left-hand side of Example \ref{eg:trip-prods}.)

Similarly, after applying the antipodal map and letting $\bigwedge_{(i,j)}:
=\{(a,b)\in P: (a,b)\leq (i,j)\}$ denote the \emph{principal order ideal} based at $(i,j)$, we obtain 
$\ds \bigg(\prod_{c=0}^{d-1} \mu^{(\bfr-c,d-c)}\bigg[\varphi_{0}(\bfr-c,d-c)\bigg] \bigg) 
= \prod_{(i,j)\in \bigwedge_{(d-1,\bfs-1)}} A_{i,j}^{\min(d-i,\bfs-j)}$
(highlighted in \cgrn{greenish} tones on the bottom of the left-hand side of Example \ref{eg:trip-prods}.).

Lastly, $\ds \bigg(\prod_{k=0}^{\bfr-d}
\mu^{(k,0)}\bigg[\varphi_0(\bfr-d,0)\bigg] \bigg) = \prod_{i=0}^{\bfr}
\prod_{j=0}^{\bfs} A_{i,j}^{\min(i+1,\bfr+1-i,d+1)}$ (as highlighted in
\cred{redish} tones on the left-hand side of Example \ref{eg:trip-prods}).  

Multiplying these three contributions together, Equation~\eqref{eq:F1} equals 
$$ 
\bigg(\prod_{(i,j)\in \bigvee_{(\bfr-d+1,1)}} A_{i,j}^{\min(i-\bfr+d,j)}\bigg)
\bigg(\prod_{(i,j)\in \bigwedge_{(d-1,\bfs-1)}} A_{i,j}^{\min(d-i,\bfs-j)}\bigg)
\bigg(\prod_{i=0}^{\bfr} \prod_{j=0}^{\bfs} A_{i,j}^{\min(i+1,\bfr+1-i,d+1)}\bigg)$$
$\ds = \prod_{i=0}^{\bfr} \prod_{j=0}^{\bfs} A_{i,j}^{\min(\bfr+1-i+j,\bfs+1+i-j,d+1)}$
where these exponents depend only on the file of the associated element, and behave palindromically about the center of the poset.

By a similar analysis, Equation~\eqref{eq:F2} 
equals $\ds \bigg(\prod_{i=0}^{\bfr} \prod_{j=0}^{\bfs} A_{i,j}^{\min(\bfr+1-i+j,\bfs+1+i-j,d+1)}\bigg)^{-1}$, 
noting that the product is built up by 
negatively sloping contributions, instead of 
positively sloping
ones, in this
case (as highlighted by the color-coding on the right-hand side of Example \ref{eg:trip-prods}).

The argument above finishes the proof of Corollary \ref{thm:homomesy} in
the first case where the top element of the file is $(\bfr,d)$ and $d< s \leq
r$.   
The second case, again with $d < s \leq r$ but where the top element of the file is instead $(d,\bfs),$ 
follows from the first case by the symmetry that replaces
$(i,j)$ with $(j,i)$. 

Finally, the third case, again with the top element of the file $(d, \bfs)$
but where $s\leq d \leq r$, follows analogously 
by using a different pattern for the values obtained by iterating birational rowmotion: 

$$\frac{ \varphi_{k}(d-c-k,\bfs-c-k)}{\varphi_{k+1}(d-c-k,\bfs-c-k)}
\mathrm{~~~for~~~} 0 \leq k \leq \bfs-c,$$ 
$$\mu^{(k+c-\bfs,0)}\bigg[\frac{ \varphi_{\bfs-c}(d-\bfs,0)}{\varphi_{\bfs-c+1}(d-\bfs,0)}\bigg] \mathrm{~~~for~~~} \bfs-c \leq k \leq d-c,$$
$$\mu^{(k+c-\bfs,k+c-d)}\bigg[\frac{ \varphi_{\bfs+d-k-2c}(k+c-\bfs,k+c-d)}{\varphi_{\bfs+d-k-2c+1}(k+c-\bfs,k+c-d)}\bigg] \mathrm{~~~for~~~} d-c \leq k \leq d+\bfs-2c.$$
$$\frac{ \varphi_{k - d - \bfs + 2c}(\bfr+\bfs+1-c-k,d+\bfs+1-c-k)}{\varphi_{k - d - \bfs + 2c - 1}(\bfr+\bfs+1-c-k,d+\bfs+1-c-k)} \mathrm{~~~for~~~} d+\bfs+1-2c \leq k \leq d+\bfs+1-c,$$
$$\mu^{(k - \bfs - 1 + c - d,0)}\bigg[\frac{ \varphi_{c+1}(\bfr-d,0)}{\varphi_{c}(\bfr-d,0)}\bigg] \mathrm{~~~for~~~} d+\bfs+1-c \leq k \leq \bfr+\bfs+1-c,$$
{\scriptsize $$\mu^{(k + c - \bfs - 1 - d,k +c - \bfr - \bfs - 1)}\bigg[\frac{ \varphi_{\bfr + \bfs + 2 - k}(k + c - \bfs - 1 - d,k + c - \bfr - \bfs - 1)}{\varphi_{\bfr + \bfs + 1 - k}(k+c - \bfs - 1 - d,k + c - \bfr - \bfs - 1)}\bigg] \mathrm{~~~for~~~} \bfr+\bfs+1-c \leq k \leq \bfr+\bfs+1.$$}
This third case
includes the possibility of the middle-most file if $\bfr+\bfs = 2d$ where the antipodal map sends elements from the top half of the file to the bottom half of the same file.  Either way, the same analysis utilizing cancellations of numerators and denominators applies. 
\end{proof}

\begin{eg}\label{eg:trip-prods}

Let $(r,s) = (4,3)$, and $d=2$, with corresponding file $F = \{(4,2), (3,1), (2,0)
\}$.  The left-hand side of Figure~\ref{fig:dc} shows the contributions corresponding to
Equation~\eqref{eq:F1},  while the right-hand side shows the contributions corresponding to the reciprocal of Equation~\eqref{eq:F2}.  
For example, in the left picture, the 6-element order filter at $(3,1)$ represents $\varphi_{0}(3,1)$,
while the 6-element interval $[(0,0), (1,2)]$ represents $\mu^{(3,1)}\varphi_{0}(3,1)$.
Either way, 
the full product can also be expressed as a product built up file-by-file as  $$(A_{40})^1
(A_{41}A_{30})^2 (A_{42}A_{31}A_{20}\cdot A_{40}A_{32}A_{21}A_{10}\cdot A_{33}
A_{22}A_{11}A_{00}\cdot A_{23}A_{12}A_{01})^3 (A_{13}A_{02})^2(A_{03})^1.$$

\begin{figure}
\caption{Illustrating the double-counting argument as in Example \ref{eg:trip-prods}.}
\label{fig:dc}

\medskip 

\includegraphics{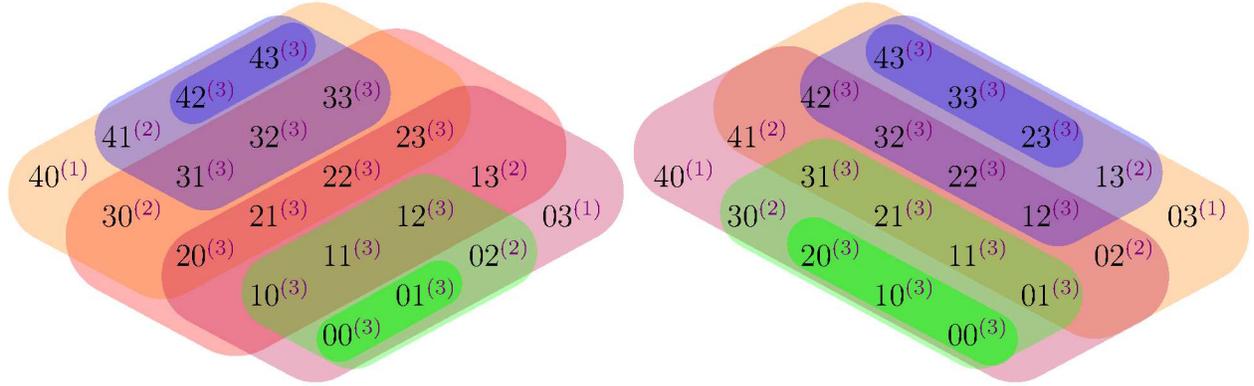}

\end{figure}

\end{eg}

\section{Connections to other works and future directions}\label{sec:connect}

We have noticed the following connections to our work and some open problems for further
exploration.

We went through a large number of candidate bijections, none of which worked, before finding
the colorful combinatorial bijections (in the proofs of
Lemmas~\ref{lem:bounce} and \ref{lem:genBounce}) at the heart of our proof of the main result.
Afterwards we found that such ``bounce-path'' bijections are already in the literature,
particularly in the work of Fulmek and Kleber~\cite{FuKl01}, which uses precisely this kind of argument
(called therein ``changing trails'') to prove Schur function identities, Dodgson's
condensation, and (unsurprisingly) Pl\"ucker relations. (This work in turn was
preceded by Goulden's earlier work \cite{Gou88} using ``$(w,z)$-alternating walks'' to
bijectively prove quadratic identites for skew-symmetric Schur functions.)   In a later
paper~\cite{Fu09}, Fulmek gives a 
number of examples to show the wide applicability of this method.   
In particular, while a Gessel-Viennot type argument would allow us to turn $\varphi_k(i,j)$ into a determinantal expression, given the specific form of identity (\ref{bounce-identity}), we did 
not see an algebraic way to prove Lemma \ref{lem:bounce} as a direct application of Dodgson condensation or the Desnanot-Jacobi identity, and instead finding the combinatorial bijection as above.

In providing a bijective proof that birational RSK satisfies the octahedron recurrence,
Farber, Hopkins, and Trungsiriwat, define a similar-looking bijection in their context of
``interlacing networks''.  They discuss the relationship between the ``changing trails'' of
\cite{FuKl01} and their ``$\tau$-involution''~\cite[p.~366]{FHT15}, pointing out that there
are significant differences as well.  It would be interesting
to gain a clearer understanding of the relationship between birational rowmotion and
birational RSK.  

Galashin and Pylyavskyy have introduced a broad generalization of birational rowmotion, called
``$R$-systems'', which are discrete dynamical systems on labelings of a 
of strongly connected directed graph.  
Given a finite poset $P$, construct a digraph $\Gamma$ by 
(a) turning each covering relation of 
$x\iscovered y$ of $\widehat{P}$ into a directed edge $y \to x$ in $\Gamma$, (b) identifying the
elements $\widehat{0}$ and $\widehat{1}$ as a single vertex $s$.  (See Remark 2.7 of
\cite{GaPy17} although their convention is the opposite of ours.)  
Since the values of an $R$-system are defined projectively, we fix the value at the vertex
$s$ to be 1 to recover our Definition~~\ref{def:bitoggle} applied to the dual
of $P$.  It is an interesting question to understand
how their formula in terms of arborescences~\cite[\S 2]{GaPy17} in the special case of a rectangular poset
compares to the $k=1$ case of our formula in Theorem~\ref{thm:main}.

Information about the relationship between birational rowmotion and the $Y$-systems of
Zamolodchikov periodicity can be found in \cite[\S 4.4]{Rob16}, the introduction of
\cite{GaPy17}, and in \cite{Volk06}.  Unpublished work of Glick and Grinberg shows that birational
rowmotion formulae are ratios of $T$-variables, while $Y$-variables are ratios of the
birational rowmotion formulae.   See \cite{Speyer}, \cite{Henriques}, or \cite{DiFK} for combinatorial formulas for the 
$T$-variables for the $A_m \times A_n$ case, also known as solutions to the octahedron recurrence.

Goncharov and Shen discuss a detropicalization of the Sch\"utzenberger involution of
Gelfand-Tsetlin patterns~\cite[\S 9.3]{GoSh16}.  They express this map $R_{a,b,c}$ as a
ratio of determinants that transforms by the same recurrence as birational rowmotion on a
rectangle. (See Equations (266) and (269).)  Another question for future research
is the relationship between our formula for iterated birational rowmotion and the role of
$R_{a,b,c}$ in~\cite{GoSh16}.  Related work also appears in work of Frieden~\cite[\S
4]{Fr17}, which has
analogous formulas for detropicalized promotion. These also can be related to
Gelfand-Tsetlin patterns, written as a ratio of determinants, and interpreted
in terms of planar networks. 

A natural open question is to find similar formulae in terms of NILPs for other situations
where rowmotion or birational rowmotion has nice periodicity.  These include several
triangular shapes obtained by cutting $[0,r]\times [0,x]$ in half vertically or
horizontally, or both~\cite[\S 9--11]{GrRo15}, as well as other types of root and minuscule
posets~\cite[\S 13]{GrRo15}.  Perhaps these would also allow one to prove birational
homomesies for these posets, analogous to Corollary~\ref{cor:reciprocity} and
Theorem~\ref{thm:homomesy}.

\end{document}